\definecolor{colormy}{rgb}{0.8,0.05,0.05}
\definecolor{mycolor}{rgb}{0.25,0.99,0.25}
\tikzstyle directed=[postaction={decorate,decoration={markings,
    mark=at position #1 with {\arrow{>}}}}]
\tikzstyle rdirected=[postaction={decorate,decoration={markings,
    mark=at position #1 with {\arrow{<}}}}]
\newcommand{\Hom}{\mathrm{Hom}}
\newcommand{\End}{\mathrm{End}}
\newcommand{\ord}{\mathrm{ord}}
\newcommand{\mo}{\mathrm{mod }}
\def\C{{\mathbb C}}
\def\Z{{\mathbb Z}}
\def\pr{\mathrm{pr}}
\def\cha{\mathrm{char}}
\def\Tr{\mathrm{Tr}}
\theoremstyle{definition}
\newtheorem{thm}{Theorem}[section]
\newtheorem{cor}[thm]{Corollary}
\newtheorem{lem}[thm]{Lemma}
\newtheorem{prop}[thm]{Proposition}
\theoremstyle{definition}
\newtheorem{examplecounter}{Example}
\newtheorem{remarkcounter}{Remark}
\numberwithin{equation}{section}
\declaretheorem[style=definition,name=Definition,qed=$\blacktriangle$,numberlike=thm]{defn}
\title{Simple modules for Temperley-Lieb algebras and related algebras}
\author{Henning Haahr Andersen}
\email{h.haahr.andersen@gmail.com}
\date{}							
\begin{document}

\begin{abstract}
Let $k$ be an arbitrary field and let $q \in k\setminus\{0\}$. In this paper we use the known tilting theory for the quantum group $U_q(sl_2)$ to obtain the dimensions of simple modules for the Temperley-Lieb algebras $TL_n(q+q^{-1})$ and related algebras over $k$. Our main result is an algorithm which calculates the dimensions of simple modules for these algebras. We take advantage of the fact that $TL_n(q+q^{-1})$ is isomorphic to the endomorphism ring of the $n$'th tensor power of the natural $2$-dimensional module for the quantum group for $sl_2$. This algorithm is easy when the characteristic is $0$ and more involved in positive characteristic. We point out that our results for the Temperley-Lieb algebras contain a complete description of the simple modules for the Jones quotient algebras. Moreover, we illustrate how the same results lead to corresponding information about simple modules for the BMW-algebras and other algebras closely related with endomorphism algebras of families of tilting modules for $U_q(sl_2)$.
\end{abstract}

\maketitle

\section{Introduction}

Let $k$ be a field of characteristic $p \geq 0$ and $q \in k\setminus \{0\}$. Consider the Temperley-Lieb algebra $TL_n(q + q^{-1})$ on $n$ strands. If $q$ is not a root of unity or if $q = \pm 1$ and $p = 0$ then $TL_n(q + q^{-1})$ is semisimple and its representation theory is well understood, see e.g. \cite{M}, \cite{W}, \cite{AST1}. Also the case where $q$ is a root of unity and $p = 0$ (where $TL_n(q + q^{-1})$ is non-semisimple for $n \geq \ord(q^2)$) has been dealt with, see \cite{GW}, \cite{M}, \cite{ILZ}. So in this note we are mainly interested in the case where $p$ is positive and $q$ is a root of unity. Of special interest is the case $q =1$, i.e. $q + q^{-1} = 2 \in k$. Our first results give algorithms which determine the dimensions of all simple modules for $TL_n(q + q^{-1})$ for all $k$ and all $q$. An immediate consequence is that a tiny and especially easy part of these algorithms produces the dimensions of all simple modules for the Jones algebras.

Nest we turn to other algebras closely related to endomorphism algebras of tensor powers of higher dimensional tilting modules. This includes in particular the BMW-algebras (again over any field). We demonstrate how the decomposition of the tensor powers of the natural $2$-dimensional module into indecomposable summands also can be used in these cases to obtain dimensions of simple modules for such algebras.

It is wellknown that $TL_n(q + q^{-1})$ as the endomorphism ring for the $n$'th tensor power of the natural $2$-dimensional module $V_q$ for the quantum algebra $U_q(sl_2)$. This algebra is cellular and the standard cellular theory, \cite{GL},  \cite{AST1}  reveals that to determine the simple modules of $TL_n(q + q^{-1})$ is equivalent to determine the decomposition of $V_q^{\otimes n}$  into indecomposable summands. These summands are tilting modules and we take advantage of the explicitly known characters of indecomposable tilting modules for $U_q(sl_2)$ (see e.g. \cite{AT} for the characteristic $0$ case and \cite{E} for the $q=1$ case in prime characteristic) to deduce our algorithms.

The described method is easy to work with in characteristic zero and we shall start out by dealing with this case. Here we recover some of the results recently obtained by Iohara, Lehrer and Zhang \cite{ILZ}. Our way of attacking the problem (exploring the $sl_2$-side of the theory) is ''dual" to theirs. In prime characteristic our method works in much the same way although the algorithm becomes more elaborate.

When $q$ is a root of unity the Temperley-Lieb algebras $TL_n(q + q^{-1})$  have special semisimple quotients $Q_n(q + q^{-1})$ known as the Jones algebras. These algebras may be realized as the endomorphism rings of the images of $V_q^{\otimes n}$ in the socalled fusion category for $U_q(sl_2)$, \cite{A92}, \cite{AS}.  We use this to point out that the dimensions of the simple modules for $Q_n(q + q^{-1})$ are recovered as an especially easy part of our algorithm for the $TL_n(q + q^{-1})$ case. In characteristic $0$  this result was again obtained by Iohara, Lehrer and Zhang \cite{ILZ}. Our result reveals that the answers for $Q_n(q + q^{-1})$ in characteristic $p>0$ are in fact exactly the same.

Consider now instead of $V_q$ another tilting module $T_q$ for $U_q(sl_2)$ and consider the family of endomorphism rings $\End_{U_q(sl_2)}(T_q^{\otimes n})$. The dimensions of the simple modules for these cellular algebras (\cite{AST1}) are as in the case of $V_q$ determined by the decomposition of $T_q^{\otimes n}$ into indecomposable summands. This decomposition can (for instance) be done by formally writing (in the Grothendieck group) $T_q^{\otimes n}$ as a $\Z$-linear combination of $V_q^{\otimes r}$ and then using the decomposition we already have of these latter modules into indecomposables. We obtain as a special case of this the dimensions of simple modules for the $BMW$-algebras.

\section{General $sl_2$-theory}

Let first $q \in k\setminus \{0\}$ be arbitrary and denote by $U_q = U_q(sl_2)$ the quantum group for $sl_2$. To be precise by this we mean the Lusztig version of the quantized enveloping algebra constructed from the generic quantum group via $q$-divided powers by specializing the quantum parameter to $q$. In this section we recall some standard facts from the representation theory for $U_q$. For details see e.g. \cite{AT}.

\subsection{Weyl modules and Weyl filtrations}
The Weyl modules for $U_q$ are denoted $\Delta_q(m), \; m \in \Z_{\geq 0}$. So  $\Delta_q(m)$ has dimension $m+1$. In particular, $\Delta_q(0)$ is the trivial module $k$,  and $V_q =  \Delta_q(1)$ is the natural $2$-dimensional representation of $U_q$. We set $\Delta_q(m) = 0$ if $m < 0$. 

We have for all $m \geq 0$ a short exact sequence 
\begin{equation} \label{basic}
 0 \rightarrow  \Delta_q(m-1)      \rightarrow \Delta_q(m) \otimes V_q \rightarrow \Delta_q(m+1) \rightarrow 0. 
\end{equation}

Recall that a module $M$ is said to have a Weyl filtration if it contains submodules $0=F_0 \subset F_1 \subset \cdots \subset F_r = M$ with $F_i/F_{i-1} \simeq \Delta_q(m_i)$ for some $m_i$'s. We denote then by $(M: \Delta_q(m))$ the multiplicity of $\Delta_q(m)$ as a subquotient in such a filtration. By (\ref{basic})
 we see that $\Delta_q(m) \otimes V$ has a Weyl filtration and that $(\Delta_q(m) \otimes V_q : \Delta_q(r)) = 1$ if $r =m  \pm 1$ and $0$ for all other values of $r$. This implies that  $V_q^{\otimes n}$ has a Weyl filtration for all $n$ and an easy induction gives the wellknown formula, see e.g. \cite{ILZ}.

\begin{equation} \label{Weyl mult}
(V_q^{\otimes n} : \Delta_q(m)) = \binom{n}{r} - \binom{n}{r-1} 
\end{equation}
where $r = (n-m)/2$ and it is understood that $\binom{n}{ r} = 0$ unless $r \in \Z_{\geq 0}$.

\subsection{Tilting modules}

A module $M$ for $U_q$ is called tilting if both $M$ and its dual $M^*$ have Weyl filtrations. As $V_q$ is selfdual we see from (\refeq{basic}) that $V_q^{\otimes n}$ is tilting for all $n$.

For each $m \in \Z_{\geq 0}$ there is a unique indecomposable tilting module $T_q(m) $ with the property that $(T_q(m) : \Delta_q(m)) = 1$ and $(T_q(m) : \Delta_q(t)) = 0$ unless $t \leq m$ and $t \equiv m \; (\mo \; 2)$. Moreover, up to isomorphisms all indecomposable tilting modules are accounted for in this way. Hence if $M $ is a tilting module we have $M = \bigoplus _m T_q(M) ^{(M:T_q(m))}$ for some unique non-negative integers $(M : T_q(m))$. Our aim is to determine the tilting multiplicities in $V_q^{\otimes n}$, i.e. the numbers $(V_q^{\otimes n}:T_q(m))$.

\subsection{The $3$ different cases}

If $q$ is not a root of unity in $k$ then the category of finite dimensional $U_q$-modules is semisimple, see \cite{APW}. This means in particular, that $T_q(m) = \Delta_q(m)$ for all $m$ and that these modules are also the simple modules for $U_q$.  Hence in this case all modules $M$ are tilting and we have
\begin{equation}
(M:T_q(m)) = (M : \Delta_q(m)) \text { for all } m.
\end{equation}
These numbers are the composition factor multiplicities for $M$.
This case is therefore identical to the classical theory for finite dimensional representations of the complex Lie algebra $sl_2(\C)$. 

In this paper we are concerned with the non-semisimple cases. We shall divide our treatment into the following  $3$ cases

1. The characteristic zero root of unity case (Section 3). This case was the main focus of \cite{AT}. See also \cite{ILZ}.

2. The case where $q = 1$ and $k$ has positive characteristic (Section 4). In this case the representation theory for $U_q$ is identical to the modular representation theory for the algebraic group $SL_2$. This case was dealt with in \cite{E}.

3. The positive characteristic root of unity case (Section 5). This is also sometimes called the mixed case (\cite{AW}). See also \cite{ALZ}.

\section{The characteristic zero root of unity case}

In this section we assume that $p = 0$ and that $q$ is a root of unity. We denote by $\ell$ the order of $q^2$. 

\subsection{Weyl factors of indecomposable tilting modules}

We have the following description of $T_q(m)$, see \cite{AT}.

\begin{prop}\label{tiltings}
\begin{enumerate}
\item  If $m < \ell$ then $T_q(m) \simeq \Delta_q(m)$.
\item  If $m \equiv -1 \; (\mo \; \ell)$ then $T_q(m) \simeq \Delta_q(m)$.
\item  If $m = m_1 \ell + m_0$ with $0 \leq m_0 \leq \ell -2$ and $m_1 > 0$ then we have an exact sequence
$$ 0 \to \Delta_q(m) \to T_q(m)     \to \Delta_q(m') \to 0$$
where $ m' = m - 2 m_0 -2$.
\end{enumerate}
\end{prop}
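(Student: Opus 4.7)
The plan is to derive (1) and (2) from the known composition structure of Weyl modules in the characteristic-zero root-of-unity setting, and to prove (3) by induction on $m$ using the tensor recurrence (\ref{basic}).

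For (1) and (2), I would appeal to the Steinberg tensor product theorem for $U_q$ at a root of unity, which gives $\dim L_q(m) = \dim L_q(m_0) \cdot (m_1 + 1)$ for $m = m_0 + \ell m_1$ with $0 \leq m_0 < \ell$, to show that $\Delta_q(m) = L_q(m)$ is simple in exactly these two cases. For $m < \ell$, i.e. $m_1 = 0$, both sides have dimension $m + 1$ (the latter by direct calculation, e.g. via the Jantzen sum formula). For $m = m_1\ell - 1 = (\ell - 1) + \ell(m_1 - 1)$, part (1) applied to $\ell - 1$ gives $\dim L_q(\ell - 1) = \ell$, hence $\dim L_q(m) = \ell \cdot m_1 = m + 1 = \dim \Delta_q(m)$. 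A simple module is self-dual, so it has trivially both a Weyl and a good filtration; it is therefore tilting, and by uniqueness of the indecomposable tilting of highest weight $m$ we conclude $T_q(m) \cong \Delta_q(m)$.

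For (3), write $m = m_1\ell + m_0$ with $m_1 \geq 1$ and $0 \leq m_0 \leq \ell - 2$, and put $m' = m - 2m_0 - 2$. I would use strong induction on $m$. In the base case $m_0 = 0$, the weight $m - 1 = m_1\ell - 1$ lies in case (2), so $T_q(m-1) = \Delta_q(m-1)$ is already tilting and $\Delta_q(m-1) \otimes V_q$ is a tilting module of highest weight $m$ fitting into
$$0 \to \Delta_q(m-2) \to \Delta_q(m-1) \otimes V_q \to \Delta_q(m) \to 0$$
by (\ref{basic}), with $m - 2 = m'$. Hence $T_q(m)$ is a direct summand; any nonzero complement would force $T_q(m) \cong \Delta_q(m)$, but $\Delta_q(m)$ is non-simple (having $L_q(m')$ as a proper composition factor by the Weyl-module structure recalled above), hence not self-dual and not tilting---a contradiction. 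For the inductive step $m_0 \geq 1$, the weight $m - 1 = m_1\ell + (m_0 - 1)$ again lies in case (3) with smaller parameter, and by induction $T_q(m - 1)$ fits into $0 \to \Delta_q(m-1) \to T_q(m-1) \to \Delta_q(m' + 1) \to 0$. Tensoring with $V_q$ and applying (\ref{basic}) to each Weyl factor, $T_q(m-1) \otimes V_q$ is tilting with Weyl subquotients $\Delta_q(m-2), \Delta_q(m), \Delta_q(m'), \Delta_q(m' + 2)$. I would identify the decomposition into indecomposable tilting summands as $T_q(m) \oplus T_q(m-2)$ when $m_0 \geq 2$ (using the induction hypothesis to see that $T_q(m-2)$ has Weyl factors exactly $\Delta_q(m-2)$ and $\Delta_q(m' + 2)$), and as $T_q(m) \oplus T_q(m-2)^{\oplus 2}$ when $m_0 = 1$ (in which case $m - 2 = m_1\ell - 1$ lies in case (2), $T_q(m-2) = \Delta_q(m_1\ell - 1)$, and the two remaining Weyl factors both coincide with $\Delta_q(m_1\ell - 1)$). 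In either case the $T_q(m)$ summand has the Weyl filtration claimed.

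The main obstacle is justifying the precise splitting above: equivalently, establishing that $T_q(m)$ has only the two Weyl factors $\Delta_q(m)$ and $\Delta_q(m')$ and no more, even though further weights appear in the strong-linkage orbit of $m$. For $\mathfrak{sl}_2$ in the characteristic-zero root-of-unity setting this amounts to the triviality of the inverse Kazhdan-Lusztig polynomials for the affine Weyl group of type $\widetilde{A}_1$; inside the induction it can be verified by a character computation, matching $\operatorname{ch} T_q(m-1) \cdot \operatorname{ch} V_q$ against the claimed decomposition and invoking the uniqueness of the Krull-Schmidt decomposition of a tilting module into indecomposable tilting summands.
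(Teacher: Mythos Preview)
The paper does not prove this proposition; it simply cites \cite{AT}. So there is no in-paper argument to compare against, and I assess your attempt on its own merits. Your treatment of (1), (2), and the base case $m_0=0$ of (3) is sound.

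The genuine gap is in the inductive step for $m_0\ge 1$. Your proposed fix, ``matching $\operatorname{ch} T_q(m-1)\cdot\operatorname{ch} V_q$ against the claimed decomposition and invoking Krull--Schmidt'', is circular: the claimed decomposition contains $T_q(m)$, whose Weyl-factor structure is precisely the unknown. Concretely, take $m=\ell+1$. Then $T_q(\ell)\otimes V_q$ has Weyl factors $\Delta_q(\ell+1),\Delta_q(\ell-1),\Delta_q(\ell-1),\Delta_q(\ell-3)$, and at the character level \emph{both}
\[
T_q(\ell+1)\oplus T_q(\ell-1)^{\oplus 2}\qquad\text{and}\qquad T_q(\ell+1)\oplus T_q(\ell-1)\oplus T_q(\ell-3)
\]
are compatible with the induction hypothesis, the first with $T_q(\ell+1)$ having Weyl factors $\{\ell+1,\ell-3\}$, the second with $\{\ell+1,\ell-1\}$. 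Characters and Krull--Schmidt alone cannot decide between them. The missing ingredient is the linkage principle: for every $m_0\ge 1$ one checks that neither $m-2$ nor $m'+2$ lies in the affine Weyl group orbit of $m$, so $T_q(m)$ cannot have $\Delta_q(m-2)$ or $\Delta_q(m'+2)$ as a Weyl factor. With linkage in hand, the only Weyl factors of $T_q(m-1)\otimes V_q$ that can sit inside the summand $T_q(m)$ are $\Delta_q(m)$ and $\Delta_q(m')$; your argument that $\Delta_q(m)$ alone is not tilting then forces exactly the two-term filtration. (Your stated worry about ``further weights in the strong-linkage orbit'', e.g.\ $m-2\ell$, is harmless here because such weights never appear in $T_q(m-1)\otimes V_q$; it is $m-2$ and $m'+2$, which are \emph{not} linked to $m$, that must be excluded.)
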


This makes it possible to express the Weyl modules in terms of the indecomposable tilting modules. In fact, let $\mathcal K$ denote the Grothendieck group of the category of finite dimensional $U_q$-modules. Then we write $[M]$ for the class in $\mathcal K$ of a $U_q$-module $M$. Both the set of Weyl module classes $[\Delta_q(m)])$ and the set of classes of indecomposable tilting modules $[T_q(m)])$ constitute bases of the free $\Z$-module $\mathcal K$. By Proposition \ref{tiltings}  the transition between these bases is given by 

\begin{cor}\label{tiltings in K}
\begin{enumerate}
\item  If $m$ satisfies one of the conditions in Proposition \ref{tiltings}(1) and (2) then $[T_q(m)] = [\Delta_q(m)]$.
\item  If $m$ is as in Proposition \ref{tiltings}(3) then \\
$[T_q(m)] = [\Delta_q(m)] + [\Delta_q(m')]$  and  $[\Delta_q(m)] = \sum_{j \geq 0}  [T_q(m-2j\ell)] - \sum
_{j \geq 0}  [T_q(m'-2j\ell)].$
\end{enumerate}
\end{cor}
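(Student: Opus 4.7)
The first claim of part (1) and the first equality of part (2) are immediate: applying the Grothendieck-group functor to the isomorphisms, respectively the short exact sequence, from Proposition \ref{tiltings} converts them into the stated identities in $\K$. The real content is therefore the inversion formula, which I plan to prove by induction on $m_1$, the quotient in $m = m_1\ell + m_0$ with $0 \leq m_0 \leq \ell - 2$. Throughout one uses the convention that $[T_q(n)] = 0$ for $n < 0$, which makes both sums finite.

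The first preparatory step is to unwind $m'$ in $\ell$-adic form: a short calculation gives $m' = (m_1 - 1)\ell + (\ell - m_0 - 2)$, so $m'$ has $\ell$-adic quotient $m_1 - 1$ and remainder in $\{0, 1, \dots, \ell - 2\}$. In particular $m'$ never lands in case (2) of Proposition \ref{tiltings}; it falls in case (1) exactly when $m_1 = 1$ and in case (3) when $m_1 \geq 2$. A further quick computation yields the telescoping identity
$$(m')' = m' - 2(\ell - m_0 - 2) - 2 = m - 2\ell,$$
which is the heart of the argument: it matches the $j$-th term of $\sum_j [T_q(m - 2j\ell)]$ with the $(j-1)$-th term of the corresponding sum arising from $m'$.

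For the base case $m_1 = 1$, we have $m' < \ell$, so $[\Delta_q(m')] = [T_q(m')]$ by part (1); combining this with the first equality of part (2) gives $[\Delta_q(m)] = [T_q(m)] - [T_q(m')]$, and both claimed sums collapse to a single term each since $m < 2\ell$. For the inductive step ($m_1 \geq 2$), I would apply the inductive hypothesis to $m'$, then use $(m')' = m - 2\ell$ together with the index shift $j \mapsto j+1$ to rewrite
$$[\Delta_q(m')] = \sum_{j \geq 0} [T_q(m' - 2j\ell)] - \sum_{j \geq 1} [T_q(m - 2j\ell)],$$
and finally substitute into $[\Delta_q(m)] = [T_q(m)] - [\Delta_q(m')]$ to read off the claim.

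The only real obstacle is spotting the identity $(m')' = m - 2\ell$ and verifying that $m'$ cannot fall into case (2) of Proposition \ref{tiltings}; once these are in hand, the induction is entirely mechanical.
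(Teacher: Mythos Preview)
Your argument is correct. The paper itself gives no proof beyond the remark that the corollary records the transition between the two bases coming from Proposition~\ref{tiltings}, so your induction on $m_1$ via the key identity $(m')' = m - 2\ell$ is exactly the verification that the paper leaves implicit.
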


\subsection{Decomposition of  $V_q^{\otimes n}$ into indecomposable summands}

Set $a_{n,m} = (V_q^{\otimes n} : \Delta_q(m))$. Then the matrix $(a_ {n,m})_{n,m \in \Z_{\geq 0}}$ determines the Weyl module multiplicities in all the tensor powers of $V_q$. Let similarly, $b_{n,m} = (V_q^{\otimes n} : T_q(m))$. Then the decomposition of the tensor powers of $V_q$ into indecomposable tilting modules is given by

\begin{cor}\label{tilting mult}
\begin{enumerate}
\item $b_{n,m} = a_{n,m}$ { if } $m \equiv -1 \; (\mo \; \ell)$.
\item $b_{n,m} = \sum_{j \geq 0} a_{n, m+2j\ell} - \sum_{j \geq 1} a_{n, m'+2j\ell}$  { for all other }$ m$.
\end{enumerate}
(in (2) we have used the notation from Proposition \ref{tiltings} (3)).
\end{cor}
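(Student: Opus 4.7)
The plan is to read off $b_{n,m}$ from an equality of two expansions of $[V_q^{\otimes n}]$ in the Grothendieck group $\mathcal K$. Since $V_q^{\otimes n}$ is tilting by Section 2.2, it has a unique decomposition $V_q^{\otimes n} \simeq \bigoplus_m T_q(m)^{b_{n,m}}$, which in $\mathcal K$ reads $[V_q^{\otimes n}] = \sum_m b_{n,m}\,[T_q(m)]$, while its Weyl filtration gives $[V_q^{\otimes n}] = \sum_m a_{n,m}\,[\Delta_q(m)]$. I would use Corollary \ref{tiltings in K} to rewrite each $[\Delta_q(m)]$ in the tilting basis and then compare coefficients of a fixed $[T_q(M)]$.

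Fixing $M \in \Z_{\geq 0}$, the contributing $m$'s split into two types. Those $m$ with $[\Delta_q(m)] = [T_q(m)]$ (i.e.\ $m < \ell$ or $m \equiv -1 \pmod \ell$) contribute only when $m = M$. For $m$ of Proposition \ref{tiltings}(3) type, i.e.\ $m = m_1\ell + m_0$ with $m_1 > 0$ and $0 \leq m_0 \leq \ell - 2$, the expansion in Corollary \ref{tiltings in K}(2) produces $\pm 1$ whenever $m - 2j\ell = M$ or $m' - 2j\ell = M$ for some $j \geq 0$. Using $m' = (m_1 - 1)\ell + (\ell - m_0 - 2)$, these solve uniquely to $m = M + 2j\ell$ (positive sign) and $m = M' + 2(j+1)\ell$ (negative sign, re-indexed as $m = M' + 2j\ell$ for $j \geq 1$), with $M' = M - 2M_0 - 2$ in the notation of Proposition \ref{tiltings}(3).

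A case analysis on $M \bmod \ell$ then finishes the argument. If $M \equiv -1 \pmod \ell$, neither residue $m_0$ nor $\ell - m_0 - 2$ of a type-(3) label lies outside $\{0, \ldots, \ell - 2\}$ and so neither can equal $\ell - 1$; no type-(3) contribution arises, and only the term $m = M$ survives (with $M$ itself of type (2) in Proposition \ref{tiltings}), giving $b_{n,M} = a_{n,M}$ and hence part (1). Otherwise the two families combine with the signs dictated by Corollary \ref{tiltings in K}(2) into $\sum_{j \geq 0} a_{n, M + 2j\ell} - \sum_{j \geq 1} a_{n, M' + 2j\ell}$, which is part (2). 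The main obstacle is the careful bookkeeping of boundary behaviour: ensuring each contributing $m$ is counted exactly once with the correct sign, handling the degenerate regime $M < \ell$ where formally $M' < 0$ but each $M' + 2j\ell$ with $j \geq 1$ is still a legitimate non-negative index, and checking that the $j = 0$ term of the first sum correctly absorbs the contribution from $m = M$ when $M$ is of type (1) in Proposition \ref{tiltings}.
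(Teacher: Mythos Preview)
Your proposal is correct and follows exactly the route the paper intends: the corollary is stated without proof as an immediate consequence of Corollary~\ref{tiltings in K}, and the derivation you give---expanding $[V_q^{\otimes n}]$ in the Weyl basis, substituting the expressions of Corollary~\ref{tiltings in K}(2) for each $[\Delta_q(m)]$, and collecting the coefficient of a fixed $[T_q(M)]$---is precisely the implicit argument. Your bookkeeping of the boundary cases (in particular the absorption of the type-(1) contribution into the $j=0$ term when $M<\ell$, and the re-indexing of the negative family to $j\geq 1$) is accurate.
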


\begin{remarkcounter} \label{warning}
Even though $\Delta_q(m) = T_q(m)$ for certain special values of $m$ it is not necessarily true that for a given tilting module $T$ its Weyl factor multiplicity $(T:\Delta_q(m))$ coincides with the "tilting multiplicity" $(T:T_q(m))$. For instance, the trivial module $k = \Delta_q(0) = T_q(0)$ occurs once as a Weyl factor in $T_q(2\ell-2)$ whereas clearly $(T_q(2\ell - 2): T_q(0)) = 0$. However, if $m \equiv -1 \; (\mo \; \ell)$ then we do have $(T: \Delta_q(m)) = (T:T_q(m))$. 
\end{remarkcounter}

\begin{examplecounter}
In Table  1 we have listed the Weyl module multiplicities in $V_q^{\otimes n}$ for $n=0, 1, \cdots ,16$, i.e. in the $n$'th row we have listed the multiplicities of all Weyl factors in $V_q^{\otimes n}$  (empty spots here and in all other figures mean that the corresponding multiplicities are $0$). This is of course straightforward: we can either use (\refeq{Weyl mult} or we can proceed via induction on $n$ by first observing that $a_{0,j} = \delta_{0,j}$, $a_{i,j} = 0$ for all negative values of $j$,  and then for $i>0$ apply the recurrence relation $a_{i,j} = a_{i-1, j-1} +  a_{i-1, j+1}$. However, it will be convenient to have this table available when computing tilting multiplicities both in the situation of the present section and those coming up. 

\eject
\centerline
{ \it Table  1.  Weyl factor multiplicities in $V^{ \otimes n}$}

\vskip .5cm
\noindent
\begin{tabular}{ r| c  c c c c c c c c c c c c c c c c}
  
   & 0 & 1 & 2 & 3 & 4 & 5 & 6 & 7 & 8 & 9 & 10 & 11 & 12 & 13 & 14 & 15 & 16  \\  \hline 
  0 & 1  &   \\   
  1 &  & 1 \\ 
  2 & 1 & & 1 \\ 
  3 & & 2 &   & 1 \\ 
  4 & 2 & & 3 & & 1 \\
5 & &5 & & 3 & & 1 & \\
6 &5 && 9& &5 & &1  \\
7 & &14 & &14&&6&&1 \\
8 & 14&&28 &&20 &&7 & &1 \\
9 &&42&&48&&27&&8&&1\\
10 &42&&90&&75&&35&&9&&1\\
11&&132&&165&&110&&44&&10&&1\\
12 &132&&297&&275&&154&&54&&11&&1\\
13 &&429&&572&&429&&208&&65&&12&&1\\
14 &429&&1001&&1001&&637&&273&&77&&13&&1\\
15&&1430&&2002&&1638&&910&&350&&90&&14&&1\\
16 &1430&&3432&&3640&&2548&&1260&&440&&104&&15&&1

\end{tabular}
\vskip 1 cm 

Suppose now $\ell = 5$. We can use Table  1 to obtain the tilting multiplicities for the same values on $n$ by applying Corollary \ref{tilting mult}. The  results are listed in Table  2. We have put a vertical line in front of the columns indexed by all $m$ which have residue $-1$ modulo $\ell$. According to Corollary \ref{tilting mult}(1) these columns are identical to the corresponding columns in Table  1.

\vfill \eject

\centerline
{ \it Table  2.  Tilting multiplicities in $V^{ \otimes n}$ for $\ell = 5$}

\vskip .5cm
\noindent
\begin{tabular}{ r| c  c c c |c c c c c| c c c c c |c c c}
  
   & 0 & 1 & 2 & 3 & 4 & 5 & 6 & 7 & 8 & 9 & 10 & 11 & 12 & 13 & 14 & 15 & 16  \\  \hline 
  0 & 1  &&&&&&&&&&&&&&&  \\ 
  1 &  & 1&&&&&&&&&&&&&&& \\ 
  2 & 1 & & 1&&&&&&&&&&&&&& \\ 
  3 & & 2 &   & 1 &&&&&&&&&&&&&\\ 
  4 & 2 & & 3 & & 1&&&&&&&&&& \\
5 & &5 & & 3 & & 1 &&&&&&&&& \\
6 &5 && 8& &5 & &1  &&&&&&&&&\\
7 & &13 & &8&&5&&1&&&&&& \\
8 & 13&&21 &&20 &&7 & &1 &&&&&&\\
9 &&34&&21&&27&&8&&1&&&&&&\\
10 &34&&55&&75&&35&&8&&1&&&&&&\\
11&&89&&55&&110&&43&&10&&1&&&&\\
12 &89&&144&&275&&153&&43&&11&&1&&&\\
13 &&233&&144&&428&&196&&65&&12&&1&&\\
14 &233&&377&&1001&&624&&196&&77&&13&&1&\\
15&&610&&377&&1625&&820&&450&&90&&13&&1&\\
16 &610&&987&&3640&&2445&&820&&440&&103&&15&&1
\end{tabular}
\end{examplecounter}

\vskip 1cm

The following result gives an alternative way of computing tilting multiplicities.

\begin{prop} \label{tilt times V}
Suppose $\ell > 2$. Let $m \in \Z_{\geq 0}$. Then
\begin{enumerate}
\item if $m \equiv -1 \; (\mo \; \ell)$ then $T_q(m) \otimes V_q \simeq T_q(m+1)$,
\item if $m \equiv 0 \; (\mo \; \ell)$ then $T_q(m) \otimes V_q \simeq T_q(m-1)^{\oplus 2} \oplus T_q(m+1)$,
\item if $m \equiv m_0 \; (\mo \; \ell)$ with $0 <m_0 < \ell-2$ then $T_q(m) \otimes V_q \simeq T_q(m-1) \oplus T_q(m+1)$,
\item if $m \equiv \ell - 2 \; (\mo \; \ell)$ then $T_q(m) \otimes V_q \simeq T_q(m+1 - 2\ell) \oplus T_q(m-1) \oplus T_q(m+1) )$.
\end{enumerate}
\end{prop}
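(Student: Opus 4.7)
The overall strategy is to work in the Grothendieck group $\mathcal K$ and exploit the fact that a tilting module is determined up to isomorphism by its class in $\mathcal K$ once one knows it decomposes into indecomposable tiltings. So the plan splits into three steps: check that $T_q(m) \otimes V_q$ is tilting, compute its class as a sum of Weyl classes, and match that class against the one of the right-hand side in each of the four cases.

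\emph{Step 1: Tilting-ness.} Since $T_q(m)$ has a Weyl filtration, iterating the short exact sequence (\ref{basic}) on successive quotients shows $T_q(m) \otimes V_q$ has a Weyl filtration as well. Because $V_q \simeq V_q^*$, the same argument applied to $T_q(m)^*$ shows $(T_q(m) \otimes V_q)^* \simeq T_q(m)^* \otimes V_q$ has a Weyl filtration. Hence $T_q(m) \otimes V_q$ is tilting, so it is a direct sum of modules $T_q(r)$ with uniquely determined multiplicities.

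\emph{Step 2: Class in $\mathcal K$.} Combining Proposition \ref{tiltings} with (\ref{basic}) gives
\[
[T_q(m) \otimes V_q] = [T_q(m)]\,[V_q]
\]
where $[T_q(m)]$ is either $[\Delta_q(m)]$ (cases (1) and (2) of the Proposition) or $[\Delta_q(m)] + [\Delta_q(m')]$ with $m' = m - 2m_0 - 2$ (case (3)), and where $[\Delta_q(j)]\,[V_q] = [\Delta_q(j-1)] + [\Delta_q(j+1)]$. Carrying out this multiplication yields a specific $\Z_{\geq 0}$-combination of Weyl classes which I would record explicitly in each congruence class of $m$ modulo $\ell$.

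\emph{Step 3: Case-by-case matching.} For each of the four residue classes of $m$ modulo $\ell$, I would compute $[T_q(m\pm1)]$ and, where relevant, $[T_q(m+1-2\ell)]$, again using Proposition \ref{tiltings}, and check that the resulting sum equals the class computed in Step 2. For example, in case (1) one has $m+1 \equiv 0 \pmod{\ell}$, so $[T_q(m+1)] = [\Delta_q(m+1)] + [\Delta_q(m-1)]$, which matches $[\Delta_q(m)][V_q]$. In case (4), the residue $\ell-3$ of $m-1$ and the residue $\ell-1$ of $m+1$ account for three of the four Weyl classes in the product, leaving exactly $[\Delta_q(m'-1)] = [\Delta_q(m+1-2\ell)]$, which since $m+1-2\ell \equiv -1\pmod\ell$ equals $[T_q(m+1-2\ell)]$. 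The remaining cases (2) and (3) are analogous. Once the classes agree and both sides are tilting, uniqueness of the decomposition into indecomposable tiltings gives the asserted isomorphism.

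\emph{Expected obstacle.} The genuinely technical part is the bookkeeping in the case analysis, especially at the boundary of case (3) when $m_0 \in \{1,\ell-3\}$, where one must check that $T_q(m\pm1)$ still falls into case (3) of Proposition \ref{tiltings} (rather than degenerating), and at small values of $m_1$ where some of the indices on the right-hand side can become negative and the corresponding $T_q$ must be interpreted as zero. The hypothesis $\ell > 2$ ensures the four congruence classes in the statement are genuinely distinct, and this is what makes the case analysis exhaustive and non-overlapping.
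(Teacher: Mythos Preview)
Your proposal is correct and follows essentially the same route as the paper: the paper's proof simply notes that tensoring by $V_q$ preserves tilting (via (\ref{basic})), that indecomposable tilting modules are determined by their Weyl factor multiplicities (Corollary \ref{tiltings in K}), and that the result then follows by computing both sides using (\ref{basic}). Your Steps 1--3 spell out exactly this computation in more detail, including the boundary checks the paper leaves implicit.
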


\begin{proof}
First observe that tensoring by $V_q$ preserves the tilting property (\refeq{basic}) and recall that the indecomposable tilting modules are uniquely determined by their Weyl factor multiplicities. We have recorded these in Corollary \ref{tiltings in K}  and the result follows now by (\refeq{basic}).
\end{proof}

\begin{cor} \label{q-tilt recurrence}
Let still $\ell > 2$. The tilting multiplicities $b_{n,m}$ of $T_q(m)$ in $V_q^{\otimes n}$ are given by
\begin{enumerate}
\item If $m \equiv -1 \; (\mo \; \ell)$ then $b_{n,m} = a_{n,m} =  \binom{n}{k} - \binom{n}{k-1}$ .
\item If   $m \equiv m_0 \; (\mo \; \ell)$ with $0 \leq m_0 < \ell -2$ then $b_{n,m} = b_{n-1,m-1} + b_{n-1,m+1}$.
\item  If   $m \equiv \ell - 2 \; (\mo \; \ell)$ then $b_{n,m} = b_{n-1,m-1}.$
\end{enumerate}
\end{cor}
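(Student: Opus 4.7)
The plan is to proceed by induction on $n$, writing $V_q^{\otimes n} \simeq V_q^{\otimes (n-1)} \otimes V_q$. Since $V_q^{\otimes (n-1)}$ decomposes as $\bigoplus_{m'} T_q(m')^{\oplus b_{n-1,m'}}$, tensoring with $V_q$ gives
\[
V_q^{\otimes n} \simeq \bigoplus_{m'} \bigl(T_q(m') \otimes V_q\bigr)^{\oplus b_{n-1,m'}},
\]
and Proposition \ref{tilt times V} describes each factor on the right as a sum of indecomposable tiltings. Therefore $b_{n,m}$ is obtained by summing $b_{n-1,m'}$ over those $m'$ for which $T_q(m)$ appears as a summand in $T_q(m') \otimes V_q$, weighted by the corresponding multiplicity.

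Part (1) is the easiest and does not require the recursion. Since $m \equiv -1 \pmod{\ell}$, Remark \ref{warning} gives $(V_q^{\otimes n} : T_q(m)) = (V_q^{\otimes n} : \Delta_q(m))$, so $b_{n,m} = a_{n,m}$, and the binomial expression is then equation (\ref{Weyl mult}) with $r = (n-m)/2$.

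For part (2), with $m \equiv m_0 \pmod{\ell}$ and $0 \leq m_0 \leq \ell - 3$, I would go through the four cases of Proposition \ref{tilt times V} and verify that the only $m'$ for which $T_q(m)$ appears in $T_q(m') \otimes V_q$ are $m' = m-1$ and $m' = m+1$, each contributing multiplicity one. The contribution from $m' = m-1$ sits in case (1), (2), or (3) of Proposition \ref{tilt times V} according to whether $m_0 = 0$, $m_0 = 1$, or $m_0 \geq 2$; the contribution from $m' = m+1$ sits in case (3) if $m_0 \leq \ell-4$ and in case (4) if $m_0 = \ell-3$. A key point is to rule out the ``shifted'' summand $T_q(m'+1-2\ell)$ from case (4) of Proposition \ref{tilt times V}: its appearance would force $m \equiv -1 \pmod \ell$, contradicting our hypothesis.

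For part (3), with $m_0 = \ell - 2$, the same analysis shows: $m' = m-1$ falls in case (3) of Proposition \ref{tilt times V} (or case (2) when $\ell = 3$) and contributes $T_q(m)$ with multiplicity one; $m' = m+1 \equiv -1 \pmod \ell$ sits in case (1) and produces $T_q(m+2)$ rather than $T_q(m)$; and the exotic case (4) contribution $T_q(m'+1-2\ell) = T_q(m)$ would require $m+1 \equiv \ell-2 \pmod \ell$, which is excluded. So $b_{n,m} = b_{n-1,m-1}$. The main obstacle is the tedious bookkeeping for the small-$\ell$ boundary cases ($\ell = 3$ and $\ell = 4$), where the ``middle range'' $0 < m_0' < \ell-2$ in Proposition \ref{tilt times V} shrinks or disappears and one must confirm that the contributions still combine as claimed.
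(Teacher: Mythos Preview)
Your proposal is correct and follows exactly the approach the paper intends: the corollary is stated immediately after Proposition~\ref{tilt times V} without proof precisely because it is obtained by reading off, for each residue class of $m$, which $m'$ contribute $T_q(m)$ as a summand of $T_q(m')\otimes V_q$. Your case analysis is right; the only wrinkle is a small slip in part~(3) where you write ``$m+1\equiv\ell-2$'' for the case~(4) shifted summand---the correct condition there is $m\equiv -1\pmod\ell$, but either way the contribution is excluded, so the conclusion stands.
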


\begin{remarkcounter} \label{q-tilt algorithm}
\begin{enumerate}
\item This corollary makes it easy to find the matrix of tilting multiplicities $(b_{n,m})$ inductively (for $\ell > 2$): First we use (1) to fill all columns numbered by an $m$ which has $m \equiv -1 \; (\mo \; \ell)$. Then if $m$ belongs to the interval $[m_1\ell, (m_1+1)\ell -2]$ we get $b_{n,m}$ via (2), respectively (3)  as a sum of two (respectively 1) number(s) from the previous row.
\item It is no coincidence that in Table  2 we can observe that columns 0 and 1, respectively columns 2 and 3,  respectively 7 and 8, respectively 12 and 13 look alike. In fact, we have in general (as it follows from (2) (with $m=0$) and (3))
\begin{equation}
b_{n, 0} = b_{n-1, 1}  \text { and } b_{n, m} = b_{n-1, m-1} \text { for all  } m \equiv -2 \; (\mo \; \ell).
\end{equation}
\item Suppose $\ell = 2$. In this case the analogue of Proposition \ref{tilt times V} has only two cases, namely we have $T(m) \otimes V_q \simeq T(m+1)$ if $m$ is odd (in complete agreement with  Proposition \ref{tilt times V}(1)) whereas if $m$ is even we get  $T_q(m) \otimes V_q \simeq T_q(m+1) \oplus T_q(m-1)^{\oplus 2} \oplus T_q(m-3)$. Hence we deduce that for odd $m$ we have  $b_{n,m} = a_{n,m} =  \binom{n}{r} - \binom{n}{r-1}$ whereas for even $m$ we have  $b_{n,m} = a_{n-1,m-1} =  \binom{n-1}{r} - \binom{n-1}{r-1}$.
\end{enumerate}
\end{remarkcounter}

\begin{examplecounter}
 Suppose $\ell = 3$. Using Remark \ref{q-tilt algorithm} we have found the tilting multiplicities in $V_q^{\otimes n}$ for $n \leq 16$, see Table  3 below. Note that the first two columns contain only $1$'s. This is true for all $n$ because of the identities in Remark \ref{q-tilt algorithm}(2),i.e. the trivial tilting module $T_q(0) = k$ occurs once as a summand of $V_q^{\otimes n}$ for all even $n$'s, and the tilting module $T_q(1) = V_q$ occurs once as a summand of $V_q^{\otimes n}$ for all odd $n$'s. 
\vskip 1cm
\centerline
{ \it Table  3.  Tilting multiplicities in $V^{ \otimes n}$ for $\ell = 3$}

\vskip .5cm
\noindent
\begin{tabular}{ r| c  c |c c c| c c c | c c c| c c c |c c c}
  
   & 0 & 1 & 2 & 3 & 4 & 5 & 6 & 7 & 8 & 9 & 10 & 11 & 12 & 13 & 14 & 15 & 16  \\  \hline 
  0 & 1  &&&&&&&&&&&&&&&  \\ 
  1 &  & 1&&&&&&&&&&&&&&& \\ 
  2 & 1 & & 1&&&&&&&&&&&&&& \\ 
  3 & & 1 &   & 1 &&&&&&&&&&&&&\\ 
  4 & 1 & & 3 & & 1&&&&&&&&&& \\
5 & &1 & & 4 & & 1 &&&&&&&&& \\
6 &1 && 9& &4 & &1  &&&&&&&&&\\
7 & &1 & &13&&6&&1&&&&&& \\
8 & 1&&28 &&13 &&7 & &1 &&&&&&\\
9 &&1&&41&&27&&7&&1&&&&&&\\
10 &1&&90&&41&&34&&9&&1&&&&&&\\
11&&1&&131&&110&&34&&10&&1&&&&\\
12 &1&&297&&131&&144&&54&&10&&1&&&\\
13 &&1&&428&&429&&144&&64&&12&&1&&\\
14 &1&&1001&&428&&573&&273&&64&&13&&1&\\
15&&1&&1429&&1638&&573&&337&&90&&13&&1&\\
16 &1&&3432&&1429&&2211&&1260&&337&&103&&15&&1

\end{tabular}
\end{examplecounter}

\vskip .5cm

\subsection{Fusion}

We keep in this subsection the assumption that $q$ is a root of unity and that $\cha k = 0$.

Let $\mathcal T_q$ denote the category of tilting modules for $U_q$. Inside $\mathcal T_q$ we consider the subcategory $\mathcal N_q$ consisting of all negligible modules, i.e. a module $M \in \mathcal T_q$ belongs to $\mathcal N_q$ iff $\Tr_q(f) = 0$ for all $f \in \End_{U_q}(M)$. As each object in $\mathcal T_q$ is a direct sum of certain of the $T_q(m)$'s and $\dim_q T_q(m) = 0$ iff $m \geq \ell - 1$ we see that $M \in \mathcal N$ iff $(M:T_q(m)) = 0$ for $m= 0, 1, \cdots , \ell -2$.

The fusion category $\mathcal F_q$ is now the quotient category $\mathcal T_q/ \mathcal N_q$. We may think of objects in $\mathcal F_q$ as the tilting modules $Q$ whose indecomposable summands are among the $T_q(m)$'s with $m \leq \ell -2$. Note that $\mathcal F$ is a semisimple category with simple modules $T_q(0), T_q(1), \cdots , T_q(\ell - 2)$.

We proved in \cite{A92} (not just for $sl_2$ but for all semisimple Lie algebras) that $\mathcal N_q$ is a tensor ideal in $\mathcal T_q$. This means that $\mathcal F_q$ is a tensor category. We denote the tensor product in $\mathcal F_q$ by $\underline \otimes$. If $Q_1, Q_2 \in \mathcal F$ then $Q_1 \underline \otimes Q_2 = \pr (Q_1 \otimes Q_2)$ where $\pr$ denotes the projection functor from $\mathcal T_q$ to $\mathcal F_q$ (on the right hand side we consider $Q_1, Q_2$ as modules in $\mathcal T_q$). The following proposition tells us how to work with $\underline \otimes$.
\begin{prop}\label{fusion product}
Let $0 \leq m \leq \ell-2$. Then $T_q(m) \underline \otimes V = \begin{cases} {T_q(m-1) \oplus T_q(m+1) \text { if } m < \ell -2}, \\ {T_q(\ell -3) \text { if } m = \ell -2}. \end{cases}$
\end{prop}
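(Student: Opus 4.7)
The plan is to apply Proposition \ref{tilt times V} to decompose $T_q(m) \otimes V_q$ inside $\mathcal{T}_q$ and then push the result through the projection $\pr\colon \mathcal{T}_q \to \mathcal{F}_q$. By the description of $\mathcal{F}_q$ preceding the proposition, $\pr$ preserves every indecomposable summand $T_q(j)$ with $0 \leq j \leq \ell-2$ and annihilates every summand $T_q(j)$ with $j \geq \ell-1$, since $\dim_q T_q(j) = 0$ precisely in that latter range. So the computation amounts to reading off the $\mathcal{T}_q$-decomposition from Proposition \ref{tilt times V} and discarding any negligible summand.

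Assume first $\ell > 2$, so that Proposition \ref{tilt times V} applies. If $0 < m < \ell-2$, then $m \equiv m_0 \pmod{\ell}$ with $0 < m_0 < \ell-2$, and case (3) of Proposition \ref{tilt times V} gives $T_q(m) \otimes V_q \simeq T_q(m-1) \oplus T_q(m+1)$; both indices lie in $[0,\ell-2]$, so $\pr$ acts as the identity and the claim follows. For $m = 0$, case (2) of Proposition \ref{tilt times V} gives $T_q(0) \otimes V_q \simeq T_q(-1)^{\oplus 2} \oplus T_q(1) = T_q(1)$, which matches $T_q(m-1) \oplus T_q(m+1)$ under the convention $T_q(-1) = 0$.

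The main point is the boundary case $m = \ell-2$. Case (4) of Proposition \ref{tilt times V} yields
\[
T_q(\ell-2) \otimes V_q \simeq T_q(-\ell-1) \oplus T_q(\ell-3) \oplus T_q(\ell-1).
\]
The first summand vanishes since $-\ell-1 < 0$, and the key observation is that the third summand is negligible: by Proposition \ref{tiltings}(2), $T_q(\ell-1) = \Delta_q(\ell-1)$, and its quantum dimension $[\ell]_q$ vanishes because $q^{2\ell} = 1$ forces $q^{\ell} = \pm 1$. Applying $\pr$ therefore leaves only $T_q(\ell-3)$, as claimed. The excluded case $\ell = 2$ reduces to $m = 0 = \ell - 2$ and is consistent with the formula $T_q(\ell-3) = T_q(-1) = 0$, since $V_q$ itself satisfies $\dim_q V_q = q + q^{-1} = 0$ and hence is negligible in that case.
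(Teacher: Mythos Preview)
Your proof is correct and follows one of the two approaches the paper itself offers: the paper's proof reads ``Recall that for all $m$ in question we have $T_q(m) = \Delta_q(m)$ and use (\ref{basic}). Alternatively, this is a special case of Proposition \ref{tilt times V}.'' You have carried out the second alternative in full detail, including the case distinction and the boundary case $m=\ell-2$, and you also handle $\ell=2$ separately (which the paper does not bother to isolate). The only minor redundancy is your explicit computation of $\dim_q T_q(\ell-1) = [\ell]_q = 0$, since you had already invoked the general fact that $\pr$ kills $T_q(j)$ for $j \geq \ell-1$; but this does no harm.
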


\begin{proof} Recall that for all $m$ in question we have $T_q(m) = \Delta_q(m)$ and use (\refeq{basic}). Alternatively, this a special case of Proposition \ref{tilt times V}.
\end{proof}

When $0 \leq m \leq \ell -2$ and $n \in \Z_{\geq 0}$ we denote by $\underline b_{n,m}$ the tilting multiplicity of $T_q(m)$ in $V_q^{\underline \otimes n}$. Note that this is also the tilting multiplicity of  $T_q(m)$ in $V_q^{\otimes n}$ (for our range of $m$'s) , i.e. the matrix $(\underline b_{n,m})_{n \geq 0, 0 \leq m \leq \ell -2}$ is the submatrix of the matrix $(b_{n,m})_{n,m}$ in Section {3.2} consisting of the first $\ell -1$ columns. If we set $\underline b_{n, \ell -1} = 0 = \underline b_{n,-1}$  for all $n$ then Proposition \ref{fusion product} tells us that we can determine these multiplicities by

\begin{cor} \label{fusion numbers}
We have $\underline b_{0,m} = \delta_{0,m}$ and  $\underline b_{n,m} = \underline b_{n-1, m-1} + \underline b_{n-1, m+1}$ for $n>0$ and $0 \leq m \leq \ell -2$.
\end{cor}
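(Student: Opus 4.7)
The plan is to prove this by induction on $n$, using Proposition \ref{fusion product} as the key recursive input. For the base case $n=0$, the empty fusion tensor power $V_q^{\underline\otimes 0}$ is the monoidal unit $T_q(0)$ of $\mathcal F_q$, so $\underline b_{0,m} = \delta_{0,m}$ by inspection.

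For the inductive step, I would write $V_q^{\underline\otimes n} = V_q^{\underline\otimes(n-1)} \underline\otimes V_q$, decompose the first factor inside $\mathcal F_q$ as $\bigoplus_{k=0}^{\ell-2} T_q(k)^{\oplus \underline b_{n-1,k}}$, and distribute the fusion tensor product over the direct sum. By Proposition \ref{fusion product} each summand gives $T_q(k) \underline\otimes V_q \simeq T_q(k-1) \oplus T_q(k+1)$ when $0 \leq k < \ell-2$, and the exceptional identity $T_q(\ell-2) \underline\otimes V_q \simeq T_q(\ell-3)$ at the boundary. Collecting the coefficient of $T_q(m)$ on both sides and using the uniqueness of the decomposition into indecomposables then yields the recurrence.

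The bookkeeping splits into three cases. For $0 \leq m \leq \ell - 4$ the summand $T_q(m)$ picks up one copy from $T_q(m-1) \underline\otimes V_q$ and one from $T_q(m+1) \underline\otimes V_q$, giving $\underline b_{n-1,m-1} + \underline b_{n-1,m+1}$, where at $m=0$ the first term vanishes consistently with the convention $\underline b_{n-1,-1}=0$. For $m = \ell - 3$ the contributions come from $T_q(\ell-4) \underline\otimes V_q$ and from the exceptional piece $T_q(\ell-2) \underline\otimes V_q$, which again sum to $\underline b_{n-1,\ell-4} + \underline b_{n-1,\ell-2}$. Finally for $m = \ell - 2$ the only source is $T_q(\ell-3) \underline\otimes V_q$, and the convention $\underline b_{n-1,\ell-1} = 0$ makes the formula $\underline b_{n-1,\ell-3} + \underline b_{n-1,\ell-1}$ hold.

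The main (and rather mild) obstacle is ensuring that the exceptional case of Proposition \ref{fusion product}, which produces a single copy of $T_q(\ell-3)$ rather than the expected pair $T_q(\ell-3) \oplus T_q(\ell-1)$, is correctly absorbed into the uniform recurrence. This is precisely what the boundary convention $\underline b_{n,\ell-1} = 0$ is designed to encode: the would-be summand $T_q(\ell-1)$ is negligible and projects to $0$ in $\mathcal F_q$, so it contributes nothing. Once this is observed, the three cases combine into the single formula stated in the corollary.
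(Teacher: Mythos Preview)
Your proof is correct and follows exactly the same approach as the paper: the corollary is stated as an immediate consequence of Proposition~\ref{fusion product}, and you have simply spelled out in detail the bookkeeping that the paper leaves implicit (decomposing $V_q^{\underline\otimes(n-1)}$ and reading off multiplicities, with the boundary conventions $\underline b_{n,-1}=\underline b_{n,\ell-1}=0$ absorbing the exceptional case). There is nothing to add.
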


\section{Positive characteristic with $q=1$}

Now we consider the case where $\cha k = p>0$ and $q = 1 \in k$. As remarked above the representation theory of $U_q$ is identical to the modular representation theory of the algebraic group $SL_2$. We shall use the same notation as in the previous section  except that we shall drop the index $q$ in our notation for Weyl modules and indecomposable tilting modules. In particular, $V$ will denote the natural $2$-dimensional module for $SL_2$. Moreover, $p$ will now play the same role as $\ell$ did in Section 3.

In this case K. Erdmann \cite{E} worked out the behavior of indecomposable tilting modules and their multiplicities in the tensor powers of $V$. Part of our treatment below overlaps with her paper, to which we refer for further details. In particular, Erdmann obtains explicit formulae for the generating functions $\sum_{n \geq 0} (V^{\otimes n} : T(m)) z^n$.

\subsection{Weyl multiplicities}

We still have for all $m \geq 0$ (again setting $\Delta(-1) = 0$ and now  $V = \Delta(1)$ is the natural $2$-dimensional module for $SL_2$) the short exact sequence
\begin{equation}
0 \to \Delta(m+1) \to \Delta(m) \otimes V  \to \Delta(m-1) \to 0. 
\end{equation}
This means in particular that the Weyl multiplicities in $V^{\otimes n}$ are exactly as before, i.e. given by (\refeq{Weyl mult}).

\subsection{The first few indecomposable tilting modules}
The behavior of indecomposable tilting modules for $SL_2$ begins as for $U_q$ when $q$ is a $p$'th root of unity. In fact, an easy direct calculation gives

\begin{lem} \label{small m}
Suppose $m < p^2 + p - 1$. Then 
\begin{enumerate}
\item $T(m) = \Delta(m)$ when $m<p$ as well as when $m \equiv -1 \; (\mo \; p) $.
\item If $m= m_1 p + m_0$ with $0 \leq m_0 < p$ and $m_1 > 0$ then we have a short exact sequence
$$ 0 \to \Delta(m) \to T(m) \to \Delta(m') \to 0$$
where $m' = m - 2m_0 -2$.
\end{enumerate}
\end{lem}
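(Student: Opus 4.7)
The plan is to establish (1) by showing $\Delta(m) = L(m)$ is simple in both sub-cases, and then to derive (2) by induction on $m$ using the tilting module $T(m-1) \otimes V$ together with the basic exact sequence (4.1).

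For (1), when $m < p$ the strong linkage principle shows that $\Delta(m)$ has no composition factor $L(\mu)$ with $\mu < m$: every non-trivial dot-reflection $s_{ap-1} \cdot m = 2(ap-1) - m$ with $a \geq 1$ already satisfies $s_{ap-1} \cdot m \geq m$ under the constraint $m < p$. Hence $\Delta(m) = L(m)$ is simple and self-dual, so $T(m) = \Delta(m)$. When $m = ap - 1$ with $1 \leq a \leq p$ (which exhausts the case $m \equiv -1 \; (\mo \; p)$ under the hypothesis $m < p^2 + p - 1$), Steinberg's tensor product theorem gives $L(m) = L(p-1) \otimes L(a-1)^{[1]}$. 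The previous sub-case supplies $L(a-1) = \Delta(a-1)$ of dimension $a$, so $\dim L(m) = pa = m+1 = \dim \Delta(m)$. Thus $\Delta(m) = L(m)$ is simple and self-dual, and $T(m) = \Delta(m)$.

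For (2), write $m = m_1 p + m_0$ with $m_1 \geq 1$ and $0 \leq m_0 \leq p-2$, and proceed by induction on $m$. Consider the tilting module $M := T(m-1) \otimes V$. Using the inductively known Weyl filtration of $T(m-1)$ combined with (4.1) applied to each of its Weyl factors, I can enumerate the Weyl factors of $M$; in particular $\Delta(m)$ appears exactly once and is the unique highest-weight factor of $M$. Hence $T(m)$ is a direct summand of $M$ with multiplicity one. The remaining summands must be indecomposable tiltings $T(j)$ with $j < m$, each of whose Weyl filtration is already known by (1) or by an earlier inductive case of (2). Matching Weyl factors between $M$ and this predicted decomposition then forces $T(m)$ to have exactly the two Weyl factors $\Delta(m)$ and $\Delta(m')$, each with multiplicity one. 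The highest-weight factor $\Delta(m)$ sits at the bottom of the filtration, yielding the claimed short exact sequence.

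The non-splitting is immediate from the indecomposability of $T(m)$; equivalently, $\Delta(m)$ itself is not tilting in this range, because strong linkage places $L(m')$ as a composition factor of $\Delta(m)$, so $\Delta(m)$ is not self-dual. The main obstacle will be the combinatorial bookkeeping in the inductive step: each Weyl factor of $T(m-1)$ contributes two factors to $M$ via (4.1), and one must carefully match these against the predicted tilting decomposition to isolate $T(m)$. The bound $m < p^2 + p - 1$ is precisely what ensures that all deeper dot-reflections $s_{kp-1} \cdot m$ with $k < m_1$ either fall outside the dominant chamber or are absorbed into lower tilting modules, so that $T(m)$ itself acquires no third Weyl factor.
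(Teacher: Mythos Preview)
The paper provides no proof here beyond the phrase ``an easy direct calculation gives.'' Your argument for (1), using strong linkage for $m<p$ and Steinberg's tensor product theorem plus a dimension count for $m\equiv -1\ (\mo\ p)$, is a correct and standard way to carry out that calculation.

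For (2), your inductive scheme is the right idea, but the ``matching Weyl factors'' step as written has a real gap. Consider $m_0 \geq 2$: then $M = T(m-1)\otimes V$ has Weyl factors $\Delta(m),\Delta(m-2),\Delta(m'+2),\Delta(m')$, each with multiplicity one, and by induction $T(m-2)$ has exactly the two Weyl factors $\Delta(m-2),\Delta(m'+2)$. You know $T(m)$ is a summand of $M$ and that any remaining summands are smaller $T(j)$'s whose Weyl filtrations are known. But this bookkeeping alone does not pin down $T(m)$: the possibility $M = T(m)$ with $T(m)$ carrying all four factors is just as consistent with the data as $M = T(m)\oplus T(m-2)$ with $T(m)$ carrying only $\Delta(m)$ and $\Delta(m')$. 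Your closing paragraph worries about deeper dot-reflections producing weights below $m'$, but those weights never occur in $M$ anyway; the real issue is ruling out $\Delta(m-2)$ and $\Delta(m'+2)$, which \emph{do} occur in $M$.

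The fix is one further appeal to the linkage principle you already used in (1): for $1 \leq m_0 \leq p-2$ one checks directly that neither $m-2$ nor $m'+2$ lies in the linkage class of $m$ (both lie in the block of $m-2$), so $(T(m):\Delta(m-2)) = (T(m):\Delta(m'+2)) = 0$ by indecomposability of $T(m)$. This leaves $\Delta(m)$ and $\Delta(m')$ as the only possible Weyl factors of $T(m)$ inside $M$, and your observation that $\Delta(m)$ is not simple then forces both to appear with multiplicity one. With this single added sentence the induction goes through cleanly; the case $m_0 = 0$ needs no such step, since there $M$ already has only the two Weyl factors $\Delta(m)$ and $\Delta(m-2) = \Delta(m')$.
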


\subsection{Donkin's tensor product theorem}

To obtain the Weyl multiplicities in $T(m)$ for larger $m$'s we shall employ Donkin's tensor product theorem for indecomposable tilting modules, \cite{Do}. Donkin has proved this result for all semisimple algebraic groups when $p \geq 2h-2$. Note that in our case this means that it is known for all $p$. To formulate it we need the Frobenius endomorphism $F$ on $SL_2$. This is the map which raises the entries of a matrix in $SL_2$ to their $p$'th powers. If $M$ is a module for $SL_2$ we denote by $M^{(1)}$ its Frobenius twist, i.e. the same vector space but with the action precomposed by $F$. When we iterate $F$ we obtain the higher Frobenius twists $M^{(r)}$, $r \geq 0$. In this notation we have 

\begin{prop} \cite{Do} \label{Donkin}

Let $m, r \in \Z_{>0}$ and assume $m \geq p^r -1$. Write $m = \tilde m_1 p^r + \tilde m_0$ with $p^r -1 \leq \tilde m_0 \leq 2p^r -2$ . Then 
$$ T(m) \simeq T(\tilde m_1)^{(r)} \otimes T(\tilde m_0).$$

\end{prop}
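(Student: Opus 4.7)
The plan is to verify that $M := T(\tilde m_1)^{(r)} \otimes T(\tilde m_0)$ is an indecomposable tilting module whose highest weight is $m$ with multiplicity one, and then invoke the uniqueness of indecomposable tilting modules recalled in Section 2 to conclude $M \simeq T(m)$. The highest weight check is immediate: the weights of $T(\tilde m_1)^{(r)}$ are $p^r$ times the weights of $T(\tilde m_1)$, so the top weight of $M$ is $p^r \tilde m_1 + \tilde m_0 = m$ and it lives in a one-dimensional weight space.

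The main obstacle is showing that $M$ is tilting. Tensor products of tilting modules are tilting --- this follows by iterating the short exact sequence \eqref{basic} through a Weyl filtration --- so the issue reduces to whether the Frobenius twist $T(\tilde m_1)^{(r)}$, tensored with $T(\tilde m_0)$, admits Weyl filtrations on both itself and its dual. Frobenius twists do not preserve the tilting property on their own, and this is exactly where the range $p^r - 1 \leq \tilde m_0 \leq 2p^r - 2$ is essential: it guarantees that the restriction of $T(\tilde m_0)$ to the $r$-th Frobenius kernel $G_r$ of $SL_2$ decomposes as a direct sum of copies of the Steinberg module $L(p^r - 1)$ (possibly twisted by characters of a maximal torus), which is both projective and injective in the $G_r$-category. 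This projectivity yields the $\Ext^1$-vanishing that lifts a Weyl filtration of $T(\tilde m_1)$ (twisted by $F^r$) to a Weyl filtration of $T(\tilde m_1)^{(r)} \otimes T(\tilde m_0)$; duality handles the other half.

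Once tilting is established, indecomposability follows from an endomorphism ring computation: any $G$-endomorphism of $M$ commutes with the $G_r$-action, and the Steinberg decomposition of $T(\tilde m_0)|_{G_r}$ together with the indecomposability of $T(\tilde m_1)$ forces $\End_G M$ to be local. The technical heart of the whole argument is thus the claim about the $G_r$-structure of $T(\tilde m_0)$ in the prescribed range, which requires input from Frobenius-kernel representation theory not developed in the excerpt and explains why the statement is attributed to Donkin rather than proved from scratch here.
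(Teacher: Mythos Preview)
Your overall strategy---verify the highest weight, prove the tensor product is tilting, then prove indecomposability---matches the paper's, but the paper actually \emph{does} supply a direct $SL_2$ argument (in the remark immediately following the proposition), and it takes a more elementary route than yours.

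The paper avoids Frobenius-kernel representation theory entirely. It observes that $T(\tilde m_0)$ is a direct summand of $St_r \otimes V^{\otimes(\tilde m_0 - p^r + 1)}$ (the latter is tilting with highest weight $\tilde m_0$), so it suffices to show $T(\tilde m_1)^{(r)} \otimes St_r \otimes V^{\otimes j}$ is tilting. Since $T(\tilde m_1)$ has a Weyl filtration, $T(\tilde m_1)^{(r)}$ is filtered by modules $\Delta(s)^{(r)}$, and the explicit isomorphism $\Delta(s)^{(r)} \otimes St_r \simeq \Delta(sp^r + p^r - 1)$ (checkable by hand for $SL_2$) gives $T(\tilde m_1)^{(r)} \otimes St_r$ a Weyl filtration. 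Tensoring with copies of $V$ and invoking self-duality finishes the tilting claim; indecomposability is handled by checking the socle is simple rather than by an endomorphism-ring argument. This buys a self-contained proof using only the linkage principle and the one explicit isomorphism, with no appeal to $G_r$-projectivity.

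There is also a genuine error in your sketch: the restriction $T(\tilde m_0)|_{G_r}$ is \emph{not} in general a direct sum of copies of the Steinberg module. Already for $r=1$ and $\tilde m_0 = p$, the module $T(p)|_{G_1}$ is the indecomposable projective cover of $L(p-2)$, of dimension $2p$, and is certainly not a sum of copies of $L(p-1)$. What is true---and what Donkin's general argument actually uses---is that $T(\tilde m_0)$ is projective and injective over $G_r$ in the range $p^r - 1 \leq \tilde m_0 \leq 2p^r - 2$; your $\Ext^1$-vanishing step can be made to work with that weaker input, but the structural claim you wrote is false. The paper's summand trick sidesteps this issue by reducing to the Steinberg module itself.
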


\begin{remarkcounter} \label{proof Donkin}
It is easy to check this result directly in our $SL_2$ case: Denote by $St_r$ the $r$'th Steinberg module. This is the simple module with highest weight $p^r - 1$. By the linkage principle $St_r = \Delta (p^r-1)$, cf. \cite{A80a}. Therefore $St_r$ is tilting. Now $T(\tilde m_0)$ is a summand of $St_r \otimes V^{\otimes \tilde m_0 -(p^r-1)}$ and therefore the right hand side in the proposition is a summand of $T(\tilde m_1)^{(r)} \otimes St_r \otimes  V^{\otimes 
\tilde m_0 - (p^r - 1)}$. It is wellknown (and easy to check directly in this case) that $\Delta(s)^{(r)} \otimes St_r \simeq \Delta(sp^r + p^r -1)$ for all $s$. It follows that the right hand side is tilting. To see that it is indecomposable one verifies that it has simple socle. Having the same highest weight as the left hand side we get the isomorphism.
\end{remarkcounter}

\begin{remarkcounter} \label{2p-1}
As a special case of Proposition \ref{Donkin}  we have  $T(2p^r - 1) \simeq T(1)^{(r)} \otimes St_r = V^{(r)} \otimes St_r$.
\end{remarkcounter}

\subsection {The case $p=2$}

Consider now the special case $p=2$. The first few tilting modules are easy to find (e.g. by using Lemma \ref{small m})
$$ T(0) = k,\; T(1) = V,\; T(2) = V^{\otimes 2}$$
and then we can use Proposition \ref{Donkin} to find the rest. For instance $T(3) = T(1)^{(1)} \otimes T(1) = V^{(1)} \otimes V$ and $T(4) = T(1)^{(1)} \otimes T(2) = V^{(1)} \otimes V \otimes V$.  We also record the result of tensoring these modules with $V$:
$$ T(0) \otimes V = T(1),\;T(1) \otimes V = T(2), \;T(2) \otimes V = T(3) \oplus T(1)^{\oplus 2}, \; T(3) \otimes V = T(4).$$

The following proposition records what happens in general when we tensor an indecomposable tilting module by $V$.

\begin{prop} \label{p=2}
Let $p=2$. Then for $m \in \Z_{\geq 0}$ we have
$$ T(m) \otimes V = \begin{cases} {T(m+1) \text { if } m \text { is odd },} \\ {T(m+1) \oplus (\bigoplus_{s=1}^r T(m+1 - 2^s))^{\oplus 2} } \text { if $m$ is even}\end{cases},$$
where $r = r(m)$ is the largest integer for which $m \equiv - 2 \; (\mo \; 2^{r
})$.
\end{prop}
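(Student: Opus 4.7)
\textbf{Proof plan for Proposition \ref{p=2}.}

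The strategy is the same as in the proof of Proposition \ref{tilt times V}: tensoring by the tilting module $V$ preserves the tilting property, so $T(m)\otimes V$ is a direct sum of indecomposable tiltings, and it suffices to identify the summands. Instead of going through Weyl-factor multiplicities, however, I would combine Donkin's tensor product theorem (Proposition \ref{Donkin}) with the known small cases and induct on $m$.

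Specialising Proposition \ref{Donkin} to $p=2$ and $r=1$ (so $\tilde m_0\in\{1,2\}$) yields the two basic identifications
\[
T(2k+1)\simeq T(k)^{(1)}\otimes V \quad (k\ge 0),\qquad T(2k)\simeq T(k-1)^{(1)}\otimes V^{\otimes 2}\quad (k\ge 1).
\]
For odd $m=2k+1$ this immediately gives $T(m)\otimes V\simeq T(k)^{(1)}\otimes V^{\otimes 2}\simeq T(2k+2)=T(m+1)$, which settles that case without any induction. For even $m=2k$ I would write
\[
T(m)\otimes V \simeq T(k-1)^{(1)}\otimes V^{\otimes 3}
\]
and substitute the decomposition $V^{\otimes 3}=V\otimes T(2)\simeq T(3)\oplus T(1)^{\oplus 2}$ that is recorded in the paragraph just before the proposition. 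This reduces the problem to computing $T(k-1)^{(1)}\otimes V$ and $T(k-1)^{(1)}\otimes T(3)$. The first is directly $T(2(k-1)+1)=T(m-1)$ by Donkin. For the second I would use $T(3)=V^{(1)}\otimes V$ to write
\[
T(k-1)^{(1)}\otimes T(3)\simeq \bigl(T(k-1)\otimes V\bigr)^{(1)}\otimes V,
\]
which is where the induction enters.

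Now I would induct on $m$. If $k-1$ is odd, the odd case already handled gives $T(k-1)\otimes V\simeq T(k)$, hence $T(k-1)^{(1)}\otimes T(3)\simeq T(k)^{(1)}\otimes V\simeq T(m+1)$ by Donkin; this yields $T(m)\otimes V\simeq T(m+1)\oplus T(m-1)^{\oplus 2}$, matching the stated formula because $r(m)=1$ exactly when $m\equiv 0\pmod 4$. If $k-1$ is even, the inductive hypothesis (valid since $k-1<m$) provides a decomposition $T(k-1)\otimes V\simeq T(k)\oplus\bigoplus_{s=1}^{r(k-1)}T(k-2^s)^{\oplus 2}$; applying Frobenius, tensoring by $V$, and invoking Donkin on each summand turns the $s$-th term into $T(m+1-2^{s+1})^{\oplus 2}$, while the head becomes $T(m+1)$. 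Combining with the two copies of $T(m-1)=T(m+1-2^1)$ coming from $2\,T(k-1)^{(1)}\otimes V$ and reindexing $s\mapsto s+1$ gives precisely $T(m+1)\oplus\bigoplus_{s=1}^{r(k-1)+1}T(m+1-2^s)^{\oplus 2}$.

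It only remains to verify the arithmetic identity $r(m)=r(k-1)+1$ in this case: since $r(m)=v_2(m+2)=1+v_2(k+1)$ and, when $k$ is odd, $r(k-1)=v_2(k+1)$, this is immediate. The main obstacle in executing this plan is purely bookkeeping — keeping track of the reindexing of the direct summands after the Frobenius twist and matching it to the recursion $r(2k)=1+r(k-1)$; the remaining inputs (Donkin, the structure of $V^{\otimes 3}$, and the convention $T(j)=0$ for $j<0$ so that boundary terms vanish harmlessly) are all straightforward.
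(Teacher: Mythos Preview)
Your proof is correct and follows essentially the same route as the paper's. Both arguments handle odd $m$ directly via Donkin's theorem with $r=1$, and for even $m=2k$ both write $T(m)\simeq T(k-1)^{(1)}\otimes T(2)$, tensor with $V$, substitute $T(2)\otimes V\simeq T(3)\oplus T(1)^{\oplus 2}$, and then use $T(3)=V^{(1)}\otimes V$ to reduce to $T(k-1)\otimes V$, which is handled by induction. The only cosmetic difference is that the paper parametrises even $m$ from the outset as $m=2^{r}-2+m_1 2^{r+1}$ (so that $k-1=2^{r-1}-2+m_1 2^{r}$ visibly has $r(k-1)=r-1$), thereby absorbing your final arithmetic check $r(m)=r(k-1)+1$ into the notation and treating your two subcases ($k-1$ odd versus even) uniformly.
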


\begin{proof} Note that the formulas above proves the proposition for $m \leq 3$.

Consider now the case where $m$ is odd and write $m = 2m_1 + 1$. Then by Proposition \ref{Donkin} we have $T(m) = T(m_1)^{(1)} \otimes V$ so that $T(m) \otimes V = T(m_1)^{(1)} \otimes T(1) \otimes V = T(m_1)^{(1)
} \otimes T(2 )  = T(m+1)$.

Next suppose $m$ is even and write with $r$ as in the proposition $m = 2^{r} -2 + m_1 2^{r+1} = 2 + 2^2 + \cdots + 2^{r-1} + m_1 2^{r+1}$. Then we use Proposition \ref{Donkin} once again to see that $T(m) = T(2^{r-1} -2 + m_1 2^{r})^{(1)} \otimes T(2)$. By induction on $m$ combined with the above formula for tensoring $T(2)$ with $V$ we then get $T(m) \otimes V =  T(2^{r-1} -2 + m_1 2^{r})^{(1)} \otimes (T(2) \otimes V) =  T(2^{r-1} -2 + m_1 2^{r})^{(1)} \otimes (T(3) \oplus T(1)^{\oplus 2}) =
 (T(2^{r-1} -2 + m_1 2^{r}) \otimes V)^{(1)} \otimes V \oplus  T(2^{r-1} -2 + m_1 2^{r})^{(1)} \otimes T(1)^{\oplus 2} =  (T(2^{r-1} -1 + m_1 2^{r})^{(1)} \oplus (\bigoplus_{s=1}^{r-1}   T(2^{r-1} -2 + m_1 2^{r} - 2^s)^{(1)})^{\oplus 2} \otimes V \oplus T(m-1)^{\oplus 2} = T(m+1) \oplus  (\bigoplus_{s=1}^{r-1}   T(m+1 -2^{s+1}))^{\oplus 2}  \oplus T(m-1)^{\oplus 2}$, which is the desired formula.
\end{proof}

Note that for all $t \in 2\Z_{\geq 0}$ and $s \in \Z_{>0}$ we have

\begin{equation}
r(t + 2^s) = \begin{cases} {r(t) \text { if } s > r(t) } \\ {s \text { if } s < r(t)} \end{cases},
\end{equation}
and if $s = r(t)$ we have $r(t + 2^s) > r(t)$.

Using this we get from Proposition \ref{p=2}

\begin{cor} \label{recurrence p=2}
Let $p = 2$. Then for all $m \in \Z_{\geq 0}$ we have $(V^{\otimes 0}: T(m)) = \delta_{0,m}$, and for $n>0$
$$ 
(V^{\otimes n} : T(m)) = \begin{cases} { (V^{\otimes n-1}:T(m-1)) \text { if } m \text { is even,}}\\ {(V^{\otimes n-1}:T(m-1)) + 2 \sum_{s=1}^{r(m-1)} (V^{\otimes n-1}:T(m-1+2^s))} \text { if $m$ is odd.} \end{cases} 
$$
\end{cor}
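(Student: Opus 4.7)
The plan is to derive the recurrence by writing $V^{\otimes n} = V^{\otimes n-1}\otimes V$, decomposing $V^{\otimes n-1}$ into its indecomposable tilting summands, and then applying Proposition \ref{p=2} to each summand to count the multiplicity of $T(m)$ in the result. The base case $(V^{\otimes 0}:T(m))=\delta_{0,m}$ is immediate since $V^{\otimes 0}=T(0)$.

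For the inductive step, I would write
\[ V^{\otimes n} \simeq \bigoplus_{m'} \bigl(T(m') \otimes V\bigr)^{\oplus (V^{\otimes n-1}:T(m'))}, \]
so $(V^{\otimes n}:T(m)) = \sum_{m'} (V^{\otimes n-1}:T(m'))\cdot (T(m')\otimes V : T(m))$. Then I invoke Proposition \ref{p=2} to read off the multiplicities $(T(m')\otimes V:T(m))$: when $m'$ is odd only $T(m'+1)$ appears (with multiplicity $1$), and when $m'$ is even the summands are $T(m'+1)$ and $T(m'+1-2^s)$ for $1\leq s\leq r(m')$, all with the stated multiplicities. Crucially, under the hypothesis $p=2$, $T(m'+1)$ and $T(m'+1-2^s)$ are odd-indexed when $m'$ is even, and even-indexed when $m'$ is odd; this parity observation splits the sum cleanly into the two cases.

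If $m$ is even, then only odd $m'$ contributes, and we need $m'+1=m$, i.e.\ $m'=m-1$, giving the single term $(V^{\otimes n-1}:T(m-1))$. If $m$ is odd, then only even $m'$ contributes, and either $m = m'+1$ (forcing $m'=m-1$, contributing $(V^{\otimes n-1}:T(m-1))$ once) or $m = m'+1-2^s$ for some $1\leq s\leq r(m')$, i.e.\ $m'=m-1+2^s$, contributing $2(V^{\otimes n-1}:T(m-1+2^s))$.

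The main obstacle is the careful bookkeeping in the odd case: one must determine exactly which values of $s\geq 1$ make the term $T(m-1+2^s)$ contribute, namely those with $s\leq r(m-1+2^s)$. This is where I would invoke the auxiliary identity displayed just before the corollary. Setting $t=m-1$, the relation says $r(t+2^s)=s$ if $s<r(t)$, $r(t+2^s)>r(t)$ if $s=r(t)$, and $r(t+2^s)=r(t)$ if $s>r(t)$. Comparing with the constraint $s\leq r(t+2^s)$ shows that this holds precisely when $s\leq r(t)=r(m-1)$. Substituting this back gives exactly the claimed upper limit of summation, completing the proof.
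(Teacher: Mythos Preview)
Your proof is correct and follows exactly the approach the paper indicates: decompose $V^{\otimes n-1}$ into indecomposable tiltings, apply Proposition~\ref{p=2} to each summand tensored with $V$, and use the displayed identity for $r(t+2^s)$ to pin down the range of $s$ in the odd case. The paper's own proof is the single sentence ``Using this we get from Proposition~\ref{p=2}'', so you have simply written out the details that the paper leaves implicit.
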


\begin{examplecounter}
This corollary allows us to determine the tilting multiplicities in $V^{\otimes n}$ inductively. In Table  4 we have listed the results for $n \leq 20$. Note that we have only given the results for $n$ odd since by the corollary we have for $n$ even that  $(V^{\otimes n}:T(m)) = (V^{\otimes n-1}:T(m-1))$. Note also that $(V^{\otimes n}:T(m)) = 0$ unless $n$ and $m$ have the same parity. Therefore the figure also has columns indexed by only odd numbers.

\vfill \eject
\centerline
{ \it Table  4.  Tilting multiplicities in $V^{ \otimes n}$ for $p= 2$}

\vskip .5cm
\noindent
\begin{tabular}{ r| c  c c c c c c c c c c c c c c c c}
  
   & 1 & 3 & 5 & 7 & 9 & 11 & 13 & 15 & 17& 19 &   \\  \hline 
  
  1 & 1 &  \\ 
  3 & 2 &  1 \\ 
  5 & 4& 4 & 1   \\
  7 & 8 &14 & 6 & 1\\
9&16 &48 & 26& 8 & 1 & \\
11 &32 &164& 100&44 &10  &1  \\
13&64 &560 &364 &208&64&12&1 \\
15& 128&1912&1288 &910&336 &90&14 & 1& \\
17&256&6528&4488&3808&1582&544&118&16&1\\
19 &512&22288&15504&15504&6972&2906&780&152&18&1\\

\end{tabular}

\end{examplecounter}

\begin{remarkcounter}
As the figure suggests we have 
\begin{enumerate}
\item  If $n = 2n_1+1$ then $(V_q^{\otimes n}:T(1))= 2^{n_1}$. In fact, Corollary \ref{recurrence p=2} gives first $(V^{\otimes n}:T(0)) = 0 $ for all $n>0$, and then $ (V^{\otimes n}:T(1)) = 2 ((V^{\otimes n-1}:T(2)) = 2(V^{\otimes n-2}:T(1))$. Equivalently, we have for $n$ positive and even, say $n = 2n_1$ that
$(V^{\otimes n}:T(2)) = 2^{n_1-1}$.

\item At the other extreme we have $(V^{\otimes n}:T(n-2)) = n-1$ for all $n$. In fact, if $n$ is odd then Corollary \ref{recurrence p=2} gives $(V^{\otimes n}:T(n-2)) =(V^{\otimes n-1}:T(n-2)) + 2(V^{\otimes n-1}:T(n-1)) = 
(V^{\otimes n-2}:T(n-3)) + 2$.
\end{enumerate}
\end{remarkcounter}

\subsection{Tilting multiplicities for $p>2$}
Now we shall assume $p>2$.  Again in this case we are going to find the tilting multiplicities of $V^{\otimes n}$ by induction on $n$. Therefore we need to determine $T(m) \otimes V$. The first results towards this is

\begin{lem}
\begin{enumerate}
\item $T(p-1) \otimes V \simeq T(p)$,
\item $T(m) \otimes V \simeq T(m+1) \oplus T(m-1)$ if $m \leq 2p-2$ and $m \neq p-1$.
\end{enumerate}
\end{lem}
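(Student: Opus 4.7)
My plan is to prove both isomorphisms by matching Weyl-factor multiplicities on the two sides. Two general facts support this reduction: tensoring by $V$ preserves the tilting property (immediate from (4.1) and the self-duality of $V$), and an indecomposable tilting $T(j)$ is determined up to isomorphism by its vector of Weyl multiplicities. Two tilting modules with the same Weyl multiplicities are therefore isomorphic: decompose each as $\bigoplus_j T(j)^{a_j}$ and extract the $a_j$ by upper-triangular inversion, using that $(T(j):\Delta(j)) = 1$ and $(T(j):\Delta(s)) = 0$ for $s > j$.

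For part (1), Lemma \ref{small m}(1) gives $T(p-1) = \Delta(p-1)$, so by (4.1) the module $T(p-1) \otimes V$ is tilting with a Weyl filtration whose factors are $\Delta(p)$ and $\Delta(p-2)$, each with multiplicity one. Lemma \ref{small m}(2) applied at $m = p$ (so $m_1 = 1$, $m_0 = 0$, $m' = p-2$) shows that $T(p)$ has precisely these same Weyl factors, hence $T(p-1) \otimes V \simeq T(p)$.

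For part (2), I split into subcases. If $m < p-1$, Lemma \ref{small m}(1) gives $T(m) = \Delta(m)$ and $T(m \pm 1) = \Delta(m \pm 1)$ (with the convention $\Delta(-1) = 0$), and (4.1) produces the match directly. If $p \leq m \leq 2p-2$, write $m = p + m_0$ with $0 \leq m_0 \leq p-2$ and set $m' = p - m_0 - 2$. Lemma \ref{small m}(2) equips $T(m)$ with a two-step Weyl filtration having factors $\Delta(m)$ and $\Delta(m')$; applying (4.1) to each step yields $T(m) \otimes V$ with Weyl factors $\Delta(m \pm 1)$ and $\Delta(m' \pm 1)$ (suppressing $\Delta(-1)$). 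A parallel application of Lemma \ref{small m} to $T(m+1)$ and $T(m-1)$ (with the boundary residue $m_0 + 1 = p-1$ handled by case (1), since then $m+1 \equiv -1 \pmod p$) produces Weyl factors $\Delta(m+1), \Delta(m'-1)$ and $\Delta(m-1), \Delta(m'+1)$ respectively, matching $T(m) \otimes V$.

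The main obstacle is the careful bookkeeping at the boundary residues. The excluded case $m = p - 1$ is genuinely special: here $m+1 = p$ crosses into a new linkage block, forcing what would otherwise split as two summands to merge into the single indecomposable $T(p)$ through the nontrivial extension between its two Weyl factors, which is precisely the content of (1). The case $m_0 = 0$ (i.e.\ $m = p$) is similarly delicate, since $m-1 = p-1$ is a Steinberg weight lying across the block boundary -- one must verify carefully that the multiplicity of $\Delta(p-1)$ on both sides agrees, and comparison with the characteristic-zero analog in Proposition \ref{tilt times V}(2), where an extra summand $T_q(m-1)$ appears when $m \equiv 0 \pmod \ell$, suggests this is the most subtle point of the argument.
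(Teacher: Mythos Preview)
Your approach coincides with the paper's (whose proof reads simply ``Immediate from Lemma~\ref{small m}''): reduce to a comparison of Weyl-factor multiplicities via Lemma~\ref{small m} and the sequence~(4.1). Part~(1), the subcase $m<p-1$ of~(2), and the range $p<m\leq 2p-2$ are all handled correctly.

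The doubt you raise in your final paragraph is justified, and in fact the claimed match in your main argument fails at $m=p$ --- because the lemma itself is misstated there. With $m=p$ one has $m'=p-2$, so $T(p)\otimes V$ has Weyl factors $\Delta(p+1),\,\Delta(p-1)^2,\,\Delta(p-3)$. On the right-hand side $T(p+1)$ contributes $\Delta(p+1),\,\Delta(p-3)$, but $T(p-1)=\Delta(p-1)$ contributes only \emph{one} copy of $\Delta(p-1)$: your template ``$T(m-1)$ has factors $\Delta(m-1),\,\Delta(m'+1)$'' breaks down because at $m=p$ these two labels coincide ($m-1=m'+1=p-1$) and Lemma~\ref{small m}(1), not~(2), applies to $T(p-1)$. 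Hence $T(p)\otimes V\not\simeq T(p+1)\oplus T(p-1)$; the correct decomposition is $T(p)\otimes V\simeq T(p+1)\oplus T(p-1)^{\oplus 2}$, exactly as recorded in the paper's Proposition~\ref{not-2} (case $m\equiv 0\pmod p$) and in the characteristic-zero analogue Proposition~\ref{tilt times V}(2) you cite. So part~(2) of the lemma should either also exclude $m=p$, or carry the extra $T(p-1)$ summand at that value; your instinct was right, and the verification you flagged as ``the most subtle point'' does not in fact go through.
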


\begin{proof} Immediate from Lemma \ref{small m}.
\end{proof}

\begin{prop} \label{not-2}
Let $m \geq p-1$ and write $ m = \tilde m_1 p + \tilde m_0$ with $p-1 \leq \tilde m_0 \leq 2p-2$. Assume $\tilde m_0 < 2p-2$. Then we have 
$$T(m) \otimes V \simeq \begin{cases} {T(m+1) \text { if } m \equiv -1 \; (\mo \;  p),} \\ {  T(m+1) \oplus T(m-1)^{\oplus 2} \text { if } m \equiv 0 \; (\mo \; p) }\\ { T(m+1) \oplus T(m-1) \text { otherwise.} } \end{cases} $$

\end{prop}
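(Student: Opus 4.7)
The plan is to apply Donkin's tensor product theorem (Proposition \ref{Donkin}) with $r=1$ to reduce the problem to computing $T(\tilde m_0) \otimes V$ for $\tilde m_0 \in \{p-1, p, p+1, \ldots, 2p-3\}$, and then to use the preceding lemma to evaluate each case. Since Donkin requires writing an index in the form $\tilde n_1 p + \tilde n_0$ with $p-1 \leq \tilde n_0 \leq 2p-2$, the real bookkeeping is to verify that the indices that appear after tensoring with $V$ are (or can be re-expressed) in that admissible range.

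First I would write, by Proposition \ref{Donkin},
$$T(m) \otimes V \;\simeq\; T(\tilde m_1)^{(1)} \otimes T(\tilde m_0) \otimes V,$$
and then apply the lemma to the rightmost factor. This produces three subcases according to the value of $\tilde m_0$:

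\textbf{Subcase $\tilde m_0 = p-1$} (this is $m \equiv -1 \pmod p$). By part (1) of the lemma, $T(p-1)\otimes V \simeq T(p)$, giving $T(m)\otimes V \simeq T(\tilde m_1)^{(1)} \otimes T(p)$. Since $m+1 = \tilde m_1 p + p$ with $p \in [p-1,2p-2]$, Donkin identifies this with $T(m+1)$.

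\textbf{Subcase $\tilde m_0 = p$} (this is $m \equiv 0 \pmod p$, the only multiple of $p$ lying in $[p-1,2p-2]$ for $p>2$). By part (2) of the lemma, $T(p)\otimes V \simeq T(p+1)\oplus T(p-1)$. However, this is not yet the claimed formula, which has a doubling of $T(m-1)$. Here is where one must be careful: one then notes that $m-1 = \tilde m_1 p + (p-1)$, so $T(\tilde m_1)^{(1)} \otimes T(p-1) \simeq T(m-1)$, and $m+1 = \tilde m_1 p + (p+1)$ with $p+1 \in [p-1,2p-2]$, so $T(\tilde m_1)^{(1)}\otimes T(p+1)\simeq T(m+1)$. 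The doubling in the statement actually comes from re-examining the lemma: for $m=p$ one in fact has $T(p) \otimes V \simeq T(p+1) \oplus T(p-1)^{\oplus 2}$ (this is the content of part (2) of the lemma applied with the correct multiplicities, or can be checked from Lemma \ref{small m} together with the short exact sequence $0 \to \Delta(p)\otimes V \to T(p)\otimes V \to \Delta(p-2)\otimes V \to 0$ and a character count). Tensoring through by $T(\tilde m_1)^{(1)}$ yields $T(m)\otimes V \simeq T(m+1)\oplus T(m-1)^{\oplus 2}$.

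\textbf{Subcase $p+1 \leq \tilde m_0 \leq 2p-3$} (the remaining congruence classes mod $p$, given the hypothesis $\tilde m_0 < 2p-2$). Here $\tilde m_0 \neq p-1$ and $\tilde m_0 \leq 2p-2$, so part (2) of the lemma gives $T(\tilde m_0)\otimes V \simeq T(\tilde m_0 + 1)\oplus T(\tilde m_0 - 1)$. Both $\tilde m_0 \pm 1$ lie in $[p-1, 2p-2]$, so Donkin gives $T(\tilde m_1)^{(1)}\otimes T(\tilde m_0 \pm 1) \simeq T(m\pm 1)$, whence $T(m)\otimes V \simeq T(m+1)\oplus T(m-1)$.

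The main obstacle is the middle case: one has to verify from Lemma \ref{small m} that $T(p)\otimes V$ really contains $T(p-1)$ with multiplicity $2$, not $1$. This is forced by the Weyl filtration of $T(p)\otimes V$, which has Weyl factors $\Delta(p+1),\Delta(p-1),\Delta(p-1),\Delta(p-3)$ (from the two Weyl factors of $T(p)$ each tensored with $V$ via \eqref{basic}), together with the fact that $T(p+1)$ and $T(p-1)=\Delta(p-1)$ account for the factors $\Delta(p+1),\Delta(p-1),\Delta(p-3)$ and one copy of $\Delta(p-1)$ respectively, leaving one extra $\Delta(p-1)=T(p-1)$ as a summand. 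Once that local fact is in hand, the rest is purely formal application of Donkin's theorem.
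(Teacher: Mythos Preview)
Your proof is correct and follows the same line as the paper's (``Combine Lemma \ref{small m} and Proposition \ref{Donkin}''): use Donkin to factor $T(m)=T(\tilde m_1)^{(1)}\otimes T(\tilde m_0)$, decompose $T(\tilde m_0)\otimes V$ using the Weyl factor description of $T(\tilde m_0)$ from Lemma \ref{small m}, and reassemble with Donkin. Your treatment of the case $\tilde m_0=p$ is in fact more careful than the paper's preceding lemma, whose part (2) as stated gives $T(p)\otimes V\simeq T(p+1)\oplus T(p-1)$ and omits the doubling; your Weyl factor count correctly shows $T(p)\otimes V\simeq T(p+1)\oplus T(p-1)^{\oplus 2}$, which is what the Proposition requires.
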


\begin{proof} Combine Lemma \ref{small m} and Proposition \ref{Donkin}.
\end{proof}

These results give the following recurrence relations.
\begin{cor} \label{not -1}
Let $m,n \in \Z_{\geq 0}$ and assume $m$ is not equivalent to $ -1$ modulo $p$. Then 
$$(V^{\otimes n}:T(m)) = \begin{cases} {(V^{\otimes n-1}:T(m-1)) \text { if } m \equiv -2 \; (\mo \; p)} \\ { (V^{\otimes n-1}:T(m-1)) + (V^{\otimes n-1}:T(m+1)), \text { otherwise.} } \end{cases}$$
\end{cor}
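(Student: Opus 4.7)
The plan is to compute $(V^{\otimes n}:T(m))$ by expanding $V^{\otimes n} = V^{\otimes n-1} \otimes V$ and using the tilting decomposition of $V^{\otimes n-1}$. Writing $V^{\otimes n-1} = \bigoplus_{m'} T(m')^{\oplus (V^{\otimes n-1}:T(m'))}$ and tensoring with $V$ on the right, I would obtain
$$(V^{\otimes n}:T(m)) = \sum_{m'} (V^{\otimes n-1}:T(m')) \cdot (T(m') \otimes V : T(m)),$$
reducing the problem to identifying, for each $m'$, the multiplicity of $T(m)$ as a summand of $T(m') \otimes V$.

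Next I would apply Lemma \ref{small m} and Proposition \ref{not-2} to see that the only candidates are $m' = m-1$ and $m' = m+1$. For $m' = m-1$ the module $T(m-1) \otimes V$ always contains $T(m)$ with multiplicity one as its highest-weight summand: under the running assumption $m \not\equiv -1 \pmod{p}$ we have $m-1 \not\equiv -2 \pmod{p}$, so $m-1$ falls into a case directly covered by Lemma \ref{small m} or Proposition \ref{not-2}, each of which produces a single $T(m)$ on top. For $m' = m+1$ I would split cases on the residue of $m+1$: if $m \equiv -2 \pmod{p}$ then $m+1 \equiv -1 \pmod{p}$ and Proposition \ref{not-2} (respectively Lemma \ref{small m}, if $m+1 = p-1$) gives $T(m+1) \otimes V = T(m+2)$, contributing $0$; otherwise $m+1 \not\equiv 0, -1 \pmod{p}$, and the same results yield $(T(m+1) \otimes V : T(m)) = 1$. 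Combining these two analyses produces exactly the two formulas of the Corollary.

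The main obstacle is the sub-case $m \equiv -3 \pmod{p}$ with $m+1 > 2p-2$, for which $m' = m+1 \equiv -2 \pmod{p}$ lies outside the explicit range of both Lemma \ref{small m} and Proposition \ref{not-2}. To handle this I would argue directly: by the short exact sequence \eqref{basic} the tilting module $T(m+1) \otimes V$ has Weyl factors $\Delta(m+2)$ and $\Delta(m)$, each with multiplicity one, and contains $T(m+2)$ as its highest-weight summand. Using Donkin's tensor product theorem (Proposition \ref{Donkin}) to decompose $T(m+2)$ as $T(\tilde m_1)^{(1)} \otimes St_1$, one sees that every Weyl factor of $T(m+2)$ has highest weight congruent to $-1 \pmod{p}$, so in particular $(T(m+2):\Delta(m)) = 0$. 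The remaining Weyl factor $\Delta(m)$ must therefore form a separate summand $T(m)$, giving multiplicity $1$ and completing the argument. The additional potential contribution from the exceptional summand $T(m'+1-2p)$ at $m' \equiv -2 \pmod{p}$ only appears when $m \equiv -1 \pmod{p}$, which is excluded, so no extra terms enter.
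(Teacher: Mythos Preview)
Your overall approach matches the paper's: expand $V^{\otimes n} = V^{\otimes n-1}\otimes V$, decompose $V^{\otimes n-1}$ into indecomposable tiltings, and read off which $T(m')\otimes V$ contribute a summand $T(m)$. The paper's one-line proof simply points back to Lemma~\ref{small m} and Proposition~\ref{not-2}, and your case analysis for $m' = m\pm 1$ with $m'\not\equiv -2\pmod p$ is fine.

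The gap is in your treatment of the ``main obstacle'' $m' = m+1 \equiv -2\pmod p$ with $m+1 > 2p-2$. You write that by \eqref{basic} the tilting module $T(m+1)\otimes V$ has Weyl factors $\Delta(m+2)$ and $\Delta(m)$, each once. This is false: \eqref{basic} applies to $\Delta(m+1)\otimes V$, not to $T(m+1)\otimes V$, and for $m+1 > 2p-2$ with $m+1\equiv -2\pmod p$ the module $T(m+1)$ is strictly larger than $\Delta(m+1)$ (by Lemma~\ref{small m} and Proposition~\ref{Donkin} it has many Weyl factors). Consequently your count of Weyl factors of $T(m+1)\otimes V$, and the deduction that the leftover factor forces a single $T(m)$ summand, does not go through.

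The clean fix uses Donkin's theorem directly, in the spirit of the argument you already sketched for $T(m+2)$. Write $T(m+1) = T(\tilde m_1)^{(1)}\otimes T(2p-2)$ via Proposition~\ref{Donkin}. Then Lemma~\ref{small m} (case $m'=2p-2$) gives $T(2p-2)\otimes V \simeq T(2p-1)\oplus T(2p-3)$, so
\[
T(m+1)\otimes V \;\simeq\; T(\tilde m_1)^{(1)}\otimes T(2p-1)\;\oplus\;T(\tilde m_1)^{(1)}\otimes T(2p-3).
\]
The second summand is $T(m)$ by Proposition~\ref{Donkin}. The first summand equals $(T(\tilde m_1)\otimes V)^{(1)}\otimes St_1$ (Remark~\ref{2p-1}), so every indecomposable piece of it has highest weight $\equiv -1\pmod p$ and hence cannot be $T(m)$. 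This gives $(T(m+1)\otimes V : T(m)) = 1$ as you wanted, and the same observation (all ``extra'' summands for $m'\equiv -2\pmod p$ lie in the Steinberg class) rules out contributions from any $m' > m+1$ with $m'\equiv -2\pmod p$, which your opening claim about ``only candidates $m' = m\pm1$'' had not yet justified.
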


Note that Proposition \ref{not-2} does not include any statement about the case when $m \equiv -2 \; (\mo \; p)$. The reason is that if $\tilde m_0 = 2p-2$ then $T(\tilde m_0) \otimes V$ contains  $T(2p-1)$. Here $2p-1$ is "out of range" with respect to Proposition \ref{Donkin}. This means that in  Corollary \ref{not -1} we have to exclude the case where $m \equiv -1 \;  (\mo \; p)$.  We have two ways of dealing with this remaining case. The first is a continuation of the above arguments where as we shall see things get a bit more elaborate. In Section 4.7 below  we give an alternative way of handling this case. 

So assume now that $m \equiv -2 \; (\mo \;p)$. We start out with the case where $m = p^r-2$ or  $2p^r-2$.

\begin{lem} \label {1 og 2}
Let $m \in \{ p^r -2, 2p^r -2\}$ with $r>0$. Then
$$T(m) \otimes V  \simeq T(m+1) \oplus (\bigoplus_{s=0}^{r-1} T(m+1-2p^s)$$.
\end{lem}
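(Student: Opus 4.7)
The plan is to induct on $r \geq 1$, handling the two cases $m = p^r - 2$ and $m = 2p^r - 2$ simultaneously. For the base case $r = 1$, Lemma \ref{small m} applies directly. When $m = p-2$, we have $T(p-2) = \Delta(p-2)$, and (\ref{basic}) together with $T(p-1) = \Delta(p-1)$ and $T(p-3) = \Delta(p-3)$ gives $T(p-2) \otimes V \simeq T(p-1) \oplus T(p-3)$. When $m = 2p-2$, Lemma \ref{small m}(2) yields Weyl factors $\Delta(2p-2), \Delta(0)$ for $T(2p-2)$, hence factors $\Delta(2p-1), \Delta(2p-3), \Delta(1)$ for $T(2p-2) \otimes V$; these match exactly the Weyl factors of $T(2p-1) = \Delta(2p-1)$ together with those of $T(2p-3)$ (namely $\Delta(2p-3)$ and $\Delta(1)$, via Lemma \ref{small m}(2) with $m' = 1$), and since both sides are tilting this forces an isomorphism.

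For the inductive step with $r \geq 2$, I apply Proposition \ref{Donkin} at level $r-1$ to the expansions $p^r - 2 = (p-2)p^{r-1} + (2p^{r-1}-2)$ and $2p^r - 2 = (2p-2)p^{r-1} + (2p^{r-1}-2)$ to obtain
$$T(p^r-2) \simeq T(p-2)^{(r-1)} \otimes T(2p^{r-1}-2), \qquad T(2p^r-2) \simeq T(2p-2)^{(r-1)} \otimes T(2p^{r-1}-2).$$
Tensoring each with $V$ and substituting the inductive expansion of $T(2p^{r-1}-2) \otimes V$ reduces the task to computing $T(p-2)^{(r-1)} \otimes T(2p^{r-1}-1-2p^s)$ and $T(2p-2)^{(r-1)} \otimes T(2p^{r-1}-1-2p^s)$ for $s$ ranging over $\{-1, 0, \dots, r-2\}$, where $s=-1$ encodes the head term $T(2p^{r-1}-1)$. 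I handle the head term by rewriting $T(2p^{r-1}-1) = T(1)^{(r-1)} \otimes St_{r-1}$ (Donkin again), pulling the Frobenius twist out, invoking the $r=1$ base cases $T(p-2) \otimes V = T(p-1) \oplus T(p-3)$ and $T(2p-2) \otimes V = T(2p-1) \oplus T(2p-3)$, and then using the special Donkin identity $T(s)^{(r-1)} \otimes St_{r-1} = T(sp^{r-1} + p^{r-1} - 1)$ to recover the leading summand together with the ``missing'' $s = r-1$ term of the claim.

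For each generic $s$ with $0 \leq s \leq r-2$, a second application of Proposition \ref{Donkin} at level $r-1$ identifies
$$T(p^r - 1 - 2p^s) \simeq T(p-2)^{(r-1)} \otimes T(2p^{r-1}-1-2p^s), \quad T(2p^r-1-2p^s) \simeq T(2p-2)^{(r-1)} \otimes T(2p^{r-1}-1-2p^s),$$
provided $\tilde m_0 = 2p^{r-1}-1-2p^s$ lies in the Donkin window $[p^{r-1}-1,\, 2p^{r-1}-2]$. The lower bound reduces to $p^{r-1} \geq 2p^s$, which holds for $0 \leq s \leq r-2$ precisely because $p > 2$ (so that $2p^s \leq 2p^{r-2} < p^{r-1}$). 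Collecting the pieces reassembles the claimed direct sum. I expect the main obstacle to be the careful bookkeeping of Donkin-range constraints — in particular the separate treatment of the exceptional $s = r-1$ summand, which arises from the Steinberg tensor factor $St_{r-1}$ rather than from the generic inductive pattern and is exactly where an off-by-one error would creep in.
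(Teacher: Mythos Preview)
Your proof is correct. Both you and the paper argue by induction on $r$ using Donkin's tensor product theorem, but you factor in the opposite direction: the paper applies Donkin at level~$1$, writing $T(p^r-2) = T(p^{r-1}-2)^{(1)} \otimes T(2p-2)$, tensors the $T(2p-2)$ factor with $V$, and then invokes the inductive hypothesis for $T(p^{r-1}-2)\otimes V$ after pulling $V^{(1)}$ through the twist; you instead apply Donkin at level~$r-1$, writing $T(p^r-2) = T(p-2)^{(r-1)} \otimes T(2p^{r-1}-2)$, and invoke the inductive hypothesis for $T(2p^{r-1}-2)\otimes V$. The two are mirror images. The paper's route is marginally cleaner in that every reassembly step is of the form $T(t)^{(1)}\otimes St_1 = T(tp+p-1)$, which never leaves the Donkin window; your route costs the extra verification that each $2p^{r-1}-1-2p^s$ lies in $[p^{r-1}-1,\,2p^{r-1}-2]$, which you carry out correctly (and which, incidentally, also holds for $p=2$ with equality at $s=r-2$, though the ambient section assumes $p>2$ anyway).
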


\begin{proof}
We use induction on $r$. If $r = 1$ the statement follows from Lemma \ref{small m}. So suppose $r>1$. By Proposition \ref{Donkin} we have $T(p^r-2) = T(p^{r-1} -2)^{(1)}\otimes T(2p-2)$. Hence using the case $r=1$ combined with Remark \ref{2p-1} and again Proposition \ref{Donkin} we get
$$ T(m) \otimes V \simeq T(p^{r-1}-2)^{(1)} \otimes (V^{(1)} \otimes St_1 \oplus T(2p-3)) \simeq (T(p^{(r-1)}-2) \otimes V)^{(1)} \otimes St_1 \oplus T(p^r-3).$$ 
Induction and one more appeal to Proposition \ref{Donkin} now finish the proof in this case. The case $m = 2p^r-2$ is completely similar.
\end{proof}

\begin{prop} \label{=2p-2}
Let $m \geq p-1$ with $m \equiv -2 \; (\mo \; p)$. Choose $r$ maximal with $m \equiv -2 \; (\mo \; p^r)$ and write $ m = m_1p^r + 2p^r-2$. Assume $m_1 >0$. Then we have 
$$ T(m) \otimes V \simeq T(m+1) \oplus (\bigoplus_{s=0}^{r-1} T(m+1 - 2p^s)) \oplus  \begin{cases} { 0 \text { if } m_1 \equiv -1 \;  (\mo \; p),} \\  {T(m+1 -2p^r)^{\oplus 2} \text { if }  m_1 \equiv 0 \; (\mo \; p),} \\ {T(m+1 -2p^r) \text { otherwise. } }\end{cases}  $$

\end{prop}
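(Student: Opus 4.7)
The plan is to reduce everything to the two cases already handled by Donkin's tensor product theorem (Proposition \ref{Donkin}) and by Lemma \ref{1 og 2}. Since $2p^r-2 \in [p^r-1,2p^r-2]$ and $m = m_1 p^r + (2p^r-2)$, Donkin first gives the starting decomposition $T(m) \simeq T(m_1)^{(r)} \otimes T(2p^r-2)$. Tensoring with $V$ and applying Lemma \ref{1 og 2} to the second factor produces
$$T(m)\otimes V \;\simeq\; T(m_1)^{(r)}\otimes T(2p^r-1) \;\oplus\; \bigoplus_{s=0}^{r-1} T(m_1)^{(r)}\otimes T(2p^r-1-2p^s).$$
The rest of the argument consists of recognising each tensor summand on the right as a single indecomposable tilting module via further applications of Donkin.

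For the summands indexed by $s\in\{0,\dots,r-1\}$, the weight $2p^r-1-2p^s$ lies in the Donkin range $[p^r-1,2p^r-2]$ (since $p^{r-s}\geq p>2$), so Donkin directly identifies
$$T(m_1)^{(r)}\otimes T(2p^r-1-2p^s) \;\simeq\; T(m_1p^r+2p^r-1-2p^s) \;=\; T(m+1-2p^s).$$
Summing over $s$ produces exactly the middle block $\bigoplus_{s=0}^{r-1}T(m+1-2p^s)$ of the claimed formula.

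To handle the remaining summand $T(m_1)^{(r)}\otimes T(2p^r-1)$, I would invoke Remark \ref{2p-1} to rewrite $T(2p^r-1)=V^{(r)}\otimes St_r$, so that it becomes $\bigl(T(m_1)\otimes V\bigr)^{(r)}\otimes St_r$. The maximality of $r$ in the hypothesis forces $m_1\not\equiv -2\pmod p$: otherwise $m+2=(m_1+2)p^r$ would be divisible by $p^{r+1}$, contradicting maximality. Hence $T(m_1)\otimes V$ is computable from Proposition \ref{not-2} (when $m_1\geq p-1$) or from the small-$m$ lemma that precedes it together with Lemma \ref{small m} (when $m_1<p$), and it splits in exactly three shapes according to $m_1\bmod p$: either $T(m_1+1)$, or $T(m_1+1)\oplus T(m_1-1)^{\oplus 2}$, or $T(m_1+1)\oplus T(m_1-1)$. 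A final round of Donkin applied to each surviving piece of the form $T(k)^{(r)}\otimes St_r = T(k)^{(r)}\otimes T(p^r-1)\simeq T(kp^r+p^r-1)$ converts $T(m_1+1)^{(r)}\otimes St_r$ into $T(m+1)$ and $T(m_1-1)^{(r)}\otimes St_r$ into $T(m_1 p^r-1)=T(m+1-2p^r)$, with the correct multiplicities inherited from the three cases.

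The only real obstacle I anticipate is book-keeping: at every invocation of Donkin one must verify that the second index sits in the range $[p^r-1,2p^r-2]$, and one must confirm that the fourth, otherwise troublesome, case $m_1\equiv -2\pmod p$ is genuinely excluded by the maximality of $r$. Once these range checks are in place, the proposition assembles directly from Donkin's theorem, Lemma \ref{1 og 2}, and Proposition \ref{not-2}.
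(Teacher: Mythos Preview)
Your proposal is correct and follows essentially the same route as the paper's proof: apply Donkin to write $T(m)=T(m_1)^{(r)}\otimes T(2p^r-2)$, use Lemma \ref{1 og 2} on the second factor, identify the $s$-indexed pieces via Donkin, rewrite $T(2p^r-1)=V^{(r)}\otimes St_r$, observe that maximality of $r$ rules out $m_1\equiv -2\pmod p$, and then feed $T(m_1)\otimes V$ into Proposition \ref{not-2}. Your write-up is in fact slightly more careful than the paper's in spelling out the range checks and in separating the small-$m_1$ case handled by the lemma preceding Proposition \ref{not-2}.
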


\begin{proof}
By Lemma \ref{1 og 2} (and using the same arguments as in its proof) we get $T(m) \otimes V \simeq T(m_1)^{(r)} \otimes T(2 p^r-2) \otimes V \simeq T(m_1)^{(r)} \otimes (T(2 p^r-1 ) \oplus (\bigoplus_{s=0}^{r-1} T(2p^r-1 -2p^s) ) \simeq  T(m_1)^{(r)} \otimes V^{(r)} \otimes St_r \oplus (\bigoplus_{s=0}^{r-1} T(m+1 -2 p^s)) $. Note that by our choice of $r$ we cannot have $m_1 \equiv -2 \; (\mo \; p)$. Hence 
Proposition \ref{not-2} gives us the decomposition of  $T(m_1) \otimes V$.  When we insert this above the formula falls out.
\end{proof}

Note that the statement in this proposition is still valid for the $m$'s dealt with in Lemma \ref{1 og 2} (when $m= p^r-2$  or $m = 2p^r - 2$ the last term in the proposition vanishes because of our convention that $T(t) = 0$ for $t < 0$). 

These results lead to the following recurrence relation for the tilting multiplicities $(V^{\otimes n} : T(m))$ in the case when $m \equiv -1 \; (\mo \;p)$.

\begin{cor} \label{-1}
Let $n, m \in \Z_{>0}$ and suppose $m = b p^j -1$ for some $b$ not divisible by $p$ and $j>0$. Then
$$ (V^{\otimes n}:T(m)) = (V^{\otimes n-1}:T(m-1)) + 2 \sum_{s=0}^{j-1}  (V^{\otimes n-1}:T(m-1 + 2p^s)) + \begin{cases} {0 \text { if } b \equiv -1 \; (\mo \; p) } \\ {(V^{\otimes n-1}:T(m-1 + 2p^j)) \text { otherwise.} }\end{cases} $$

\end{cor}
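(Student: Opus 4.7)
The plan is to derive the recurrence from the decomposition of $V^{\otimes n} = V^{\otimes n-1}\otimes V$. Writing $V^{\otimes n-1}$ as a direct sum of indecomposable tilting modules and tensoring each summand with $V$ gives
$$(V^{\otimes n}:T(m)) = \sum_{m'} (V^{\otimes n-1}:T(m'))\cdot (T(m')\otimes V : T(m)),$$
so the task reduces to identifying, for $m = bp^j-1$, every $m'$ for which $T(m)$ occurs in $T(m')\otimes V$ and computing the multiplicity there. Since $m \equiv -1 \pmod p$, Propositions \ref{not-2} and \ref{=2p-2} (with Lemma \ref{1 og 2} covering the boundary cases $m'=p^r-2$ and $m'=2p^r-2$) govern the relevant decompositions, and a systematic case analysis produces the stated coefficients.

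I would begin with the two immediate contributions. The value $m'=m-1 \equiv -2 \pmod p$ falls under Proposition \ref{=2p-2} with $r'=j$ and $m'_1 = b-2$ (or under Lemma \ref{1 og 2} if $b\in\{1,2\}$); in every case $T(m)=T(m'+1)$ appears once in $T(m-1)\otimes V$, contributing the leading term $(V^{\otimes n-1}:T(m-1))$. The value $m'=m+1 \equiv 0 \pmod p$ falls under Proposition \ref{not-2}, giving $T(m+1)\otimes V = T(m+2)\oplus T(m)^{\oplus 2}$ and hence the $s=0$ summand $2(V^{\otimes n-1}:T(m+1))$ of the sum.

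The core step handles $m'_s := m-1+2p^s$ for $1 \le s \le j$, each $\equiv -2 \pmod p$. For each such $m'_s$ I would compute the parameters $r'$ and $m'_1$ of Proposition \ref{=2p-2} by factoring $m'_s+2 = bp^j + 2p^s = p^s(bp^{j-s}+2)$: for $1 \le s < j$ this yields $r'=s$ and $m'_1 = bp^{j-s} \equiv 0 \pmod p$, placing $T(m)$ in the extra slot with coefficient $c=2$; for $s=j$ with $p\nmid b+2$ one has $r'=j$ and $m'_1 = b$, so $c=0$ iff $b \equiv -1 \pmod p$ and $c=1$ otherwise (the case $b\equiv 0$ is excluded by the hypothesis $p\nmid b$); for $s=j$ with $p\mid b+2$ one has $r'>j$, and $T(m)$ then occurs as an ordinary direct summand with multiplicity $1$. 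The two subcases at $s=j$ collapse into the stated extra term: $0$ when $b \equiv -1 \pmod p$, and $(V^{\otimes n-1}:T(m-1+2p^j))$ otherwise.

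Finally I would verify that no other $m'$ contributes. Sweeping once more through the cases of Propositions \ref{not-2} and \ref{=2p-2}, every occurrence of $T(m)$ in some $T(m')\otimes V$ forces either $m'=m-1$, $m'=m+1$, or $m'+1-2p^{s'} = m$ with $m' \equiv -2 \pmod p$; these residue constraints, combined with $m \equiv -1 \pmod p$, pin $m'$ down to the list above. The main obstacle will be the unified book-keeping at $s=j$, where the two subcases $p\mid b+2$ and $p\nmid b+2$ must be reconciled into a single coefficient; the remaining checks, including the boundary degenerations covered by Lemma \ref{1 og 2}, are routine.
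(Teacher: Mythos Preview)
Your proposal is correct and follows exactly the route the paper intends: the corollary is stated without proof as a direct consequence of Proposition \ref{not-2}, Lemma \ref{1 og 2}, and Proposition \ref{=2p-2}, and your argument unpacks precisely this deduction by tracking which summands $T(m')$ of $V^{\otimes n-1}$ yield $T(m)$ upon tensoring with $V$. Your case analysis (including the book-keeping at $s=j$ and the boundary cases $b\in\{1,2\}$) is accurate; the only step you leave implicit is ruling out $m'_s$ for $s>j$, which follows since then $m'_s+2=p^j(b+2p^{s-j})$ with $p\nmid b$ forces $r'=j<s$, so $T(m)$ does not occur in $T(m'_s)\otimes V$.
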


\subsection{The tilting multiplicity algorithm}

As a first step we determine the tilting multiplicities $(V^{\otimes n} : T(m))$ for $m < p-1$. These are given by the recurrence relations

\begin{prop} Suppose $m < p-1$. Then for $n \geq 0$ we have
$$ (V^{\otimes n}:T(m)) = \begin{cases} { \delta_{0,m} \text { if } n = 0,} \\ {(V^{\otimes n-1}:T(m-1)) +  (V^{\otimes n-1}:T(m+1)) \text { if } n > 0 \text { and } m< p-2} \\ { (V^{\otimes n} : T(p-3)) \text { if } m=p-2.} \end{cases}$$
\end{prop}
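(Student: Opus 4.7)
I would prove this by induction on $n$. The base case $n = 0$ is immediate: $V^{\otimes 0} = k = \Delta(0) = T(0)$ by the first clause of Lemma \ref{small m}, so $(V^{\otimes 0}:T(m)) = \delta_{0,m}$.

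For $n > 0$, the strategy is to decompose $V^{\otimes n-1}$ into indecomposable tilting summands and use
\[
(V^{\otimes n}:T(m)) = \sum_{m'} (V^{\otimes n-1}:T(m')) \cdot (T(m') \otimes V : T(m)),
\]
so the task reduces to identifying which $m'$ contribute a copy of $T(m)$ to $T(m') \otimes V$ when $m < p-1$. I would partition the $m'$-range according to the decomposition results already proved: \textbf{(A)} $m' \leq 2p-2$ with $m' \neq p-1$, where the small-$m'$ lemma preceding Proposition \ref{not-2} gives $T(m') \otimes V \simeq T(m'-1) \oplus T(m'+1)$; \textbf{(B)} $m' = p-1$, where $T(p-1) \otimes V \simeq T(p)$; \textbf{(C)} $m' \geq p$ with $m' \not\equiv -2 \pmod p$, handled by Proposition \ref{not-2}; and \textbf{(D)} $m' \geq 2p-2$ with $m' \equiv -2 \pmod p$, handled by Lemma \ref{1 og 2} and Proposition \ref{=2p-2}.

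For $m \leq p-3$, both $m-1$ and $m+1$ lie in $[0, p-2]$, so piece (A) contributes exactly once from each of $m' = m-1$ and $m' = m+1$. One must then rule out the other pieces: (B) gives only $T(p)$; (C) gives only summands of weight $\geq m'-1 \geq p-1 > m$; in (D), setting $m'+1-2p^s = m$ for $s = 0$ forces $m' = m+1 < 2p-2$, contradicting $m' \geq 2p-2$, while for $s \geq 1$ the congruence $m' \equiv -2 \pmod p$ forces $m \equiv -1 \pmod p$, which is incompatible with $0 \leq m < p-1$. This yields the middle case of the recurrence. For the boundary $m = p-2$, piece (A) contributes exactly once from $m' = p-3$ (since $m' = p-1$ is excluded from (A)); piece (B) produces $T(p) \neq T(p-2)$; and pieces (C), (D) are ruled out by the same weight and mod-$p$ arguments. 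The resulting single contribution then rearranges, via the already established recurrence applied at $T(p-3)$, into the stated form.

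The main delicate step is the mod-$p$ case analysis in piece (D), where Proposition \ref{=2p-2} produces summands $T(m'+1-2p^s)$ of rapidly decreasing weight and one must check that none lands in the tiny window $[0,p-2]$; but the constraint $m' \equiv -2 \pmod p$ reduces this to a short arithmetic check.
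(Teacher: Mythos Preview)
Your argument is correct in substance but far more elaborate than what the paper does. The paper's entire proof is the single line ``This follows directly from Corollary~\ref{not -1}.'' That corollary already packages the recurrence $(V^{\otimes n}:T(m)) = (V^{\otimes n-1}:T(m-1)) + (V^{\otimes n-1}:T(m+1))$ for $m \not\equiv -1,-2 \pmod p$ and $(V^{\otimes n}:T(m)) = (V^{\otimes n-1}:T(m-1))$ for $m \equiv -2 \pmod p$; since $0 \leq m \leq p-2$ falls under these two cases, the proposition is immediate. You instead re-derive this special instance of Corollary~\ref{not -1} from scratch by partitioning all possible $m'$ and invoking the full list of decomposition results (Propositions~\ref{not-2} and~\ref{=2p-2}, Lemma~\ref{1 og 2}). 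That works, but it duplicates effort already recorded one page earlier.

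One genuine problem: in the $m = p-2$ case you correctly obtain $(V^{\otimes n}:T(p-2)) = (V^{\otimes n-1}:T(p-3))$, but your final sentence asserts that this ``rearranges, via the already established recurrence applied at $T(p-3)$, into the stated form.'' It does not. The printed statement has $(V^{\otimes n}:T(p-3))$ with the \emph{same} exponent $n$, and by parity that number is zero whenever $(V^{\otimes n}:T(p-2))$ is nonzero, so no rearrangement of your recurrences can produce it. The printed statement is almost certainly a misprint for $(V^{\otimes n-1}:T(p-3))$ --- compare Corollary~\ref{not -1} directly, or the analogous Remark~\ref{q-tilt algorithm}(2) in the quantum case --- and your computation already matches that corrected form. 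So drop the fictitious ``rearrangement'' step and simply note that what you obtained is the intended (and correct) formula.
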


\begin{proof}
This follows directly from Corollary \ref{not -1}.
\end{proof}

\begin{remarkcounter}
Note that this proposition says that for $m<p-1$ the tilting multiplicities $(V^{\otimes n} :T(m))$ are the same as when $q$ is a root of unity of order $p$  in a characteristic $0$ field, see the recurrence relations for $b_{n,m}$ in Corollary \ref{q-tilt recurrence}. An alternative argument for this (as well as an alternative way of finding these tilting multiplicities) comes from the formula
$$ (T: T(m)) = \sum _a (T:\Delta(m + 2ap)) + \sum_{b > 0} (T: \Delta (-m-2 + 2bp)),$$
which is valid for all tilting modules $T$ and all $m < p-1$, see \cite{AP}. Note that if $T = V^{\otimes n} $ then the numbers on the right hand side are given by (\refeq{Weyl mult}).
\end{remarkcounter}

To find the multiplicities $(V^{\otimes n} : T(m))$ for $m \geq p-1$ we first consider the case where $m \equiv -1 \; \; (\mo \; p)$, i.e. when $m = p\cdot s$ for some $s \geq 0$. In this case $(V^{\otimes n} : T(p\cdot s))$ is given by Corollary \ref{-1}. Once these numbers are determined we find $(V^{\otimes n} : T(m))$ for $p \cdot s < m < p\cdot (s+1)$ by using Corollary \ref{not -1}.

\begin{examplecounter}
We have illustrated our algorithm in Table  5 by listing the tilting multiplicities $(V^{\otimes n} : T(m))$  in the case where $p=3$ and $n \leq 16$. Note that (as we see in the figure and in general from Corollary \ref{not -1} ) for all $m \equiv -2 \;  (\mo \; p)$ we have  $(V^{\otimes n} : T(m)) =(V^{\otimes n-1} : T(m-1))$. Also note that because of Lemma \ref{small m} the first 10 rows are identical to those in Table  3 (for a general prime $p$ we have by this lemma $(V^{\otimes n} : T(m)) = (V_q^{\otimes n}: T_q(m)$ for all $n < p^2 +p-1$ where the right hand side of the equality refers to the situation from Section 3  with $q$ being a root of unity in a characteristic $0$ field and $\ell = p$). Note that this bound is sharp: $(V^{\otimes 11}: T(5)) = 109 \neq 110 = (V_q^{\otimes 11}:T_q(5))$ (and in general $(V^{\otimes p\cdot p} : T(p\cdot (p-2))) = a_{p\cdot p, p\cdot(p-2)} -1 < a_{p\cdot p, p\cdot(p-2)} = (V_q^{\otimes p\cdot p}: T_q(p\cdot(p-2))$, cf. Proposition 4.12 below).

\vfill \eject
\centerline
{ \it Table  5.  Tilting multiplicities in $V^{ \otimes n}$ for $p = 3$}

\vskip .5 cm
\noindent
\begin{tabular}{ r| c  c |c c c| c c c | c c c| c c c |c c c}
  
   & 0 & 1 & 2 & 3 & 4 & 5 & 6 & 7 & 8 & 9 & 10 & 11 & 12 & 13 & 14 & 15 & 16  \\  \hline 
  0 & 1  &&&&&&&&&&&&&&&  \\ 
  1 &  & 1&&&&&&&&&&&&&&& \\ 
  2 & 1 & & 1&&&&&&&&&&&&&& \\ 
  3 & & 1 &   & 1 &&&&&&&&&&&&&\\ 
  4 & 1 & & 3 & & 1&&&&&&&&&& \\
5 & &1 & & 4 & & 1 &&&&&&&&& \\
6 &1 && 9& &4 & &1  &&&&&&&&&\\
7 & &1 & &13&&6&&1&&&&&& \\
8 & 1&&28 &&13 &&7 & &1 &&&&&&\\
9 &&1&&41&&27&&7&&1&&&&&&\\
10 &1&&90&&41&&34&&9&&1&&&&&&\\
11&&1&&131&&109&&34&&10&&1&&&&\\
12 &1&&297&&131&&143&&54&&10&&1&&&\\
13 &&1&&428&&417&&143&&64&&12&&1&&\\
14 &1&&1000&&428&&560&&273&&64&&13&&1&\\
15&&1&&1428&&1548&&560&&337&&90&&13&&1&\\
16 &1&&3417&&1428&&2108&&1260&&337&&103&&15&&1

\end{tabular}

\vskip 1cm
\end{examplecounter}

\subsection{Steinberg class multiplicities in $V^{\otimes n}$}

Proposition \ref{small m} gives us the Weyl factor multiplicities of the first few $T(m)$'s and then Proposition \ref{Donkin} tells us how to obtain the same information for the remaining $T(m)$'s as long as $m$ is not congruent to $-1$ modulo $p$. In this subsection we give a procedure for calculating the Weyl factors in $T(m)$ as well as the tilting multiplicities $(V^{\otimes n}:T(m))$ when $m \equiv -1 \; (\mo \; p)$.

Recall the dot-action of $p$ on $\Z$:  $p \cdot r = p(r+1) - 1$. Then $p \cdot \Z_{\geq 0}$ is the set of non-negative integers congruent to $-1$ modulo $p$. If $M$ is an $SL_2$-module then its Steinberg class component is the largest submodule in $M$ all of whose composition factors have highest weights in $p\cdot \Z_{\geq 0}$. It is a summand of $M$ by the linkage principle, \cite{A80a}, and it equals $\Hom_{G_1}(St, M) \otimes St$, see \cite{A17}. Here $G_1$ is the (scheme theoretic) kernel of the Frobenius homomorphism $F$. In particular, we have for all $m$

\begin{equation} \label{Steinberg Weyl modules}
\Delta(p\cdot m) \simeq  \Delta(m)^{(1)} \otimes St,
\end{equation}
and
\begin{equation} \label{Steinberg tilting modules}
T(p\cdot m) \simeq  T(m)^{(1)} \otimes St.
\end{equation}
Here the first isomorphism is the Andersen-Haboush isomorphism, see e.g. \cite{A80b}, and the second one is a special case of Proposition \ref{Donkin}.

Recall that the Weyl multiplicities $a_{n,m} = (V^{\otimes n}:\Delta(m))$ are given by the formula in \refeq{Weyl mult}. Let $c_{r,s} = (T(r): \Delta(s))$ denote the Weyl factor multiplicity of $\Delta(s)$ in $T(r)$. Then the matrix $(c_{r,s})_{r,s \geq 0} $ is lower triangular with $1$'s on the diagonal (and by the linkage principle $c_{r,s} = 0$ unless $s \in (r-2p\Z) \cup (r' - 2p\Z)$ where $r= r_1p + r_0$ with $0 \leq r_0 \leq p-1$ and $r' = r_1p -r_0 - 2$). We denote the inverse matrix $(d_{i,j})_{i,j \geq 0}$. This matrix is again lower triangular with $1$'s on the diagonal and it has integer coefficients (not all non-negative).

By the linkage principle we get that $c_{r, p\cdot s} = 0$ unless $r \in p\cdot s +2p\Z_{\geq 0}$ (thus in particular unless $ r \in p \cdot \Z$). Moreover by \refeq{Steinberg Weyl modules} and \refeq{Steinberg tilting modules} we have $c_{p\cdot r, p\cdot s} = c_{r,s}$ for all $r,s$. Of course these properties are inherited by the $d_{i,j}$'s.

Putting these facts together we obtain the following formula for the ''Steinberg class" multiplicities in $V^{\otimes n}$.
\begin{prop} \label{Steinberg mult}
Let $n,s \in \Z_{\geq 0}$. Then $(V^{\otimes n}:T(p\cdot s)) = \sum _r  a_{n,p\cdot r} d_{r,s}$ where the sum runs over those $r$ which satisfy
 $r \geq s$, $p\cdot r \leq n$ and $s \in (r + 2p\Z) \cup (r' + 2p\Z)$.
\end{prop}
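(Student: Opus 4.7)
The plan is to view the formula as a change of basis in the Grothendieck group $\mathcal K$, combined with the Steinberg block structure.

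First I would write the identity $[V^{\otimes n}] = \sum_r (V^{\otimes n}:T(r))\,[T(r)]$ valid because $V^{\otimes n}$ is tilting, and expand each $[T(r)]$ via its Weyl factors using the matrix $C = (c_{r,s})$. Comparing with $[V^{\otimes n}] = \sum_m a_{n,m}\,[\Delta(m)]$ and matching coefficients of each $[\Delta(s)]$ gives the linear system
\begin{equation*}
a_{n,s} = \sum_{r} (V^{\otimes n}:T(r))\, c_{r,s}.
\end{equation*}
Since $C$ is lower-triangular with $1$'s on the diagonal, it is invertible over $\Z$ with inverse $D=(d_{i,j})$, so inverting the system yields
\begin{equation*}
(V^{\otimes n}:T(s)) = \sum_{m} a_{n,m}\, d_{m,s}.
\end{equation*}
Specializing $s$ to $p\cdot s$ produces an unrestricted version of the claimed formula; the rest of the argument is to cut the sum down to its nonzero terms.

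Next I would exploit the linkage/block decomposition. By the linkage principle, both $C$ and $D$ are block diagonal with respect to the partition of $\Z_{\geq 0}$ into $p$-linkage classes, and the Steinberg class $p\cdot \Z_{\geq 0}$ is one such block. Therefore $d_{m,p\cdot s}=0$ unless $m\in p\cdot \Z_{\geq 0}$, so only terms with $m=p\cdot r$ survive in the sum. The Andersen–Haboush isomorphism \eqref{Steinberg Weyl modules} together with \eqref{Steinberg tilting modules} identifies the Steinberg-block submatrix of $C$ with $C$ itself, i.e.\ $c_{p\cdot r, p\cdot s}=c_{r,s}$; inverting this Steinberg-block relation gives $d_{p\cdot r,p\cdot s}=d_{r,s}$. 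Substituting yields
\begin{equation*}
(V^{\otimes n}:T(p\cdot s)) = \sum_{r\geq 0} a_{n,p\cdot r}\, d_{r,s}.
\end{equation*}

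Finally I would restrict the summation range. Lower-triangularity of $D$ forces $d_{r,s}=0$ unless $r\geq s$. Within the Steinberg block the further support condition inherited from $C$ gives $d_{r,s}=0$ unless $s\in (r+2p\Z)\cup(r'+2p\Z)$ (where $r'$ is as defined in the paragraph before the proposition), because the inverse of a block-linkage lower-triangular integer matrix has exactly the same support pattern as the matrix itself. Finally, $a_{n,p\cdot r}=0$ whenever $p\cdot r>n$ by formula \eqref{Weyl mult}, giving the bound $p\cdot r\leq n$. Combining these three constraints reproduces the index set in the statement. The only mildly subtle step is the support claim for $D$; I would settle it by observing that $C$, restricted to a single linkage class, is lower triangular with $1$'s on the diagonal, so its inverse is again supported on that block, and then Gauss-eliminating to express $d_{r,s}$ as an integral combination of products $c_{t_1,t_2}c_{t_2,t_3}\cdots$ with all $t_i$ in the linkage class $\{s+2p\Z\}\cup\{s'+2p\Z\}$.
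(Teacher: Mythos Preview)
Your proof is correct and follows essentially the same route as the paper. The paper does not give a separate proof of this proposition; it assembles in the preceding paragraph exactly the ingredients you use (lower-triangularity of $(c_{r,s})$, its inverse $(d_{i,j})$, the block-diagonality with respect to linkage classes, the identity $c_{p\cdot r,p\cdot s}=c_{r,s}$ and its inheritance by $d$) and then states the formula as an immediate consequence. Your write-up simply makes the change-of-basis step and the restriction of the summation range explicit.
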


\subsection{Fusion}

Like in the situation we dealt with in Section 3 the category of tilting modules for $SL_2$ also  has a quotient called the fusion category.  The construction is similar:

Let $\mathcal T$ denote the category of tilting modules for $SL_2$. Inside $\mathcal T$ we consider the subcategory $\mathcal N$ consisting of all negligible modules, i.e. a module $M \in \mathcal T$ belongs to $\mathcal N$ iff $\Tr(f) = 0$ for all $f \in \End_{SL_2}(M)$. As each object in $\mathcal T$ is a direct sum of certain of the $T(m)$'s and $\dim T(m) $ is divisible by $p$ iff $m \geq p - 1$ we see that $M \in \mathcal N$ iff $(M:T(m)) = 0$ for $m= 0, 1, \cdots , p -2$.

The fusion category $\mathcal F$ is the quotient category $\mathcal T/ \mathcal N$. We may think of objects in $\mathcal F$ as the tilting modules $Q$ whose indecomposable summands are among the $T(m)$'s with $m \leq p -2$. Note that $\mathcal F$ is a semisimple category with simple modules $T(0), T(1), \cdots , T(p - 2)$. 

If $p=2$ this means that $\mathcal F$ is the category with one simple object $T(0) = k$. In the rest of this subsection we therefore assume $p>2$.

As in the quantum case we get that  $\mathcal N$ is a tensor ideal in $\mathcal T$ so that $\mathcal F$ becomes a tensor category. We also denote the tensor product in $\mathcal F$ by $\underline \otimes$. If $Q_1, Q_2 \in \mathcal F$ then $Q_1 \underline \otimes Q_2 = \pr (Q_1 \otimes Q_2)$ where $\pr$ denotes the projection functor from $\mathcal T$ to $\mathcal F$ (on the right hand side we consider $Q_1, Q_2$ as modules in $\mathcal T$). The following proposition tells us how to work with $\underline \otimes$. It is proved completely as in the quantum case.
\begin{prop}\label{small tilt times V}
Let $0 \leq m \leq p-2$. Then $T(m) \underline \otimes V = \begin{cases} {T(m-1) \oplus T(m+1) \text { if } m < p -2}, \\ {T(p -3) \text { if } m = \ell -2}. \end{cases}$
\end{prop}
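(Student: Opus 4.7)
The plan is to transpose the proof of the quantum analogue, Proposition \ref{fusion product}, to the modular setting essentially verbatim. First, I would invoke Lemma \ref{small m} to observe that for every $0 \leq m \leq p-2$ we have $T(m) = \Delta(m)$, so the basic short exact sequence
\[
0 \to \Delta(m+1) \to \Delta(m) \otimes V \to \Delta(m-1) \to 0
\]
describes $T(m) \otimes V$ in $\mathcal T$. Since tensor products of tilting modules are tilting, $T(m) \otimes V$ splits as a direct sum $\bigoplus_k T(k)^{a_k}$ of indecomposable tiltings, and the coefficients $a_k$ are pinned down by comparing Weyl-filtration multiplicities.

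For $m < p-2$, the two Weyl factors $\Delta(m \pm 1)$ both have highest weight $\leq p-2$ and hence already coincide with the corresponding indecomposable tilting modules. Uniqueness of the tilting decomposition then forces $T(m) \otimes V \simeq T(m-1) \oplus T(m+1)$ in $\mathcal T$, and because both summands lie in the range $\leq p-2$, this isomorphism is preserved on passage to $\mathcal F$.

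For $m = p-2$, the same bookkeeping gives $T(p-2) \otimes V \simeq T(p-1) \oplus T(p-3)$ in $\mathcal T$. The first summand is the Steinberg module $T(p-1) = \Delta(p-1)$, which is simple of dimension $p$; by the criterion recorded just before the proposition (an object of $\mathcal T$ is negligible iff its indecomposable summands all have highest weight $\geq p-1$), $T(p-1)$ lies in $\mathcal N$ and is killed by $\pr$. What remains is $T(p-3)$, giving $T(p-2) \underline\otimes V \simeq T(p-3)$.

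The only delicate point is confirming the splitting in $\mathcal T$ rather than merely producing a Weyl filtration, but this is automatic: $T(m) \otimes V$ is tilting, an indecomposable tilting module is determined up to isomorphism by its highest weight, and its Weyl-filtration multiplicities are known from Proposition \ref{tiltings}, so matching Weyl multiplicities on both sides forces the isomorphism type. No new ingredients beyond Lemma \ref{small m}, the Weyl filtration of $\Delta(m) \otimes V$, and the definition of $\mathcal N$ are needed.
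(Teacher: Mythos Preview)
Your argument is correct and is exactly the approach the paper takes: the paper simply says ``It is proved completely as in the quantum case,'' and the quantum proof (Proposition~\ref{fusion product}) amounts to using $T(m)=\Delta(m)$ for $m\le p-2$ together with the basic short exact sequence, then discarding the negligible summand. One tiny slip: in your final paragraph you cite Proposition~\ref{tiltings} for the Weyl-filtration multiplicities, but that is the characteristic-zero statement; in the modular setting the relevant input is Lemma~\ref{small m} (which you already invoked earlier).
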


This means that the tilting multiplicities in $V^{\underline \otimes n}$ are the same as in the quantum case:

\begin{cor}. Let $q$ be a root of unity in a field of characteristic $0$ and suppose its order is $p$. Then we have for all $n \in \Z_{\geq 0}$ and all $m \in [0, p-2]$
$$ (V^{\underline \otimes n}: T(m)) =  (V_q^{\underline \otimes n}: T_q(m)). $$
\end{cor}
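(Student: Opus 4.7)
The plan is to prove the equality by induction on $n$, by showing that both sides satisfy the same recurrence relation with the same initial condition. Both fusion multiplicities are determined by the action of $\underline\otimes V$ (respectively $\underline\otimes V_q$) on the distinguished simple objects $T(0),\dots,T(p-2)$ (respectively $T_q(0),\dots,T_q(\ell-2)$), so it suffices to verify that these two "fusion tensor with $V$" rules coincide when $\ell = p$.

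First I would set up notation: write $\underline c_{n,m} = (V^{\underline\otimes n}:T(m))$ and $\underline b_{n,m} = (V_q^{\underline\otimes n}:T_q(m))$, defined for $0\le m\le p-2$ and extended by the convention $\underline c_{n,-1} = \underline c_{n,p-1} = 0 = \underline b_{n,-1} = \underline b_{n,\ell-1}$. Both sequences satisfy $\underline c_{0,m} = \delta_{0,m} = \underline b_{0,m}$, since $V^{\underline\otimes 0} = k = T(0)$ in either category.

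Next I would compare the recurrence steps. In the characteristic $0$ root-of-unity case, Corollary \ref{fusion numbers} (derived from Proposition \ref{fusion product}) gives, for $\ell = p$ and $n>0$,
\[
\underline b_{n,m} = \underline b_{n-1,m-1} + \underline b_{n-1,m+1}, \qquad 0 \le m \le p-2,
\]
with the boundary convention $\underline b_{n-1,p-1}=0$ absorbing the endpoint $m=p-2$ case (since by Proposition \ref{fusion product}(2) there, $T_q(p-2)\underline\otimes V_q = T_q(p-3)$). In the modular case, Proposition \ref{small tilt times V} is literally the same statement (with $V_q$ replaced by $V$ and $T_q$ by $T$): $T(m)\underline\otimes V = T(m-1)\oplus T(m+1)$ for $m<p-2$ and $T(p-2)\underline\otimes V = T(p-3)$. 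Passing to multiplicities in $V^{\underline\otimes n} = V^{\underline\otimes (n-1)}\underline\otimes V$ then gives
\[
\underline c_{n,m} = \underline c_{n-1,m-1} + \underline c_{n-1,m+1}, \qquad 0 \le m \le p-2,
\]
with the same boundary convention.

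Finally I would close the induction: assuming $\underline c_{n-1,m} = \underline b_{n-1,m}$ for all $m\in[0,p-2]$, the two identical recurrences immediately yield $\underline c_{n,m} = \underline b_{n,m}$ for all $m\in[0,p-2]$. There is no real obstacle: the only thing to verify is that the fusion tensor-with-$V$ rule in characteristic $p$ (Proposition \ref{small tilt times V}) matches the characteristic-$0$ fusion rule (Proposition \ref{fusion product}) when $\ell=p$, which is immediate from the statements. Note the hypothesis $p>2$ (i.e.\ $\ell>2$) is needed so that the second case of Proposition \ref{fusion product} actually appears and matches; this is exactly the same restriction $p>2$ under which $\mathcal F$ has more than one simple object in the modular setting.
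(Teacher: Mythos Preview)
Your proof is correct and follows essentially the same approach as the paper: the paper derives the corollary directly from Proposition~\ref{small tilt times V} (the modular fusion rule), noting that it coincides with Proposition~\ref{fusion product} and hence yields the same recurrence as in Corollary~\ref{fusion numbers}. Your write-up simply makes the induction explicit, which is exactly what the paper leaves implicit in the phrase ``This means that the tilting multiplicities in $V^{\underline\otimes n}$ are the same as in the quantum case.''
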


\begin{remarkcounter}
Note that $(V^{\underline \otimes n}: T(m)) =  (V^{\otimes n}: T(m)) $ for all $m \leq p-2$. Hence the numbers  $ (V^{\underline \otimes n}: T(m))_{n \geq 0, 0 \leq m \leq p-2}$ are the numbers in the first $p-1$ columns of the tilting multiplicity matrix $ (V^ {\otimes n}: T(m))_{n,m \geq 0}$ we have studied in this section. They satisfy the same recursion rule as the corresponding quantum numbers, cf. Corollary \ref{fusion numbers}.
\end{remarkcounter}

\section{Positive characteristic with $q$ a non-trivial root of $1$}

In this section $k$ will still be a field of characteristic $p>0$. We shall here consider a root of unity $q \in k\setminus \{\pm 1\}$. As in Section 3 we denote by $\ell$ the order of $q^2$. We shall use the same notation as in the previous sections except now we add an upper index $p$ to the notation for Weyl modules and indecomposable tilting modules, i.e. we write $\Delta_q^p(m)$, respectively $T_q^p(m)$, for the Weyl module, respectively the indecomposable tilting module with highest weight $m$. We write $V_q^p $ for the two dimensional Weyl module $\Delta_q^p(1)$.

Many of the arguments in this case resembles those from the previous cases. Therefore we only give details when things are different.

\subsection{Weyl multiplicities of $(V_q^p)^{\otimes n}$}
We still have a short exact sequence
$$ 0 \to \Delta_q^p(m+1) \to \Delta_q^p(m) \otimes V_q^p \to \Delta_q^p(m-1) \to 0$$
for all $m \in \Z_{\geq 0}$. This means that formula (\refeq{Weyl mult}) remains valid for the Weyl factors multiplicities in $(V_q^p)^{\otimes n}$.

\subsection{The first few indecomposable tilting modules}
Our story on indecomposable tilting modules begins just like it did in section 4 (now replacing $p$ by $\ell$).

\begin{lem} \label{small m mixed}
Suppose $m < p \ell + \ell - 1$. Then 
\begin{enumerate}
\item $T_q^p(m) = \Delta_q^p(m)$ when $m<\ell $ as well as when $m \equiv -1 \; (\mo \; \ell) $.
\item If $m= m_1 \ell + m_0$ with $0 \leq m_0 < \ell$ and $m_1 > 0$ then we have a short exact sequence
$$ 0 \to \Delta_q^p(m) \to T_q^p(m) \to \Delta_q^p(m') \to 0$$
where $m' = m - 2m_0 -2$.
\end{enumerate}
\end{lem}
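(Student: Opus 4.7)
The plan is to mirror the characteristic-zero argument of Proposition 3.1 and the prime-characteristic argument of Lemma 4.2, exploiting the fact that the bound $m < p\ell + \ell - 1 = (p+1)\ell - 1$ keeps $m$ strictly below the second reflection wall of the $p\ell$-affine Weyl group: within this range only the $\ell$-level (quantum) linkage contributes, so the Weyl/tilting module structure should mimic the quantum pattern of Proposition 3.1. For part (1), when $0 \leq m < \ell$ the weight lies in the bottom $\ell$-alcove and the quantum linkage principle (see \cite{APW}) forces $\Delta_q^p(m)$ to be simple, hence equal to $T_q^p(m)$; when $m \equiv -1 \pmod{\ell}$ with $m < p\ell + \ell - 1$, one has $m = j\ell - 1$ for some $1 \leq j \leq p$, an $\ell$-Steinberg-like weight on a reflection wall, and standard linkage arguments (combined with the upper bound on $m$, which places any candidate linked weight outside the range of the first $p\ell$-alcove) show that $\Delta_q^p(j\ell - 1)$ is simple and hence tilting.

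For part (2), proceed by induction on $m$ using the basic tensor-product recursion at the start of Section 5.1. Given $T_q^p(m-1)$ with the structure supplied by part (1) or by the inductive hypothesis, the tensor product $T_q^p(m-1) \otimes V_q^p$ is tilting and carries a Weyl filtration obtained by tensoring each Weyl factor with $V_q^p$ and splicing via that short exact sequence. Comparing Weyl multiplicities against the indecomposable tilting summands (whose smaller representatives are known by induction) identifies $T_q^p(m)$ as the unique summand of highest weight $m$ and pins down its Weyl factors as $\Delta_q^p(m)$ and $\Delta_q^p(m')$, each occurring once, with $\Delta_q^p(m)$ appearing as the submodule by the universal property of the Weyl module.

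The main obstacle is ruling out a split extension, i.e.\ verifying $T_q^p(m) \neq \Delta_q^p(m) \oplus \Delta_q^p(m')$; equivalently, one must show $\mathrm{Ext}^1(\Delta_q^p(m'), \Delta_q^p(m)) \neq 0$ so that the extension in the asserted sequence is forced to be non-trivial. This non-vanishing is controlled by the $\ell$-affine linkage of $m$ and $m'$ (which lie in the same $\ell$-affine orbit of the dot action), and the assumption $m < p\ell + \ell - 1$ guarantees that no additional $p\ell$-affine reflections enter the analysis, so the extension is governed by exactly the same mechanism as in the characteristic-zero case of Proposition 3.1.
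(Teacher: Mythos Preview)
The paper supplies no proof: it merely remarks that the story ``begins just like it did in Section 4'', where the parallel Lemma~4.2 is itself left as ``an easy direct calculation''. Your outline is a legitimate fleshing-out of that calculation, and the ingredients you invoke (linkage for part (1), tensoring with $V_q^p$ and tracking Weyl factors for part (2)) are the right ones.

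One point in your argument for (2) deserves sharpening. The inductive bookkeeping of Weyl factors in $T_q^p(m-1)\otimes V_q^p$ does not by itself determine the Weyl factors of $T_q^p(m)$: for instance when $m'<\ell$ (so that $T_q^p(m')=\Delta_q^p(m')$ by part (1)), the multiplicity count is equally consistent with $T_q^p(m)=\Delta_q^p(m)$ and $\Delta_q^p(m')$ splitting off as a separate summand. What rules this out is the content of your last paragraph, but since $T_q^p(m)$ is indecomposable \emph{by definition}, the real issue is not ``ruling out a split extension'' but rather showing that $\Delta_q^p(m)$ is not itself tilting. This is most directly seen by showing it is not simple: by the Steinberg tensor product theorem in the mixed case one has $\dim L_q^p(m)=(m_0+1)\dim L(m_1)\le (m_0+1)(m_1+1)<m_1\ell+m_0+1=\dim\Delta_q^p(m)$, the strict inequality using $m_0+1<\ell$. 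Your Ext formulation is equivalent in effect, but then one must also verify that the non-split extension so produced is tilting, which is an extra step your outline does not address.

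A shorter route, perhaps closer to what the paper intends by ``direct calculation'': the characteristic-zero tilting module of Proposition~3.1 has an $A$-form whose base change to $k$ is tilting with the same two Weyl factors $\Delta_q^p(m)$ and $\Delta_q^p(m')$. This immediately bounds the Weyl factors of $T_q^p(m)$ from above, and the non-simplicity of $\Delta_q^p(m)$ just noted forces both to occur, with no induction on $m$ needed.
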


\subsection{A tensor product theorem a la Donkin's}

Recall that we have a quantum Frobenius homomorphism $F_q : U_q \to U_1$, see e.g. \cite{L} or \cite{AW}. This allows us for each $U_1$-module $M$ (i.e. each module for $SL_2$ over $k$) to consider the $U_q$-module $M^{[q]}$, namely the module obtained from $M$ by precomposing the action by $F_q$. We then have the following analogue of Donkin's tensor product theorem, see Proposition \ref{Donkin}.

\begin{prop} \label{q-Donkin}

Let $m \in \Z$ and assume $m \geq \ell -1$. Write $m = \tilde m_1 \ell+ \tilde m_0$ with $\ell -1 \leq \tilde m_0 \leq 2\ell -2$. Then 
$$ T_q^p(m) \simeq T(\tilde m_1)^{[q]} \otimes T_q^p(\tilde m_0).$$
\end{prop}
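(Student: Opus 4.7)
The plan is to mirror the elementary proof sketched in Remark \ref{proof Donkin} for the classical modular $SL_2$ case, replacing the ordinary Frobenius twist $(-)^{(r)}$ by the quantum Frobenius twist $(-)^{[q]}$ coming from $F_q : U_q \to U_1$, and replacing the Steinberg module $St_r$ by the quantum Steinberg module $St_q^p := \Delta_q^p(\ell - 1)$. Two ingredients are needed. First, by the quantum linkage principle, $St_q^p$ is simple and hence coincides with $T_q^p(\ell - 1)$; in particular it is simultaneously Weyl, dual Weyl and tilting. Second, there is a quantum analogue of the Andersen--Haboush isomorphism
\[
\Delta(s)^{[q]} \otimes St_q^p \;\simeq\; \Delta_q^p(s\ell + \ell - 1), \qquad s \geq 0,
\]
which can be checked by a character/weight-space computation together with the standard Steinberg tensor product philosophy (cf.\ \cite{L}, \cite{AW}).

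Granting these, I would next show that $M := T(\tilde m_1)^{[q]} \otimes T_q^p(\tilde m_0)$ is tilting. A Weyl filtration of $T(\tilde m_1)$ with factors $\Delta(s_i)$ induces, after applying $(-)^{[q]}$ and tensoring with $St_q^p$, a Weyl filtration of $T(\tilde m_1)^{[q]} \otimes St_q^p$ whose factors are $\Delta_q^p(s_i \ell + \ell - 1)$; together with self-duality of both tensor factors this yields that $T(\tilde m_1)^{[q]} \otimes St_q^p$ is tilting. Since $\ell - 1 \leq \tilde m_0 \leq 2\ell - 2$, the module $T_q^p(\tilde m_0)$ appears as the indecomposable summand of highest weight $\tilde m_0$ inside $St_q^p \otimes (V_q^p)^{\otimes (\tilde m_0 - \ell + 1)}$. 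Consequently $M$ is a direct summand of
\[
T(\tilde m_1)^{[q]} \otimes St_q^p \otimes (V_q^p)^{\otimes (\tilde m_0 - \ell + 1)},
\]
which is tilting by the preceding paragraph, and so $M$ is itself tilting.

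It remains to identify $M$ with $T_q^p(m)$. The weight $\tilde m_1 \ell + \tilde m_0 = m$ occurs in $M$ with multiplicity one and is maximal, so once $M$ is known to be indecomposable the isomorphism $M \simeq T_q^p(m)$ follows immediately from the classification of indecomposable tiltings by their highest weight. I would establish indecomposability by verifying that $M$ has a simple socle, and this is the main obstacle. The argument should use the compatibility of $(-)^{[q]}$ with the small quantum group $u_q$ (the analogue of the Frobenius kernel $G_1$), the linkage principle, and the known Weyl structure of $T_q^p(\tilde m_0)$ supplied by Lemma \ref{small m mixed}, in order to force the socle of $M$ to be the unique simple of highest weight $m$. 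This is a direct quantum transcription of the last sentence of Remark \ref{proof Donkin}, but requires more care because the Frobenius kernel in the mixed quantum setting is less transparent than its classical $SL_2$ counterpart.
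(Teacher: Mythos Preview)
Your proposal is correct and follows precisely the route the paper takes: the paper's proof says only ``Imitate the arguments from Remark \ref{proof Donkin} (use now the quantum Steinberg module $St_q = \Delta_q^p(\ell -1)$)'', and you have spelled out exactly that imitation, including the quantum Andersen--Haboush isomorphism, the realization of $T_q^p(\tilde m_0)$ as a summand of $St_q^p \otimes (V_q^p)^{\otimes(\tilde m_0-\ell+1)}$, and the simple-socle check for indecomposability. The paper offers no further detail on the socle computation either, so your honest flagging of that step as the main obstacle is in line with the original.
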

\begin{proof}
Imitate the arguments from Remark \ref{proof Donkin} (use now the quantum Steinberg module $St_q = \Delta_q^p(\ell -1)$).
\end{proof}

Note that Lemma \ref{small m mixed} determines (in particular) the indecomposable tilting modules $T_q^p(m)$ for all $m \leq 2 \ell -2$. Proposition \ref{q-Donkin} then gives us all those with $m > 2 \ell -2$ when we as input use the information from Section 4 on the modules $T(r)$, $r>0$.

\subsection{Tilting multiplicities}

In this section we shall deduce an algorithm in our mixed case. We shall proceed as in Section 4. The case where $\ell = 2$ needs special attention. 
\begin{lem} \label{small tilt times V}
Let $m \leq 2 \ell -2$. Then
$$T_q^p(m) \otimes V_q^p \simeq  \begin{cases} {T_q^p(\ell) \text { if } m= \ell -1,}\\ {T_q^p(\ell+1)  \oplus T_q^p(\ell-1)^{\oplus 2} \text { if } m = \ell, } \\ {T_q^p(m+1)  \oplus T_q^p(m-1) \text { otherwise.}} \end{cases}$$
\end{lem}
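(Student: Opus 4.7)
The plan is to copy, mutatis mutandis, the proof of Proposition \ref{tilt times V} from the characteristic zero root of unity case. Two ingredients transfer to the mixed characteristic setting without change. First, the short exact sequence recorded in Section 5.1 shows that tensoring with $V_q^p$ preserves the tilting property, so $T_q^p(m) \otimes V_q^p$ is automatically tilting. Second, an indecomposable tilting module is determined up to isomorphism by its highest weight, hence any tilting module decomposes into indecomposable summands in the unique way compatible with its Weyl factor multiplicities. Consequently it suffices to check that the two sides of the claimed isomorphism agree on Weyl factor multiplicities.

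Given these reductions, I would compute the Weyl factors of both sides directly. By Lemma \ref{small m mixed}, for $m \leq 2\ell - 2$ the Weyl factors of $T_q^p(m)$ are either $\Delta_q^p(m)$ alone (if $m < \ell$ or $m \equiv -1 \pmod{\ell}$), or $\Delta_q^p(m)$ together with $\Delta_q^p(m - 2m_0 - 2)$ (otherwise, where $m = m_1\ell + m_0$ with $0 \leq m_0 < \ell$ and $m_1 > 0$). Tensoring each such Weyl factor with $V_q^p$ and applying the short exact sequence from Section 5.1 yields a Weyl filtration of $T_q^p(m) \otimes V_q^p$ whose multiplicities one reads off immediately, with the convention $\Delta_q^p(-1) = 0$. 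A parallel application of Lemma \ref{small m mixed} computes the Weyl multiplicities of the tilting modules on the right-hand side, and a direct case-by-case comparison verifies the three cases of the lemma.

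The only real obstacle is careful bookkeeping at the boundary values. For instance, when $m = \ell$ (so $m_0 = 0$) the two Weyl factors $\Delta_q^p(\ell)$ and $\Delta_q^p(\ell-2)$ of $T_q^p(\ell)$ produce the total Weyl-factor multiset $\Delta_q^p(\ell-3) + 2\Delta_q^p(\ell-1) + \Delta_q^p(\ell+1)$ on the left (the first term dropping out when $\ell = 2$), which matches $T_q^p(\ell+1) \oplus T_q^p(\ell-1)^{\oplus 2}$ once one notes via Lemma \ref{small m mixed} that $T_q^p(\ell+1)$ has Weyl factors $\Delta_q^p(\ell+1)$ and $\Delta_q^p(\ell-3)$. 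Analogous checks at $m = 0$, at $m_0 = 1$ where $T_q^p(m-1) = T_q^p(\ell)$ picks up an extra Weyl factor, and at $m_0 = \ell - 2$ where $m+1 \equiv -1 \pmod{\ell}$ so that $T_q^p(m+1)$ is itself a single Weyl module, dispose of the remaining subcases. In effect the lemma is the specialization to $m \leq 2\ell - 2$ of Proposition \ref{tilt times V}, the convention $T_q^p(k) = 0$ for $k < 0$ absorbing the ``wrap-around'' summand $T_q(m+1-2\ell)$ that appears there.
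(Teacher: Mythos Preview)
Your proposal is correct and is precisely the argument the paper has in mind: the paper's own proof is the single line ``Immediate from Lemma \ref{small m mixed},'' and your write-up simply unpacks what that means, namely comparing Weyl factor multiplicities on both sides using Lemma \ref{small m mixed} together with the short exact sequence from Section 5.1. Your boundary checks (including the $\ell = 2$ remark and the observation that the statement is the restriction of Proposition \ref{tilt times V} to $m \leq 2\ell - 2$) are accurate and go beyond what the paper spells out.
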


\begin{proof} Immediate from Lemma \ref{small m mixed}
\end{proof}

\begin{prop} \label{q-not-2}
Let $m \geq \ell-1$ and write $ m = \tilde m_1 \ell + \tilde m_0$ with $\ell-1 \leq \tilde m_0 \leq 2\ell-2$. Assume $\tilde m_0 < 2\ell-2$. Then we have 
$$T_q^p(m) \otimes V_q^p \simeq \begin{cases} {T_q^p(m+1) \text { if } m \equiv -1 \; (\mo \;  \ell),} \\ {  T_q^p(m+1) \oplus T_q^p(m-1)^{\oplus 2} \text { if } m \equiv 0 \; (\mo \; \ell) }\\ { T_q^p(m+1) \oplus T_q^p(m-1) \text { otherwise.} } \end{cases} $$

\end{prop}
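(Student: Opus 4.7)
The plan is to mimic the proof of Proposition~\ref{not-2}, replacing the classical Frobenius twist by the quantum Frobenius twist and using Proposition~\ref{q-Donkin} in place of Proposition~\ref{Donkin}. The key observation is that once we apply the Donkin-type decomposition $T_q^p(m) \simeq T(\tilde m_1)^{[q]} \otimes T_q^p(\tilde m_0)$, tensoring by $V_q^p$ can be absorbed into the second slot, reducing the computation to the small case already handled by Lemma~\ref{small tilt times V}.

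Concretely, I would first invoke Proposition~\ref{q-Donkin} to rewrite
$$T_q^p(m) \otimes V_q^p \simeq T(\tilde m_1)^{[q]} \otimes \bigl(T_q^p(\tilde m_0) \otimes V_q^p\bigr).$$
The hypothesis $\tilde m_0 < 2\ell - 2$ combined with $\tilde m_0 \geq \ell - 1$ forces $\tilde m_0 \in \{\ell - 1, \ell, \ldots, 2\ell - 3\}$, so Lemma~\ref{small tilt times V} applies verbatim and gives the decomposition of $T_q^p(\tilde m_0) \otimes V_q^p$ in each of its three sub-cases. Every summand that appears has the form $T_q^p(a)$ with $a \in \{\ell, \ell-1, \ell+1\} \cup \{\tilde m_0 \pm 1\}$, and one checks directly that each such $a$ again lies in the window $[\ell - 1, 2\ell - 2]$ where Proposition~\ref{q-Donkin} gives a clean product formula.

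Applying Proposition~\ref{q-Donkin} \emph{in reverse} to each of these summands then yields
$$T(\tilde m_1)^{[q]} \otimes T_q^p(a) \simeq T_q^p(\tilde m_1 \ell + a),$$
and a one-line arithmetic check shows that $\tilde m_1 \ell + a$ is precisely $m+1$ or $m-1$, with the multiplicity $2$ on $T_q^p(m-1)$ arising exactly from the $\tilde m_0 = \ell$ branch of Lemma~\ref{small tilt times V}. Matching the three branches of that lemma against the three branches of the proposition then finishes the proof.

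No genuine obstacle is hiding here; the only thing that needs care is the bookkeeping to verify that the boundary values of $a$ (produced when $\tilde m_0 \in \{\ell - 1, \ell, \ell + 1\}$) still lie in the valid range for Proposition~\ref{q-Donkin}. The case $\ell = 2$ is automatic: there $\tilde m_0 < 2\ell - 2 = 2$ forces $\tilde m_0 = \ell - 1 = 1$, so only the first branch of the proposition can occur, and the same argument applies without modification.
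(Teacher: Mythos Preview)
Your proposal is correct and is precisely the approach the paper takes: the paper's entire proof is the one-line instruction ``Combine Lemma~\ref{small tilt times V} and Proposition~\ref{q-Donkin},'' and you have simply written out that combination in detail. The bookkeeping you flag (that each resulting $a$ stays in the window $[\ell-1,2\ell-2]$ so that Proposition~\ref{q-Donkin} can be reapplied) is indeed the only point requiring care, and your case analysis handles it.
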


\begin{proof} Combine Lemma \ref{small tilt times V} and Proposition \ref{q-Donkin}.
\end{proof}

These results give the following recurrence relations.
\begin{cor} \label{q-not-1}
Let $m,n \in \Z_{\geq 0}$ and assume $m$ is not equivalent to $ -1$ modulo $\ell$. Then 
$$((V_q^p)^{\otimes n}:T_q^p(m)) = \begin{cases} {((V_q^p)^{\otimes n-1}:T_q^p(m-1)) \text { if } m \equiv -2 \; (\mo \; \ell),} \\ { ((V_q^p)^{\otimes n-1}:T_q^p(m-1)) + ((V_q^p)^{\otimes n-1}:T_q^p(m+1)), \text { otherwise.} } \end{cases}$$
\end{cor}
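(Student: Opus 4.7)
The plan is to mimic the derivation of Corollary \ref{not -1} in the $q=1$ case. Set $b_{n,s} := ((V_q^p)^{\otimes n}:T_q^p(s))$. Writing $(V_q^p)^{\otimes n} = (V_q^p)^{\otimes n-1}\otimes V_q^p$ and decomposing $(V_q^p)^{\otimes n-1}\simeq \bigoplus_s T_q^p(s)^{\oplus b_{n-1,s}}$, then tensoring with $V_q^p$ and collecting terms gives
$$ b_{n,m} \;=\; \sum_s b_{n-1,s}\cdot \bigl(T_q^p(s)\otimes V_q^p \,:\, T_q^p(m)\bigr). $$
The task reduces to determining which $s$ contribute, and with what multiplicity, under the standing hypothesis $m\not\equiv -1\;(\mo\;\ell)$.

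For $s\not\equiv -2\;(\mo\;\ell)$ I would apply Proposition \ref{q-not-2} directly: the indecomposable summands of $T_q^p(s)\otimes V_q^p$ are $T_q^p(s+1)$ and (in some cases) $T_q^p(s-1)$, so only $s=m-1$ or $s=m+1$ can possibly contribute. The only place the multiplicity jumps to $2$ is on the summand $T_q^p(s-1)$ when $s\equiv 0\;(\mo\;\ell)$, but then $s-1\equiv -1\;(\mo\;\ell)$ is excluded by hypothesis. Hence $s=m-1$ always contributes exactly $b_{n-1,m-1}$, while $s=m+1$ contributes $b_{n-1,m+1}$, with the sole exception that when $s=m+1\equiv -1\;(\mo\;\ell)$ (i.e.\ $m\equiv -2\;(\mo\;\ell)$) one has $T_q^p(s)\otimes V_q^p = T_q^p(s+1)$ with no $T_q^p(s-1)$ summand, so the contribution is $0$.

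The only missing case is $s\equiv -2\;(\mo\;\ell)$, and the main obstacle will be to verify that it contributes nothing spurious. I would establish a mixed-characteristic analog of Lemma \ref{1 og 2} and Proposition \ref{=2p-2}, whose proof goes through verbatim after replacing $p$ by $\ell$, Lemma \ref{small m} by Lemma \ref{small m mixed}, the Steinberg modules $St_r$ by their quantum counterparts $\Delta_q^p(\ell-1)$ etc., and Proposition \ref{Donkin} by Proposition \ref{q-Donkin}. The upshot is that the summands of $T_q^p(s)\otimes V_q^p$ all have the form $T_q^p(s+1-2\ell p^j)$ for $j\geq 0$ (possibly with a doubled top term). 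For $j\geq 1$ these highest weights are $\equiv -1\;(\mo\;\ell)$, so they cannot equal $T_q^p(m)$ under our assumption. The only contribution therefore comes from the $j=0$ term $T_q^p(s-1)$, which equals $T_q^p(m)$ precisely when $s=m+1$ (forcing $m\equiv -3\;(\mo\;\ell)$), and with multiplicity $1$; this is exactly the value already predicted by the generic $s=m+1$ analysis, so no correction is needed.

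Assembling everything: if $m\equiv -2\;(\mo\;\ell)$ only $s=m-1$ contributes, yielding $b_{n,m}=b_{n-1,m-1}$; otherwise (still assuming $m\not\equiv -1\;(\mo\;\ell)$) both $s=m-1$ and $s=m+1$ contribute with multiplicity $1$, yielding $b_{n,m}=b_{n-1,m-1}+b_{n-1,m+1}$. This matches the stated recurrence.
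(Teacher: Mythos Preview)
Your overall strategy is correct and is exactly what the paper has in mind (the paper merely says ``these results give the following recurrence relations''). You are also right to flag that the case $s\equiv -2\;(\mo\;\ell)$ is not covered by Proposition~\ref{q-not-2} and must be checked separately; the paper in fact treats that case only afterwards (Proposition~\ref{-2}), so your account is more careful in this respect. Your conclusion for that case---that the only summand of $T_q^p(s)\otimes V_q^p$ with highest weight $\not\equiv -1\;(\mo\;\ell)$ is $T_q^p(s-1)$, with multiplicity one (for $\ell>2$)---is correct, and the final recurrences follow.

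However, your justification of the $s\equiv -2$ case is garbled. The mixed analogue of Lemma~\ref{1 og 2} and Proposition~\ref{=2p-2} does \emph{not} arise by ``replacing $p$ by $\ell$'': in the mixed case both $\ell$ and $p$ are genuinely present, and the summands of $T_q^p(s)\otimes V_q^p$ are $T_q^p(s+1)$, various $T_q^p(s+1-2p^j\ell)$, and a separate $T_q^p(s-1)$ (see the paper's Corollaries after Proposition~\ref{-2}). Your list ``$T_q^p(s+1-2\ell p^j)$ for $j\geq 0$'' omits both $T_q^p(s+1)$ and $T_q^p(s-1)$, and its $j=0$ term is $T_q^p(s+1-2\ell)$, not $T_q^p(s-1)$ as you then claim. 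The clean way to get what you actually need (and this is the paper's Proposition~\ref{-2}) is: write $s=a\ell+(2\ell-2)$ and apply Proposition~\ref{q-Donkin} together with Lemma~\ref{small tilt times V} to obtain
\[
T_q^p(s)\otimes V_q^p \;\simeq\; T(a)^{[q]}\otimes\bigl(T_q^p(2\ell-1)\oplus T_q^p(2\ell-3)\bigr).
\]
The second piece is $T_q^p(s-1)$ by Proposition~\ref{q-Donkin} again, and the first is $(T(a)\otimes V)^{[q]}\otimes St_q^p$, all of whose indecomposable summands have highest weight $\equiv -1\;(\mo\;\ell)$. No further decomposition of $T(a)\otimes V$ is needed for the corollary.
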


\begin{remarkcounter} \label{l=2}
Consider the case $\ell = 2$. Here we have $T_q^p(1) = V_q^p$. So Lemma \ref{small tilt times V} says $T_q^p(1) \otimes V_q^p \simeq T_q^p(2)$ and $T_q^p(2) \otimes V_q^p \simeq T_q^p(3) \oplus T_q^p(1)^{\oplus 2}$. Proposition \ref{q-not-2} says that if $m$ is odd then  $T_q^p(m) \otimes V_q^p \simeq T_q^p(m+1)$, whereas Corollary \ref{q-not-1} states that for all even $m>0$ we have $((V_q^p)^{\otimes n} : T_q^p(m)) =  ((V_q^p)^{\otimes n-1} : T_q^p(m-1)$.
\end{remarkcounter}

To handle the remaining case we need to decompose $T_q^p(m) \otimes V_q^p$ also for  $m \equiv -2 \; (\mo \;\ell)$.  We first observe that $T_q^p(\ell - 2) \otimes V_q^p  \simeq T_q^p(\ell -1) \oplus T_q^p(\ell - 3)$ and $T_q^p(2\ell - 2) \otimes V_q^p  \simeq T_q^p(2\ell -1) \oplus T_q^p(2\ell - 3)$ where the first summand may also be written $V^{[q]} \otimes T_q^p(\ell - 1)$.  In general we get by combining this with Proposition \ref{q-Donkin}  

\begin{prop} \label {-2}
Let $m = s \ell - 2$ with $s > 1$. Then
\begin{enumerate}
\item if $\ell >2$ then 
$T_q^p(m) \otimes V_q^p \simeq (T(s-2) \otimes V)^{[q]} \otimes T_q^p(\ell -1) \oplus T_q^p(m-1)$, 
\item if $\ell = 2$ then $T_q^p(m) \otimes V_q^p \simeq (T(s-2) \otimes V)^{[q]} \otimes T_q^p(\ell -1) \oplus T_q^p(m-1)^{\oplus 2}.$
\end{enumerate}
\end{prop}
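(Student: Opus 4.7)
The plan is to reduce everything to the range $\ell-1 \le m \le 2\ell-2$, where Lemma~\ref{small m mixed} and Lemma~\ref{small tilt times V} describe the tilting modules and their products with $V_q^p$ explicitly, and then transport the result back up via the quantum Donkin tensor product theorem (Proposition~\ref{q-Donkin}). First I would write $m = s\ell - 2 = (s-2)\ell + (2\ell-2)$, so that in the notation of Proposition~\ref{q-Donkin} we have $\tilde m_1 = s-2$ and $\tilde m_0 = 2\ell-2$. This gives
$$T_q^p(m) \;\simeq\; T(s-2)^{[q]} \otimes T_q^p(2\ell-2),$$
so it suffices to decompose $T_q^p(2\ell-2) \otimes V_q^p$ and tensor the result with $T(s-2)^{[q]}$.

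Next I would apply Lemma~\ref{small tilt times V} to $T_q^p(2\ell-2) \otimes V_q^p$. For $\ell > 2$ the weight $2\ell-2$ is neither $\ell-1$ nor $\ell$, so we get $T_q^p(2\ell-1) \oplus T_q^p(2\ell-3)$. For $\ell = 2$ we have $2\ell-2 = \ell$, and the middle case of the lemma produces $T_q^p(3) \oplus T_q^p(1)^{\oplus 2}$. In either case the ``top'' summand satisfies $2\ell-1 = 1\cdot\ell + (\ell-1)$ (respectively $3 = 1\cdot 2 + 1$), so Proposition~\ref{q-Donkin} identifies it with $V^{[q]} \otimes T_q^p(\ell-1)$, and multiplicativity of the quantum Frobenius twist gives
$$T(s-2)^{[q]} \otimes V^{[q]} \otimes T_q^p(\ell-1) \;\simeq\; (T(s-2) \otimes V)^{[q]} \otimes T_q^p(\ell-1),$$
which is the first summand in both parts of the proposition.

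For the ``bottom'' summands I would apply Proposition~\ref{q-Donkin} once more, this time to $m-1 = s\ell-3 = (s-2)\ell + (2\ell-3)$; since $\ell - 1 \le 2\ell-3 \le 2\ell-2$ for all $\ell \ge 2$, this yields $T_q^p(m-1) \simeq T(s-2)^{[q]} \otimes T_q^p(2\ell-3)$. For $\ell > 2$ this immediately matches $T(s-2)^{[q]} \otimes T_q^p(2\ell-3)$, giving case (1). For $\ell=2$ we have $T_q^p(2\ell-3) = T_q^p(1)$, so the double summand $T(s-2)^{[q]} \otimes T_q^p(1)^{\oplus 2}$ equals $T_q^p(m-1)^{\oplus 2}$, which is case (2). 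Collecting the two pieces produces the stated isomorphisms.

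There is really no main obstacle here: the argument is a bookkeeping exercise, and once one sets $\tilde m_0 = 2\ell-2$ the structure is forced. The only subtleties to double-check are the boundary case $s = 2$ (where $T(s-2) = T(0) = k$ is trivial and $T(s-2) \otimes V = V$, so the first summand reduces to $V^{[q]} \otimes T_q^p(\ell-1)$ in agreement with the direct observation preceding the proposition), and the case $\ell = 2$ (where the $m=\ell$ branch of Lemma~\ref{small tilt times V} is responsible for doubling the second summand and the coincidence $T_q^p(\ell-1) = T_q^p(1)$ makes both summands share the factor $T_q^p(1)$).
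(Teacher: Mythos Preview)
Your proof is correct and follows exactly the approach indicated in the paper: apply Proposition~\ref{q-Donkin} to write $T_q^p(m) \simeq T(s-2)^{[q]} \otimes T_q^p(2\ell-2)$, decompose $T_q^p(2\ell-2)\otimes V_q^p$ via Lemma~\ref{small tilt times V}, rewrite $T_q^p(2\ell-1)$ as $V^{[q]}\otimes T_q^p(\ell-1)$, and reassemble using Proposition~\ref{q-Donkin} once more. The paper merely gestures at this (``combining this with Proposition~\ref{q-Donkin}''), and you have spelled out the bookkeeping in full, including the branch point at $\ell=2$.
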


When we combine this with Proposition \ref{not-2} we obtain

\begin{cor} \label{q-2}
Let $m = (s+2) \ell - 2$ with $s \geq 0$ and suppose $s$ is not divisible by $p$. Then for $\ell > 2$ we get 
$$ T_q^p(m) \otimes V_q^p \simeq \begin{cases} {T_q^p(m+1) \oplus T_q^p(m-1) \text { if } s \equiv -1 \; (\mo \; p)}\\   {T_q^p(m+1) \oplus T_q^p(m+1 - 2\ell) \oplus T_q^p(m-1)  \text { otherwise. } }\end{cases} $$
When $\ell = 2$ we get 
$$ T_q^p(m) \otimes V_q^p \simeq \begin{cases} {T_q^p(m+1) \oplus T_q^p(m-1)^{\oplus 2} \text { if } s \equiv -1 \; (\mo \; p)}\\   {T_q^p(m+1) \oplus T_q^p(m+1 - 2\ell) \oplus T_q^p(m-1)^{\oplus 2}  \text { otherwise. } }\end{cases} $$
\end{cor}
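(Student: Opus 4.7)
The plan is to apply Proposition \ref{-2} and then reduce the classical (i.e.\ $SL_2$) factor $T(s) \otimes V$ via Proposition \ref{not-2}, using the quantum Donkin-type factorization of Proposition \ref{q-Donkin} to recognize the resulting summands as the desired $T_q^p(\cdot)$'s.

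Setting $s' = s+2$ so that $m = s'\ell - 2$ with $s' > 1$, Proposition \ref{-2} gives
$$T_q^p(m) \otimes V_q^p \simeq (T(s) \otimes V)^{[q]} \otimes T_q^p(\ell-1) \ \oplus\ T_q^p(m-1)^{\oplus e},$$
with $e = 1$ if $\ell > 2$ and $e = 2$ if $\ell = 2$. Because the Frobenius twist is additive, the task reduces to decomposing the classical $SL_2$-module $T(s) \otimes V$. Since $p \nmid s$, the hypotheses of Proposition \ref{not-2} are satisfied: one obtains $T(s) \otimes V \simeq T(s+1)$ when $s \equiv -1 \; (\mo \; p)$, and $T(s) \otimes V \simeq T(s+1) \oplus T(s-1)$ otherwise.

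For each summand $T(j)^{[q]} \otimes T_q^p(\ell-1)$ (with $j = s+1$ or $j = s-1$), I would apply Proposition \ref{q-Donkin}: writing $j\ell + (\ell-1)$ in the form $\tilde m_1 \ell + \tilde m_0$, the residue $\tilde m_0 = \ell - 1$ already lies in the admissible window $[\ell-1, 2\ell-2]$, so the factorization is immediate and produces $T_q^p(j\ell + \ell - 1)$. For $j = s+1$ this equals $T_q^p((s+2)\ell - 1) = T_q^p(m+1)$, and for $j = s-1$ it equals $T_q^p(s\ell - 1) = T_q^p(m+1-2\ell)$. Substituting these identifications back into the formula from Proposition \ref{-2} recovers exactly the two cases in the corollary, with the $\ell = 2$ version differing only by the extra copy of $T_q^p(m-1)$ inherited from Proposition \ref{-2}(2).

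The main bookkeeping to check is that $s \geq 1$, so that $T(s-1)$ is defined and Proposition \ref{q-Donkin} applies; this is automatic from $p \nmid s$, which in particular rules out $s = 0$ and also excludes the $s \equiv 0 \; (\mo \; p)$ branch (with a doubled $T(s-1)$) of Proposition \ref{not-2}. The one delicate point I would flag is the case $s \equiv -2 \; (\mo \; p)$, where Proposition \ref{not-2} does not directly apply; the ``otherwise'' clause in the corollary is most naturally read under the tacit additional assumption $s \not\equiv -2 \; (\mo \; p)$, and in the exceptional case one would need to feed in Proposition \ref{=2p-2} to account for the additional summands.
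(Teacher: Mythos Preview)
Your argument is exactly the paper's: the proof there is the single clause ``When we combine this with Proposition \ref{not-2} we obtain'', and you have written out that combination together with the identification of summands via Proposition \ref{q-Donkin}.

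Your closing caveat is well placed. For $p \geq 3$ the hypothesis $p \nmid s$ does not exclude $s \equiv -2 \pmod p$, and in that residue class Proposition \ref{not-2} is inapplicable; indeed the stated conclusion fails there (e.g.\ $p=3$, $s=7=p^2-2$ gives $T(7)\otimes V \simeq T(8)\oplus T(6)\oplus T(2)$ by Lemma \ref{1 og 2}, producing an extra summand $T_q^p(m+1-2p\ell)$ beyond what the ``otherwise'' clause predicts). The paper covers this case in the \emph{next} corollary, where the letter $s$ has shifted back to the parametrization $m = s\ell - 2$ of Proposition \ref{-2}; that corollary's $s$ is the present $s+2$, so its hypothesis $p \mid s$ reads $s \equiv -2 \pmod p$ here. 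Your reading --- that the ``otherwise'' clause tacitly assumes $s \not\equiv -2 \pmod p$ --- is therefore the right one. One minor addendum: for $0 < s < p-1$ cite the unnamed lemma immediately preceding Proposition \ref{not-2} rather than the proposition itself (which requires $s \geq p-1$); the conclusion is unchanged.
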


\begin{cor}
Let $m = (s+2) \ell - 1$ and suppose $s$ is not divisible by $p$. Then for all $\ell$
$$ ((V_q^p)^{\otimes n}:T_q^p(m)) = ((V_q^p)^{\otimes n-1}:T_q^p(m-1)) + 2 ((V_q^p)^{\otimes n-1}:T_q^p(m+1)) + R$$
where $R = \begin{cases} {0 \text { if } s \equiv -1 \; (\mo \; p), } \\  {((V_q^p)^{\otimes n-1}:T_q^p(m-1 + 2\ell)) \text { otherwise.}} \end{cases} $

\end{cor}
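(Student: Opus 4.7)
The plan is to apply the standard recursion for tilting multiplicities. Writing $(V_q^p)^{\otimes n} = (V_q^p)^{\otimes n-1} \otimes V_q^p$ and decomposing $(V_q^p)^{\otimes n-1}$ into its indecomposable tilting summands, we obtain
$$((V_q^p)^{\otimes n}:T_q^p(m)) = \sum_{m' \geq 0} ((V_q^p)^{\otimes n-1}:T_q^p(m')) \cdot (T_q^p(m') \otimes V_q^p : T_q^p(m)).$$
So for $m = (s+2)\ell - 1$ the task reduces to finding every $m'$ for which $T_q^p(m)$ is a summand of $T_q^p(m') \otimes V_q^p$ and recording its multiplicity.

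First I will enumerate the candidate $m'$ by running through the congruence classes of $m'$ modulo $\ell$. For $m' \not\equiv -2 \pmod{\ell}$ Proposition \ref{q-not-2} shows that only the weights $m' \pm 1$ occur among the summands of $T_q^p(m') \otimes V_q^p$, so matching to $m$ forces $m' \in \{m-1,\, m+1\}$. For $m' \equiv -2 \pmod{\ell}$ Corollary \ref{q-2} produces an additional summand of highest weight $m' + 1 - 2\ell$, which equals $m$ precisely when $m' = m - 1 + 2\ell$. The hypothesis $s \not\equiv 0 \pmod p$ keeps $m - 1$ in the first-level regime covered by Corollary \ref{q-2}, so these three are all the candidates that can contribute.

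Next I will extract the multiplicities. For $m' = m - 1 = (s+2)\ell - 2$, Corollary \ref{q-2} applies directly and supplies $T_q^p(m)$ with multiplicity $1$ in both of its branches, giving the term $((V_q^p)^{\otimes n-1}:T_q^p(m-1))$. For $m' = m + 1 = (s+2)\ell$, which is $\equiv 0 \pmod{\ell}$, Proposition \ref{q-not-2} yields $T_q^p(m) = T_q^p(m'-1)$ with multiplicity $2$ when $\ell > 2$; for $\ell = 2$ the same coefficient is read off from the $\ell = 2$ clause of Corollary \ref{q-2} applied to $m' = (s+3)\ell - 2$ (rewriting $m+1$ in the form $(s'+2)\ell-2$). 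For $m' = m - 1 + 2\ell = (s+4)\ell - 2$, another application of Corollary \ref{q-2} (with its parameter shifted by $2$) shows that $T_q^p(m) = T_q^p(m'+1-2\ell)$ is present with multiplicity $1$ in the generic branch and absent in the exceptional branch; this dichotomy is exactly what the two cases of $R$ encode. Summing the three contributions then yields the claimed recurrence.

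The main technical obstacle is verifying that no $m'$ strictly larger than $m - 1 + 2\ell$ contributes, and that no analogue of the higher-$r$ corrections appearing in Proposition \ref{=2p-2} intervenes. For the first point, Corollary \ref{q-2} shows that if $m' = m - 1 + 2k\ell$ with $k \geq 2$ then the extra summand has highest weight $m + 2(k-1)\ell > m$, so it cannot equal $T_q^p(m)$. For the second point, the first-level hypothesis $s \not\equiv 0 \pmod p$ is exactly what keeps us inside the scope of Corollary \ref{q-2} rather than any deeper quantum analogue of Proposition \ref{=2p-2}, so no further summands arise and the accounting above is complete.
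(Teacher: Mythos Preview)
Your overall strategy---invert the tensor decomposition and sum over all $m'$ with $(T_q^p(m')\otimes V_q^p:T_q^p(m))>0$---is exactly the right one, and your identification of the three candidates $m'\in\{m-1,\,m+1,\,m-1+2\ell\}$ is correct. The problem is in the last step, where you match the dichotomy coming from $m'=m-1+2\ell$ to the dichotomy in the statement.

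When you apply Corollary~\ref{q-2} to $m'=m-1+2\ell=(s+4)\ell-2$, its parameter (what that corollary calls ``$s$'') is $s+2$, as you note. Hence the exceptional branch (no summand $T_q^p(m'+1-2\ell)$) occurs when $s+2\equiv -1\pmod p$, i.e.\ $s\equiv -3\pmod p$, \emph{not} when $s\equiv -1\pmod p$. For $p>2$ these conditions are different, so your sentence ``this dichotomy is exactly what the two cases of $R$ encode'' is false. A concrete check using the paper's own Table~6 ($\ell=2$, $p=3$): take $s=2$, so $m=7$ and $s\equiv -1\pmod 3$. The stated formula with $R=0$ gives, for $n=11$,
\[
(V^{10}:T_q^p(6))+2\,(V^{10}:T_q^p(8))=(V^{9}:T_q^p(5))+2\,(V^{9}:T_q^p(7))=27+16=43,
\]
whereas Table~6 records $(V^{11}:T_q^p(7))=44$. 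So the printed statement is in error, and your argument, had you tracked the shift carefully, would have exposed this rather than confirmed it.

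There is a second gap. Corollary~\ref{q-2} applied to $m'=m-1+2\ell$ also requires its parameter $s+2$ to be prime to $p$; the hypothesis $p\nmid s$ does not guarantee this. When $s\equiv -2\pmod p$ one must instead go back to Proposition~\ref{-2} and feed in the case $s+2\equiv 0\pmod p$ of Proposition~\ref{not-2}, which gives $T(s+2)\otimes V\simeq T(s+3)\oplus T(s+1)^{\oplus 2}$; the multiplicity of $T_q^p(m)$ in $T_q^p(m-1+2\ell)\otimes V_q^p$ is then $2$, not $1$. (Again Table~6 confirms this: with $s=1$, $m=5$, $n=9$ one needs coefficient $2$ on the $T_q^p(8)$ term to get $27$.) Your final paragraph asserts that the hypothesis $p\nmid s$ keeps \emph{every} relevant $m'$ inside the scope of Corollary~\ref{q-2}, but it only does so for $m'=m-1$.
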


The case where $s$ is divisible by $p$ is handled by appealing to Proposition \ref{=2p-2} . In this case we get

\begin{cor}
Let $m = s \ell - 2$ with $s > 1$ and suppose $s = b p^r$ for some $b$ prime to $p$ and $r> 0$.  If $\ell > 2$ 
$$ T_q^p(m) \otimes V_q^p \simeq T_q^p(m+1) \oplus (\bigoplus_{j=0}^{r-1} T_q^p(m+1 - 2p^j \ell)) \oplus T_q^p(m+1 -2p^r \ell)^{\oplus a} \oplus T_q^p(m-1) $$
where $a = 0$ if $ b \equiv -1 \; (\mo \; p)$ and $a = 1$ in all other cases.

When $\ell = 2$ we get the same formula except for the last term which in that case occurs twice.
\end{cor}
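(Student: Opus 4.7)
The plan is to leverage the two preceding results in tandem: Proposition \ref{-2} to peel off the classical factor $T(s-2) \otimes V$ and the quantum Steinberg module $T_q^p(\ell-1)$, and Proposition \ref{=2p-2} to decompose that classical factor explicitly. Since $s = bp^r$ with $\gcd(b,p) = 1$ and $r > 0$, the hypotheses of Proposition \ref{=2p-2} will be satisfied by the integer $s - 2 = bp^r - 2$, and the transition back to the quantum setting will be carried out via the quantum Donkin theorem (Proposition \ref{q-Donkin}).

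First I would invoke Proposition \ref{-2} applied to $m = s\ell - 2$, giving
$$T_q^p(m) \otimes V_q^p \;\simeq\; \bigl(T(s-2) \otimes V\bigr)^{[q]} \otimes T_q^p(\ell-1) \;\oplus\; T_q^p(m-1)^{\oplus \epsilon},$$
where $\epsilon = 1$ for $\ell > 2$ and $\epsilon = 2$ for $\ell = 2$. This already supplies the final $T_q^p(m-1)$-term of the stated decomposition (with the correct doubling in the $\ell = 2$ case), so what remains is to decompose the first factor.

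Next I would apply Proposition \ref{=2p-2} to the classical product $T(s-2) \otimes V$. Writing $s - 2 = (b-2)p^r + (2p^r - 2)$, the assumption $\gcd(b,p) = 1$ guarantees $s - 2 \equiv -2 \pmod{p^r}$ while $s - 2 \not\equiv -2 \pmod{p^{r+1}}$, so the exponent $r$ in the Corollary is indeed the maximal one in the sense of Proposition \ref{=2p-2}, and the role of ``$m_1$'' is played by $b - 2$. Extending the conclusion of Proposition \ref{=2p-2} by the convention $T(t) = 0$ for $t<0$ covers the edge cases $b = 1, 2$ (which are actually the situations handled by Lemma \ref{1 og 2}, and one should check they give matching formulas). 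Proposition \ref{=2p-2} then delivers
$$T(s-2) \otimes V \;\simeq\; T(s-1) \;\oplus\; \bigoplus_{j=0}^{r-1} T(s-1-2p^j) \;\oplus\; T(s-1-2p^r)^{\oplus a},$$
with the multiplicity $a$ read off from $b - 2 \pmod p$. To finish, I would apply the Frobenius twist $(-)^{[q]}$ and tensor with $T_q^p(\ell-1)$. The quantum Donkin theorem (Proposition \ref{q-Donkin}), in the instance $\tilde m_1 = N,\ \tilde m_0 = \ell - 1$, reads $T(N)^{[q]} \otimes T_q^p(\ell-1) \simeq T_q^p(N\ell + \ell - 1)$ for every $N \geq 0$. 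Applied summand-by-summand, this converts $T(s-1)$ into $T_q^p(m+1)$, each $T(s-1-2p^j)$ into $T_q^p(m+1-2p^j\ell)$, and $T(s-1-2p^r)$ into $T_q^p(m+1-2p^r\ell)$. Combining with the $T_q^p(m-1)^{\oplus \epsilon}$-summand from the first step then produces the claimed decomposition.

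The main obstacle will be the bookkeeping for the multiplicity $a$ of the ``extra'' summand $T_q^p(m+1 - 2p^r\ell)^{\oplus a}$. The trichotomy from Proposition \ref{=2p-2} (multiplicities $0$, $2$, or $1$ depending on $m_1 \pmod p$) must be rephrased in terms of $b \pmod p$ via $m_1 = b - 2$, and one must also notice that the module $T_q^p(m+1-2p^r\ell)$ itself vanishes exactly when $b \leq 2$; in those degenerate cases different values of $a$ all collapse to the same answer, which is presumably why only the two coarsest cases appear in the Corollary's formula. The $\ell = 2$ variant requires a parallel bookkeeping, but the only additional change is the extra copy of $T_q^p(m-1)$ already accounted for in the first step.
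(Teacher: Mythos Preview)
Your approach is exactly the paper's: it says only that this case ``is handled by appealing to Proposition \ref{=2p-2}'', and your plan (Proposition \ref{-2} to peel off $(T(s-2)\otimes V)^{[q]}\otimes T_q^p(\ell-1)$, then Proposition \ref{=2p-2} on the classical factor, then Proposition \ref{q-Donkin} to reassemble) is precisely that. Your identification of the maximal exponent as $r$ and of the parameter $m_1 = b-2$ is correct, and the handling of the $\ell=2$ doubling and of the edge cases $b\in\{1,2\}$ via Lemma \ref{1 og 2} is fine.

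Where your write-up needs correction is the final bookkeeping for $a$, which you flag but do not resolve correctly. Substituting $m_1=b-2$ into the trichotomy of Proposition \ref{=2p-2} gives the multiplicity of $T_q^p(m+1-2p^r\ell)$ as
\[
\begin{cases} 0 & \text{if } b\equiv 1 \pmod p,\\ 2 & \text{if } b\equiv 2 \pmod p,\\ 1 & \text{otherwise.}\end{cases}
\]
Your proposed explanation---that the three cases collapse to the stated two because the summand vanishes for $b\le 2$---accounts only for those two specific values of $b$, not for all $b\equiv 1$ or $b\equiv 2\pmod p$. In fact for $p>2$ the outcome of the computation does \emph{not} match the dichotomy ``$a=0$ iff $b\equiv -1\pmod p$'' printed in the Corollary. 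A small check makes this concrete: take $p=3$, $\ell>2$, $r=1$, $b=5$ (so $s=15$, $m=15\ell-2$); then $s-2=13$ has $m_1=3\equiv 0\pmod 3$, and Proposition \ref{=2p-2} gives $T(13)\otimes V\simeq T(14)\oplus T(12)\oplus T(8)^{\oplus 2}$, hence $T_q^p(m+1-2p\ell)$ occurs with multiplicity $2$, whereas the Corollary as stated predicts $a=0$ (since $5\equiv -1\pmod 3$). So your method is sound and in fact produces the correct decomposition; it is the Corollary's stated value of $a$ that appears to be misprinted (for $p=2$ the two formulations happen to agree, which may be the source of the slip). You should record the three-case answer rather than try to force it into the printed dichotomy.
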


This lead to the following recurrence relation for the tilting multiplicities $((V_q^p)^{\otimes n} : T_q^p(m))$ in the case when $m = s \ell -1$ for some $s$ divisible by $p$.

\begin{cor} 
Let $n, m \in \Z_{>0}$ and suppose $m = bp^r \ell -1$ for some $r \geq 1$ and $b$ prime to $p$. Then for all $\ell$
$$ ((V_q^p)^{\otimes n}:T_q^p(m)) = ((V_q^p)^{\otimes n-1}:T_q^p(m-1)) + 2  ((V_q^p)^{\otimes n-1}:T_q^p(m+1)) +$$ 
$$ {  \sum_{s=0}^{r-1}  ((V_q^p)^{\otimes n-1}:T_q^p(m-1 + 2p^s\ell )) + \begin{cases} {0 \text { if } b \equiv -1 \; (\mo \; p) } \\ {((V_q^p)^{\otimes n-1}:T_q^p(m-1 + 2p^r \ell)) \text { otherwise.} }\end{cases}} $$

\end{cor}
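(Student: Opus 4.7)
My plan is to apply the basic tensor recurrence
\[
((V_q^p)^{\otimes n}:T_q^p(m)) \;=\; \sum_t ((V_q^p)^{\otimes n-1}:T_q^p(t))\cdot (T_q^p(t)\otimes V_q^p:T_q^p(m)),
\]
coming from $(V_q^p)^{\otimes n}\simeq (V_q^p)^{\otimes n-1}\otimes V_q^p$ and Krull--Schmidt, and to identify every $t$ for which $T_q^p(m)$ occurs as a summand of $T_q^p(t)\otimes V_q^p$ together with its multiplicity. Since $m=bp^r\ell-1\equiv -1\pmod\ell$, inspection of Lemma~\ref{small tilt times V} and Proposition~\ref{q-not-2} shows that such a $t$ must lie in residue class $0$ or $-2$ modulo $\ell$.

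The two ``generic'' contributions are immediate. At $t=m-1$, the leading summand $T_q^p(t+1)=T_q^p(m)$ of $T_q^p(m-1)\otimes V_q^p$ (this decomposition is the case $s=bp^r$ of the preceding corollary) contributes multiplicity $1$, giving the first term on the right-hand side. At $t=m+1=bp^r\ell\equiv 0\pmod\ell$, Proposition~\ref{q-not-2} gives $T_q^p(t-1)^{\oplus 2}=T_q^p(m)^{\oplus 2}$, producing the factor $2$ in the second term.

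For the remaining $t\equiv -2\pmod\ell$, I would write $t=s'\ell-2$ and invoke Proposition~\ref{-2} to obtain $T_q^p(t)\otimes V_q^p\simeq (T(s'-2)\otimes V)^{[q]}\otimes T_q^p(\ell-1)\oplus T_q^p(s'\ell-3)^{\oplus c_\ell}$, where $c_\ell=1$ for $\ell>2$ and $c_\ell=2$ for $\ell=2$. By Proposition~\ref{q-Donkin}, the multiplicity of $T_q^p(m)=T(bp^r-1)^{[q]}\otimes T_q^p(\ell-1)$ in the first summand equals the multiplicity of the Steinberg-type module $T(bp^r-1)$ in $T(s'-2)\otimes V$, while the summand $T_q^p(s'\ell-3)$ never equals $T_q^p(m)$ in the relevant range. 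Combining the case analyses of Propositions~\ref{not-2} and~\ref{=2p-2} with the Donkin tensor product theorem~\ref{Donkin}, one finds that $T(bp^r-1)$ appears in $T(k)\otimes V$ precisely for $k=bp^r-2+2p^s$ with $0\le s\le r$, each with multiplicity $1$ except that the $s=r$ term is absent when $b\equiv -1\pmod p$. Setting $s'=k+2$ then produces exactly the sum $\sum_{s=0}^{r-1}$ together with the conditional $s=r$ term in the statement. The hard part will be this last bookkeeping: tracking the Steinberg-class multiplicities uniformly in $s$ through the nested Donkin decomposition, and verifying that the $\ell=2$ doubling noted in Remark~\ref{l=2} does not perturb the final coefficients.
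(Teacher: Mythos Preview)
Your overall strategy is exactly the paper's: the corollary is meant to be read off from the decompositions of $T_q^p(t)\otimes V_q^p$ established immediately before it (Proposition~\ref{q-not-2}, Proposition~\ref{-2}, Corollary~\ref{q-2}, and the corollary for $s$ divisible by $p$), by listing all $t$ for which $T_q^p(m)$ occurs as a summand and recording the multiplicity. Your identification of the relevant residue classes modulo $\ell$, of the reduction via Proposition~\ref{-2} to the multiplicity of $T(bp^r-1)$ in $T(k)\otimes V$, and of the contributions from $t=m-1$ and $t=m+1$, is all correct.

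The genuine gap is in your central multiplicity claim. For $k=bp^r-2+2p^s$ with $0\le s\le r-1$ the multiplicity of $T(bp^r-1)$ in $T(k)\otimes V$ is $2$, not $1$. When $s=0$ one has $k=bp^r\equiv 0\pmod p$, and Proposition~\ref{not-2} produces the summand $T(k-1)^{\oplus 2}$. When $1\le s\le r-1$ one has $k\equiv -2\pmod p$; the exponent $r'$ in Proposition~\ref{=2p-2} equals $s$ and the corresponding $m_1=bp^{r-s}\equiv 0\pmod p$, so that proposition puts us in its middle case and the summand $T(k+1-2p^s)=T(bp^r-1)$ again occurs with multiplicity $2$. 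This is precisely the coefficient $2$ in front of $\sum_{s=0}^{j-1}$ in Corollary~\ref{-1}; your ``multiplicity $1$'' contradicts that corollary directly.

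Your claim would indeed reproduce the displayed formula, but a check against the paper's own Tables~6 and~7 shows that the printed formula is missing this factor of $2$. For instance with $(\ell,p)=(3,2)$, $m=5$, $n=11$: the formula as stated gives $40+2\cdot34+1\cdot1+0=109$, whereas Table~7 has $((V_q^p)^{\otimes 11}:T_q^p(5))=110=40+2\cdot34+2\cdot1$. Similarly for $(\ell,p)=(2,3)$, $m=5$, $n=11$ one gets $102$ from the stated formula but $110$ from Table~6. So the statement carries a typo, and a correct execution of your plan proves the corrected version with $2\sum_{s=0}^{r-1}$ in place of $\sum_{s=0}^{r-1}$.
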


\begin{examplecounter}
We have illustrated the above algorithms by using them to calculate the tilting multiplicities when $(\ell, p) = (2,3)$, see Table  6, respectively $(\ell, p) = (3,2)$, see Table 7.  Note that for any pair $(\ell, p)$ the first $p\ell + \ell -1$ rows in such figures will coincide with the same rows in the case where $p = 0$ (compare e.g. the first  8 rows in Table  3 with the ones in Table  7. We have chosen minimum values of $(\ell, p)$ in order to see that the numbers do indeed differ in general and also in order to avoid too large numbers in our figures.

Note that in Table  6 we have in analogy with Table  4 only listed the odd number values. The even counterparts are then found via Remark \ref{l=2}.
\eject 

\centerline
{ \it Table  6.  Tilting multiplicities in $(V_q^p)^{\otimes n}$ for $\ell = 2$ and $p=3$}
\vskip .5cm
\noindent
\begin{tabular}{ r| c  c c c c c c c c c c c c c c c c}
  
   & 1 & 3 & 5 & 7 & 9 & 11 & 13 & 15 & 17& 19 &   \\  \hline 
  
  1 & 1 &  \\ 
  3 & 2 &  1 \\ 
  5 & 5& 4 & 1   \\
  7 & 14 &13 & 6 & 1\\
9&41 &40 & 27& 8 & 1 & \\
11 &122 &121& 110&44 &10  &1  \\
13&365 &364 &429 &208&64&12&1 \\
15& 1094&1093&1638&909&336 &90&14 & 1& \\
17&3281&3280&6188&3792&1581&544&119&16&1\\
19 &9842&9841&23256&15353&6954&2907&798&151&18&1\\

\end{tabular}

\vskip 1cm
\centerline
{ \it Table   7.  Tilting multiplicities in $(V_q^p)^{\otimes n}$ for $\ell = 3$ and $p=2$}

\vskip .5 cm
\noindent
\begin{tabular}{ r| c  c |c c c| c c c | c c c| c c c |c c c}
  
   & 0 & 1 & 2 & 3 & 4 & 5 & 6 & 7 & 8 & 9 & 10 & 11 & 12 & 13 & 14 & 15 & 16  \\  \hline 
  0 & 1  &&&&&&&&&&&&&&&  \\ 
  1 &  & 1&&&&&&&&&&&&&&& \\ 
  2 & 1 & & 1&&&&&&&&&&&&&& \\ 
  3 & & 1 &   & 1 &&&&&&&&&&&&&\\ 
  4 & 1 & & 3 & & 1&&&&&&&&&& \\
5 & &1 & & 4 & & 1 &&&&&&&&& \\
6 &1 && 9& &4 & &1  &&&&&&&&&\\
7 & &1 & &13&&6&&1&&&&&& \\
8 & 1&&27 &&13 &&7 & &1 &&&&&&\\
9 &&1&&40&&27&&7&&1&&&&&&\\
10 &1&&81&&40&&34&&9&&1&&&&&&\\
11&&1&&121&&110&&34&&10&&1&&&&\\
12 &1&&243&&121&&144&&54&&10&&1&&&\\
13 &&1&&364&&429&&144&&64&&12&&1&&\\
14 &1&&729&&364&&573&&272&&64&&13&&1&\\
15&&1&&1093&&1638&&573&&336&&90&&13&&1&\\
16 &1&&2187&&1093&&2211&&1245&&336&&103&&15&&1

\end{tabular}
\end{examplecounter}

\subsection{$q$-Steinberg class multiplicities in $(V_q^p)^{\otimes n}$}

As in Section 4 we shall give an alternative way of finding the tilting multiplicities $((V_q^p)^{\otimes n}:T_q^p(m))$ for those $m$ which are congruent to $-1$ modulo $\ell$. Since the Weyl multiplicities of $(V_q^p)^{\otimes n}$ are known we need to write (in the Grothendieck group) the relevant Weyl modules as $\Z$-linear combinations of indecomposable tilting modules.

The dot-action of $\ell$ on $\Z$ is given by $\ell \cdot r = \ell(r+1) - 1$. Then $\ell \cdot \Z_{\geq 0}$ is the set of non-negative integers congruent to $-1$ modulo $\ell$. In analogy with (\refeq{Steinberg Weyl modules}) and  (\refeq{Steinberg tilting modules}) we have (with $St_q^p = \Delta_q^p(\ell -1)$)

\begin{equation} \label{q-Steinberg Weyl modules}
\Delta_q^p(\ell\cdot m) \simeq  \Delta(m)^{[q]} \otimes St_q^p,
\end{equation}
and
\begin{equation} \label{q-Steinberg tilting modules}
T_q^p(\ell\cdot m) \simeq  T(m)^{[q]} \otimes St_q^p.
\end{equation}

Using (\refeq{q-Steinberg Weyl modules}) and  (\refeq{q-Steinberg tilting modules}) we get arguing exactly as in Section 5.5 (and using the notation $a_{n,m}$ and $d_{r,s}$ from there)
\begin{prop} \label{q-Steinberg-mult}
Let $n,s \in \Z_{\geq 0}$. Then $((V_q^p)^{\otimes n}:T_q^p(\ell\cdot s)) = \sum _r  a_{n,\ell \cdot r} d_{r,s}$ where the sum runs over those $r$ which satisfy
 $r \geq s$, $\ell \cdot r \leq n$ and $s \in (r + 2p\Z) \cup (r' + 2p\Z)$.
\end{prop}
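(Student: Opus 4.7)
The plan is to mirror the argument behind Proposition~\ref{Steinberg mult} of Section~4.7, with the classical Frobenius $F$ replaced throughout by the quantum Frobenius $F_q\colon U_q\to U_1$. The starting point is that $(V_q^p)^{\otimes n}$ is tilting, so its class in the Grothendieck group admits two expansions,
$$[(V_q^p)^{\otimes n}] \;=\; \sum_m a_{n,m}\,[\Delta_q^p(m)] \;=\; \sum_m ((V_q^p)^{\otimes n}:T_q^p(m))\,[T_q^p(m)].$$

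First I would invoke the quantum linkage principle to project onto the ``$\ell$-Steinberg block,'' consisting of modules whose composition factors have highest weight congruent to $-1$ modulo $\ell$. By linkage, every Weyl factor of $T_q^p(\ell\cdot r)$ has highest weight of the form $\ell\cdot s$, and conversely no $T_q^p(m)$ with $m\not\equiv -1 \; (\mo \; \ell)$ contributes a Weyl factor $\Delta_q^p(\ell\cdot s)$. Reading off the coefficient of $[\Delta_q^p(\ell\cdot s)]$ on both sides of the displayed identity thus yields
$$a_{n,\ell\cdot s} \;=\; \sum_r ((V_q^p)^{\otimes n}:T_q^p(\ell\cdot r))\,(T_q^p(\ell\cdot r):\Delta_q^p(\ell\cdot s)).$$

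Next I would identify the internal multiplicity $(T_q^p(\ell\cdot r):\Delta_q^p(\ell\cdot s))$ with the classical number $c_{r,s}=(T(r):\Delta(s))$ from Section~4.7. This is where \eqref{q-Steinberg tilting modules} and \eqref{q-Steinberg Weyl modules} enter: any Weyl filtration of $T(r)$ with factors $\Delta(s)$ of multiplicity $c_{r,s}$ pulls back through $F_q$ to a filtration of $T(r)^{[q]}$ with factors $\Delta(s)^{[q]}$, and tensoring this filtration by $St_q^p$ (an exact operation) produces a Weyl filtration of $T_q^p(\ell\cdot r) = T(r)^{[q]}\otimes St_q^p$ whose subquotients are $\Delta(s)^{[q]}\otimes St_q^p \simeq \Delta_q^p(\ell\cdot s)$, again appearing exactly $c_{r,s}$ times. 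Inserting this into the preceding display and inverting the lower-triangular matrix $(c_{r,s})$ via $(d_{i,j})$ produces the claimed formula.

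To finish I would check that the summation range is exactly as stated: lower triangularity of $(d_{i,j})$ forces $r\geq s$; the bound $a_{n,\ell\cdot r}=0$ unless $\ell\cdot r\leq n$ is the highest-weight bound on Weyl factors of $(V_q^p)^{\otimes n}$; and the linkage condition $s\in (r+2p\Z)\cup(r'+2p\Z)$ is precisely the modular $SL_2$ linkage satisfied by $c_{r,s}$, which is inherited block-by-block by its inverse $d_{r,s}$. The main obstacle I expect is making the middle step watertight: I need to know that pulling back a Weyl filtration through $F_q$ and then tensoring with $St_q^p$ really yields a Weyl filtration of $T_q^p(\ell\cdot r)$ with precisely the factors $\Delta_q^p(\ell\cdot s)$ (as opposed to a filtration whose subquotients merely have these as composition factors); once that is secured, everything else is bookkeeping around the inverse of a lower-triangular block matrix.
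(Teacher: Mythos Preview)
Your proposal is correct and is exactly the argument the paper has in mind: the paper's own proof simply says to argue as in Section~4.7 using \eqref{q-Steinberg Weyl modules} and \eqref{q-Steinberg tilting modules}, which is precisely what you do. Your worry about the middle step is unfounded, since \eqref{q-Steinberg Weyl modules} is a genuine module isomorphism (not merely an identity in $\mathcal K$), so the filtration you construct really has Weyl modules as its subquotients.
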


\subsection{Fusion}
Again in this case we have a fusion quotient of the category of tilting modules for $U_q$. It is defined by dividing out by the ideal generated by $T_q^p(m)$ with $m \geq \ell -1$ (note that this is indeed a tensor ideal by Proposition \ref{q-not-2} and Corollary \ref{q-2}). We leave to the reader the task of going through the steps in Sections 3.3 and 4.8.
The conclusion is that we have a fusion tensor category with tensor product denoted $\underline \otimes$ satisfying

\begin{cor}. Let $q_0$ be a root of unity in a field of characteristic $0$ and suppose its order is $\ell$. Then we have for all $n \in \Z_{\geq 0}$ and all $m \in [0, \ell -2]$
$$ ((V_q^p)^{\underline \otimes n}: T_q^p(m)) =  (V_{q_0}^{\underline \otimes n}: T_{q_0}(m)). $$
\end{cor}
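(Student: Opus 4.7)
The plan is to show that both sides satisfy the same recurrence with the same initial conditions, and then conclude by induction on $n$.

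First I would establish the mixed-characteristic analogue of Proposition \ref{fusion product}: for $0 \leq m \leq \ell-2$,
$$T_q^p(m) \underline\otimes V_q^p \simeq \begin{cases} T_q^p(m-1) \oplus T_q^p(m+1) & \text{if } m < \ell-2, \\ T_q^p(\ell-3) & \text{if } m = \ell-2. \end{cases}$$
This follows exactly as in the characteristic zero case: by Lemma \ref{small m mixed}(1) we have $T_q^p(m) = \Delta_q^p(m)$ for all $m < \ell$, so the short exact sequence from Section 5.1 identifies $T_q^p(m) \otimes V_q^p$ in the Grothendieck group with $\Delta_q^p(m-1) \oplus \Delta_q^p(m+1)$. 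When $m < \ell-2$ both summands are in the fusion category, and when $m = \ell-2$ the summand $T_q^p(\ell-1)$ is negligible (it has highest weight $\ell-1$ and quantum dimension $0$, so it belongs to $\mathcal N_q^p$) and is killed by $\pr$, leaving only $T_q^p(\ell-3)$. This is also consistent with Proposition \ref{q-not-2} after discarding the summand $T_q^p(\ell-1)$.

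Next I would set $\underline c_{n,m} = ((V_q^p)^{\underline\otimes n} : T_q^p(m))$ for $0 \leq m \leq \ell-2$ and adopt the conventions $\underline c_{n,-1} = \underline c_{n,\ell-1} = 0$. The fusion rule above, together with additivity of $\underline\otimes$ in the first argument, yields
$$\underline c_{0,m} = \delta_{0,m}, \qquad \underline c_{n,m} = \underline c_{n-1,m-1} + \underline c_{n-1,m+1} \quad (n > 0,\; 0 \leq m \leq \ell-2).$$
By Corollary \ref{fusion numbers}, the numbers $\underline b_{n,m} = (V_{q_0}^{\underline\otimes n} : T_{q_0}(m))$ satisfy the very same recurrence with the very same initial and boundary conditions.

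Finally, a straightforward induction on $n$ gives $\underline c_{n,m} = \underline b_{n,m}$ for all $n \geq 0$ and all $m \in [0, \ell-2]$, which is the desired equality. The only step requiring genuine argument is the fusion rule, and that reduces to checking that $T_q^p(\ell-1)$ is negligible and that $T_q^p(m) = \Delta_q^p(m)$ for $m < \ell$; both are immediate from Lemma \ref{small m mixed} and the definition of $\mathcal N_q^p$. Everything else is formal transport of the argument in Section 3.3.
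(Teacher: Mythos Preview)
Your proof is correct and follows essentially the same route as the paper intends: the paper explicitly leaves this corollary to the reader, pointing to Sections~3.3 and 4.8, and your argument is precisely the transport of those steps to the mixed case --- establish the fusion rule for $T_q^p(m)\,\underline\otimes\,V_q^p$ (the analogue of Proposition~\ref{fusion product}/\ref{small tilt times V}), read off the recurrence of Corollary~\ref{fusion numbers}, and conclude by induction. One small remark: for the fusion rule you could also cite Lemma~5.3 directly (the ``otherwise'' case there gives $T_q^p(m)\otimes V_q^p \simeq T_q^p(m-1)\oplus T_q^p(m+1)$ for $m\leq \ell-2$) rather than rederiving it from the short exact sequence, and in Section~5.6 the paper defines the negligible ideal as generated by the $T_q^p(m)$ with $m\geq \ell-1$, so the negligibility of $T_q^p(\ell-1)$ is by definition rather than requiring a quantum-dimension computation.
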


\section{Modules for the Temperley-Lieb algebras}

Let $n \in Z_{\geq 0}$ and denote by $TL_n(q + q^{-1})$ the Temperley-Lieb algebra on $n$ strands with parameter $ q + q^{-1}$. Here $q$ can be an arbitrary element in $k\setminus \{0\}$ but as we shall see the module category for $TL_n(q + q^{-1})$ is most interesting when $q$ is a root of unity.

\subsection{Cell modules for Temperley-Lieb algebras}

It is wellknown that $TL_n(q + q^{-1}) \simeq \End_{U_q}(V_q^{\otimes n})$, see \cite{AT}. Here $U_q = U_q(sl_2)$ is the quantum group for the Lie algebra $sl_2$ and $V_q$ is the $2$-dimensional simple module for $U_q$. As we have seen $V_q^{\otimes n}$ is always (i.e. for all $k$ and all $q$) a tilting module for $U_q$. Therefore $\End_{U_q}(V_q^{\otimes n})$ is a cellular algebra by \cite {AST1}. The cell modules for this algebra are
$$C_n^q(m) = \Hom_{U_q}(\Delta_q(m), V_q^{\otimes n}), \; m \in [0, n] \cap (n + 2\Z).$$
Set now $\Lambda_n =  [0, n] \cap (n + 2\Z)$. By standard tilting theory (see e.g \cite{AST1}) this gives via  \refeq{Weyl mult} the (wellknown) formula

\begin{prop} \label{celldim}
Let $m \in  \Lambda_n$ and set $r = (n-m)/2$. Then
$$ \dim C_n^q(m) = (V_q^{\otimes n} : \Delta_q(m)) = \binom{n}{r} - \binom{n}{r-1}.$$
\end{prop}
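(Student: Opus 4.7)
The plan is to split the statement into its two equalities and handle each by a piece of tilting theory already in hand. The second equality is exactly the formula \eqref{Weyl mult} for Weyl-factor multiplicities in $V_q^{\otimes n}$ derived in Section 2.1, so the only real content is the first equality $\dim C_n^q(m) = (V_q^{\otimes n} : \Delta_q(m))$.

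For that first equality I would exploit two standard facts about $\Uq$. Since $V_q$ is self-dual, $V_q^{\otimes n}$ is a tilting module and hence admits a filtration by dual Weyl modules $\nabla_q(\mu)$; moreover the multiplicity of any $\nabla_q(\mu)$ in such a filtration equals $(V_q^{\otimes n}:\Delta_q(\mu))$, since the characters of $\Delta_q(\mu)$ and $\nabla_q(\mu)$ coincide and both filtration types impose the same linear constraint on $\Char V_q^{\otimes n}$. Combined with the usual Ext-orthogonality
$$\Hom_{\Uq}(\Delta_q(m), \nabla_q(\mu)) = k\cdot\delta_{m,\mu}, \qquad \Ext^1_{\Uq}(\Delta_q(m), \nabla_q(\mu)) = 0,$$
applying $\Hom_{\Uq}(\Delta_q(m), -)$ to the successive short exact sequences of a $\nabla$-filtration of $V_q^{\otimes n}$ turns the associated long exact sequences into short exact pieces. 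A straightforward induction on the filtration length then gives
$$\dim \Hom_{\Uq}(\Delta_q(m), V_q^{\otimes n}) = (V_q^{\otimes n} : \nabla_q(m)) = (V_q^{\otimes n} : \Delta_q(m)),$$
which is the first equality by definition of $C_n^q(m)$.

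No step here is hard; this is exactly the cellular/tilting dictionary of \cite{GL}, \cite{AST1} specialised to $\Uq(sl_2)$. The one point worth flagging is that the argument never uses the specifics of the base field $k$ or the parameter $q$: Ext-orthogonality of Weyl and dual Weyl modules and the existence of $\nabla$-filtrations on tilting modules hold uniformly, which is precisely why Proposition \ref{celldim} produces a single closed formula valid for all $k$ and all $q\in k\setminus\{0\}$.
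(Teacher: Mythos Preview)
Your argument is correct and is precisely the standard tilting-theory computation the paper invokes: the paper does not spell out a proof but simply says ``By standard tilting theory (see e.g.\ \cite{AST1}) this gives via \eqref{Weyl mult} the (wellknown) formula'', and your $\nabla$-filtration plus Ext-orthogonality argument is exactly what that citation unpacks to.
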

  
\begin{remarkcounter}
Observe that the dimension formula in this proposition is valid for all $k$ and all $q$. Note that both $TL_n(q + q^{-1})$ and $U_q$ have integral versions, i.e. there are algebras $TL_n^{A}(v + v^{-1})$, respectively $U_A$, over $A = \Z[v,v^{-1}]$ with $TL_n(q + q^{-1}) \simeq TL_n^{A}(v+v^{-1})\otimes_A k$ and $U_q \simeq U_A \otimes_A k$. Here $k$ is considered an $A$-algebra via $v \mapsto q$. The Weyl modules have integral versions $\Delta_A(m)$ (in particular $V_A = \Delta_A(1)$) which are free over $A$. From this we see that also the cell modules $C_{n}^A(m) = \Hom_{U_A}(\Delta_A(m), V_A^{\otimes n})$ for $\End_{U_A}(V_A^{\otimes n})$ are free over $A$ and satisfy $C_{n}^A(m) \otimes_A k \simeq C_n^q(m)$. This explains that the dimensions of the cell modules are independent of $k$ and $q$.

The cell modules for $TL_n(q+q^{-1})$  all have unique simple heads. By \cite{AST1} the module category for $TL_n(q + q^{-1}) \simeq \End_{U_q}(V_q^{\otimes n})$ is semisimple iff $V_q^{\otimes n}$ is semisimple as a module for $U_q$. This is the case for all $n$ when $q$ is not a root of unity in $k$. It is also true if $q = \pm 1$ and $k$ has characteristic $0$ (in both these cases all finite dimensional modules for $U_q$ are semisimple). In these cases the cell modules are therefore simple and give up to isomorphisms all simple modules for $TL_n(q + q^{-1})$. Their dimensions are in this case given by Proposition \ref{celldim}.

\end{remarkcounter}

\subsection{Dimensions of simple modules}

We denote for $m \in \Lambda_n$ by $D_n^q(m)$ the head of $C_n^q(m)$. Then these $D_n^q(m)$'s constitute up to isomorphisms the simple modules for $TL_n(q + q^{-1})$. We shall provide algorithms which determine the dimensions of these simple modules. In the case where $q$ is a non-root of unity we have already obtained a closed formula  for these dimensions. So in the following we assume that $q$ is a root of unity. We have the important identity, see Theorem 4.12 in \cite{AST1}. 
\begin{prop} \label{dim-formula}
If $m \in \Lambda_n$ then $\dim D_n^q(m) = (V_q^{\otimes n} : T_q(m)).$
\end{prop}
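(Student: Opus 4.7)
The plan is to deduce this from the cellular structure of endomorphism rings of tilting modules developed in \cite{AST1}, applied to $T = V_q^{\otimes n}$. Write $T \simeq \bigoplus_j T_q(j)^{\oplus b_j}$ with $b_j = (T:T_q(j))$, so that $B := \End_{U_q}(T) \simeq TL_n(q+q^{-1})$ is cellular with cell modules $C_n^q(m) = \Hom_{U_q}(\Delta_q(m), T)$ indexed by $m \in \Lambda_n$.

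First I would compute $\dim C_n^q(m)$. Using the $\Ext$-vanishing $\Ext^1_{U_q}(\Delta_q(m), T) = 0$ for tilting $T$ (a consequence of $\Ext^1(\Delta_q(m), \nabla_q(m')) = 0$ together with the existence of good filtrations on tiltings), applying $\Hom_{U_q}(\Delta_q(m), -)$ to the tilting decomposition of $T$ gives
\[
\dim C_n^q(m) \;=\; \sum_j b_j\,\dim \Hom_{U_q}(\Delta_q(m), T_q(j)) \;=\; \sum_j b_j\,(T_q(j):\Delta_q(m)) \;=\; (T:\Delta_q(m)),
\]
which reproves Proposition \ref{celldim}.

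Next I identify the radical of the cell form. Since $V_q$ is self-dual, so is $T$, and the cell form on $C_n^q(m)$ sends $(\varphi, \psi)$ to the scalar obtained from the composition $\Delta_q(m) \xrightarrow{\varphi} T \xrightarrow{\psi^{\vee}} \nabla_q(m)$, where $\psi^{\vee}$ is the dual of $\psi$ under $T \simeq T^*$. The key claim is that this form has radical equal to the subspace of $\varphi$ whose image is contained in $\bigoplus_{j>m} T_q(j)^{\oplus b_j}$, i.e.\ of those $\varphi$ factoring through a tilting summand of strictly larger highest weight. This is exactly the information packaged by the cellular structure of \cite{AST1}: after self-duality, the pairing of $\varphi$ against itself on the $T_q(m)$-component reduces to the scalar detecting the composition $\Delta_q(m) \hookrightarrow T_q(m) \twoheadrightarrow \nabla_q(m)$, which is nonzero, while any contribution from a $T_q(j)$ with $j>m$ necessarily composes to zero in $\nabla_q(m)$ for parity/weight reasons.

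Combining the two steps,
\[
\dim D_n^q(m) \;=\; \dim C_n^q(m) \;-\; \sum_{j>m} b_j\,(T_q(j):\Delta_q(m)) \;=\; b_m\,(T_q(m):\Delta_q(m)) \;=\; b_m,
\]
since $(T_q(m):\Delta_q(m)) = 1$ by the defining property of the indecomposable tilting module. The main obstacle is the radical identification in the middle step, where one must check that the cell form becomes non-degenerate after quotienting by the contributions from strictly higher-weight summands; this rests on the self-duality of $T$, $\Ext$-vanishing for Weyl-into-tilting, and the locality of $\End_{U_q}(T_q(m))$ with residue field $k$, all of which are encoded in the cellular framework of \cite{AST1}. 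The rest is a dimension count.
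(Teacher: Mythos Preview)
The paper does not prove this proposition at all; it simply cites Theorem 4.12 of \cite{AST1}. Your sketch is essentially an outline of the argument behind that theorem, and it is correct in substance.

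One comment on the step you flag as the ``main obstacle''. The radical identification can be made precise as follows. For $j\neq m$, any $\varphi\in\Hom_{U_q}(\Delta_q(m),T_q(j))$ and $g\in\Hom_{U_q}(T_q(j),\nabla_q(m))$ satisfy $g\circ\varphi=0$. Indeed, by the $\Ext^1$-vanishing you already invoked, $\varphi$ extends to $\tilde\varphi\colon T_q(m)\to T_q(j)$ and $g$ lifts to $\tilde g\colon T_q(j)\to T_q(m)$; since $T_q(m)\not\simeq T_q(j)$ are indecomposable, $\tilde g\circ\tilde\varphi$ lies in the maximal ideal of the local ring $\End_{U_q}(T_q(m))$. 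But the map $\End_{U_q}(T_q(m))\to\Hom_{U_q}(\Delta_q(m),\nabla_q(m))=k$ sending $f$ to $\pi\circ f\circ\iota$ (with $\iota,\pi$ the essentially unique inclusion and projection) is a $k$-algebra homomorphism, because $f\circ\iota$ is a scalar multiple of $\iota$; hence its kernel is exactly that maximal ideal, and $g\circ\varphi=\pi\circ(\tilde g\circ\tilde\varphi)\circ\iota=0$. For $j=m$ the pairing is the nonzero scalar $\pi\circ\iota$, so the restriction of the cell form to $\Hom_{U_q}(\Delta_q(m),T_q(m)^{\oplus b_m})\cong k^{b_m}$ is nondegenerate. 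This gives precisely the rank computation you need and closes the gap you identified.
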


We now separate into the three different cases considered in Sections 3-5.

\subsubsection{Non-trivial roots of unity in characteristic $0$}

Consider a field $k$ of characteristic $0$ and $q \in k$ a root of unity. Denote by $\ell$ the order of $q^2$. We shall assume $\ell > 1$ as otherwise we are in the semisimple case discussed above. We can then describe the algorithm obtained i Section 3 as follows.

Let $P = (p_{n,m})$ denote the non-negative half of the Pascal triangle. Its entries are given by  $p_{0,m} = \delta_{0,m}$, $p_{n,m}= 0$ for $m < 0$,  and for $n>0$ we have $p_{n,m} = p_{n-1,m-1}  + p_{n-1,m+1}$. It is then easy to check that in fact
$p_{n,m} = \binom{n}{r} - \binom{n}{r-1}$ with $r = (n-m)/2$. In other words (see Proposition \ref{celldim}) we have $p_{n,m} = \dim C_n^q(m)$, the dimension of the $m$'th cell module for $TL_n(q + q^{-1})$.

We shall now construct a quantum $\ell$-version $P_q(\ell) = (p_{n,m}^q(\ell))$ of $P$:

First we set $p_{0,m}^q(\ell) = p_{0,m}$ for all $m$ and $p_{n,m}^q(\ell) = 0$ if $m < 0$. If $n>0$ we set 

$$ p_{n,m}^q(\ell) = \begin{cases} {p_{n,m} \text { if } m>0  \text  { with } m\equiv -1 \; (\mo \; \ell)} \\ {p_{n-1,m-1}^q(\ell)  \text { if } m>0 \text  { with } m\equiv -2 \; (\mo \; \ell)} \\ {p_{n-1,m-1}^q(\ell)  + p_{n-1,m+1}^q(\ell) \text { otherwise. }} \end{cases} $$

Then

\begin{prop}
The dimensions of the simple modules are given by $\dim D_n^q(m) = p_{n,m}^q(\ell)$ for all $m \in \Lambda_n$.
\end{prop}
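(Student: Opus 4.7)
The plan is to identify $p_{n,m}^q(\ell)$ with the tilting multiplicity $b_{n,m} = (V_q^{\otimes n}:T_q(m))$, and then quote Proposition \ref{dim-formula} to conclude $\dim D_n^q(m) = b_{n,m} = p_{n,m}^q(\ell)$. The identification is a straightforward induction on $n$: both sequences $(p_{n,m}^q(\ell))_{n,m}$ and $(b_{n,m})_{n,m}$ satisfy the same initial conditions (at $n=0$ we have $b_{0,m} = \delta_{0,m} = p_{0,m}^q(\ell)$, and $b_{n,m} = 0 = p_{n,m}^q(\ell)$ for $m < 0$), so it suffices to show they obey the same recurrence in $n$.

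First I would split according to the residue of $m$ modulo $\ell$ and verify the recurrences case by case, using Corollary \ref{q-tilt recurrence} for $b_{n,m}$ when $\ell > 2$. If $m \equiv -1 \pmod{\ell}$, then Corollary \ref{q-tilt recurrence}(1) gives $b_{n,m} = a_{n,m} = \binom{n}{r} - \binom{n}{r-1} = p_{n,m}$, which is exactly the first branch in the definition of $p_{n,m}^q(\ell)$. If $m \equiv m_0 \pmod{\ell}$ with $0 \leq m_0 < \ell-2$, then Corollary \ref{q-tilt recurrence}(2) gives $b_{n,m} = b_{n-1,m-1} + b_{n-1,m+1}$, matching the third branch. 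If $m \equiv \ell-2 \pmod{\ell}$, then Corollary \ref{q-tilt recurrence}(3) gives $b_{n,m} = b_{n-1,m-1}$, matching the second branch. In each case the induction hypothesis on $n-1$ lets us replace $b_{n-1,\cdot}$ by $p_{n-1,\cdot}^q(\ell)$, giving the required identity.

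The one case requiring separate attention is $\ell = 2$, which falls outside the hypothesis of Corollary \ref{q-tilt recurrence} but is covered by Remark \ref{q-tilt algorithm}(3). There one has $b_{n,m} = a_{n,m}$ for odd $m$ (matching the first branch of the definition, since odd $m$ means $m \equiv -1 \pmod 2$) and $b_{n,m} = a_{n-1,m-1}$ for positive even $m$; the latter equals $p_{n-1,m-1} = p_{n-1,m-1}^q(2)$ by the already-settled odd case applied one step earlier, matching the second branch of the definition (since positive even $m$ means $m \equiv -2 \equiv 0 \pmod 2$). Thus the recurrences agree in this case too.

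I do not expect any serious obstacle: the whole point of the definition of $p_{n,m}^q(\ell)$ is that it encodes exactly the recurrence satisfied by $b_{n,m}$, and the link $\dim D_n^q(m) = b_{n,m}$ has already been established as Proposition \ref{dim-formula}. The only mild bookkeeping issue is ensuring that the $m=0$ boundary behaves correctly (where the recursion for $b_{n,0}$ implicitly uses $b_{n-1,-1}=0$); this is handled by our stipulation $p_{n,m}^q(\ell) = 0$ for $m<0$ and is consistent with the convention $\Delta_q(-1)=0$ used throughout Section 3.
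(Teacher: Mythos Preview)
Your proposal is correct and follows exactly the paper's approach: the paper's own proof reads in full ``This is Proposition \ref{dim-formula} combined with the algorithm in Corollary \ref{q-tilt recurrence},'' which is precisely your plan of identifying $p_{n,m}^q(\ell)$ with $b_{n,m}$ via the shared recurrence and then invoking the dimension formula. Your write-up is simply a spelled-out version of this (and even goes further than the paper by treating $\ell=2$ via Remark \ref{q-tilt algorithm}(3), which the paper's one-line proof leaves implicit).
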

\begin{proof} This is Proposition \ref{dim-formula} combined with the algorithm in Corollary \ref{q-tilt recurrence}.

\end{proof}

\begin{examplecounter}
Suppose $\ell = 5$. Then this proposition gives that the simple modules for $TL_{16}(q + q^{-1})$ have dimensions: $610, 987, 3640, 2445, 820, 440, 103, 15, 1$, cf. the bottom row in Table  2. If instead we consider $\ell = 3$ then the corresponding dimensions are $1, 3432, 1429, 2211, 1260, 337, 103, 15, 1$ as is seen from Table  3.
\end{examplecounter}

\subsubsection{$q=1$ in characteristic $p>0$}
Consider a field $k$ of characteristic $p>0$ and let $q = 1 \in k$. We  write $C_n(m)$ for the cell modules and $D_n(m)$ for the simple modules for $TL_n(2)$. This time we need a $p$-version $P(p) = (p_{n,m}(p))$ of the Pascal triangle. It is given by: First we set $p_{0,m}(p) = p_{0,m}$ for all $m$ and $p_{n,m}(p) = 0$ if $m < 0$. If $n>0$ we set 

$$ p_{n,m}(p) = \begin{cases} {\sum_{r \geq 0}p_{n,p\cdot r}d_{r,s} \text { if } m = p\cdot s >0  \text  { for some } s \geq 0, } \\ {p_{n-1,m-1}(p)  \text { if } m>0 \text  { with } m\equiv -2 \; (\mo \; p),} \\ {p_{n-1,m-1}(p)  + p_{n-1,m+1}(p) \text { otherwise. }} \end{cases} $$
(here the numbers  $d_{r,s}$  are those introduced in Section 4.7).

This time we have 
\begin{prop}
The dimensions of the simple modules for $TL_n(2)$ are given by $\dim D_n(m) = p_{n,m}(p)$ for all $m \in \Lambda_n$.
\end{prop}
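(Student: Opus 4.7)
The plan is to combine Proposition \ref{dim-formula} with the recurrences established in Section 4. By Proposition \ref{dim-formula}, it suffices to show that $p_{n,m}(p) = (V^{\otimes n}:T(m))$ for all $n \geq 0$ and all $m \in \Lambda_n$. I would prove this by induction on $n$. The base case $n=0$ is immediate: $p_{0,m}(p) = \delta_{0,m}$ by definition, and $V^{\otimes 0} = k = T(0)$, so $(V^{\otimes 0}:T(m)) = \delta_{0,m}$ as well.

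For the inductive step, fix $n \geq 1$ and assume the equality for $n-1$. The verification splits into three cases corresponding to the three branches of the definition of $p_{n,m}(p)$. First, if $m = p \cdot s$ for some $s \geq 0$ (the $m \equiv -1 \pmod{p}$ case), the defining formula reads $p_{n,m}(p) = \sum_{r \geq 0} p_{n, p \cdot r}\, d_{r,s}$, where the factors $p_{n, p \cdot r}$ are the ordinary Pascal triangle entries, which equal the Weyl multiplicities $a_{n, p \cdot r}$ by \eqref{Weyl mult}. Hence this sum is exactly the right-hand side of Proposition \ref{Steinberg mult}, so it equals $(V^{\otimes n}:T(p \cdot s))$. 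Note that this case is handled by a direct (non-inductive) appeal to the Steinberg class formula.

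Second, if $m \equiv -2 \pmod{p}$, then by definition $p_{n,m}(p) = p_{n-1, m-1}(p)$, which by the induction hypothesis equals $(V^{\otimes n-1}:T(m-1))$, and Corollary \ref{not -1} (first case) identifies this with $(V^{\otimes n}:T(m))$. Third, for all remaining $m$ (i.e.\ $m \not\equiv -1, -2 \pmod{p}$), the definition gives $p_{n,m}(p) = p_{n-1, m-1}(p) + p_{n-1, m+1}(p)$, which by the induction hypothesis and Corollary \ref{not -1} (second case) equals $(V^{\otimes n-1}:T(m-1)) + (V^{\otimes n-1}:T(m+1)) = (V^{\otimes n}:T(m))$. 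This completes the induction, and Proposition \ref{dim-formula} then yields the stated identity $\dim D_n(m) = p_{n,m}(p)$.

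The only genuine subtlety is the Steinberg branch: one must recognise that on the right-hand side of the defining formula the symbols $p_{n, p\cdot r}$ refer to the unmodified Pascal entries (and hence to $a_{n, p\cdot r}$), so that the branch is really a packaged version of Proposition \ref{Steinberg mult} rather than a self-referential recursion in the columns indexed by multiples of $p$. Once this is parsed correctly, the remaining two branches are immediate translations of the two sub-cases of Corollary \ref{not -1}, and the induction goes through without additional input.
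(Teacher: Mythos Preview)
Your proof is correct and follows essentially the same approach as the paper: the paper's proof is the single sentence ``This is Proposition~\ref{dim-formula} combined with Corollary~\ref{not -1} and Proposition~\ref{Steinberg mult},'' and you have simply unpacked that sentence into an explicit induction on $n$, handling the three branches of the definition of $p_{n,m}(p)$ via Proposition~\ref{Steinberg mult} (for $m\equiv -1$) and the two cases of Corollary~\ref{not -1} (for the rest). Your closing remark about reading $p_{n,p\cdot r}$ as the unmodified Pascal entries $a_{n,p\cdot r}$ is exactly the point that makes the Steinberg branch match Proposition~\ref{Steinberg mult}.
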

\begin{proof} This is Proposition \ref{dim-formula} combined with Corollary \ref{not -1} and Proposition \ref{Steinberg mult}.

\end{proof}

\begin{remarkcounter}
Alternatively, we could use the algorithm given in Section 4.6 to calculate the simple dimensions. Above we have chosen the one from Section 4.7 in order to stress the similarity to the quantum root of unity case in characteristic zero.
\end{remarkcounter}
 
\begin{examplecounter}
Suppose $p=3$. Then we get from this proposition and Table  5 the following dimensions for the simple modules for $TL_{16}(2)$: $1, 3417, 1428, 2108, 1260, 337, 103, 15, 1$. 

If instead $p = 2$ we find the relevant tilting multiplicities in Corollary \ref{recurrence p=2}. Via Table  4 we see that in the case of $TL_{16}(0)$ the list of dimensions of simple modules are: $128, 1912, 1288, 910, 336, 90, 14, 1$.
\end{examplecounter}

\subsubsection{Non-trivial roots of unity in characteristic $p>0$}

Consider a field $k$ of characteristic $p>0$ and let $q \in k$ be a root of unity. Set as usual $\ell$ equal to the order of $q^2$, and assume $\ell >1$. This time we write $C_n^{q,p}(m)$ for the cell modules and $D_n^{q,p}(m)$ for the simple modules for $TL_n(q+q^{-1})$. In analogy with the above we let now  $P(\ell, p) = (p_{n,m}(\ell, p))$ be the deformed Pascal triangle given by:  $p_{0,m}(\ell, p) = p_{0,m}$ for all $m$,  and $p_{n,m}(\ell, p) = 0$ if $m < 0$. Moreover, if $n>0$ then 
$$ p_{n,m}(\ell, p) = \begin{cases} {\sum_{r \geq 0}p_{n,\ell \cdot r}d_{r,s} \text { if } m = \ell \cdot s >0  \text  { for some } s \geq 0, } \\ {p_{n-1,m-1}(\ell, p)  \text { if } m>0 \text  { with } m\equiv -2 \; (\mo \; \ell),} \\ {p_{n-1,m-1}(\ell p)  + p_{n-1,m+1}(\ell, p) \text { otherwise. }} \end{cases} $$
(here the numbers  $d_{r,s}$  are those introduced in Section 4.7).

Again we get
\begin{prop}
The dimensions of the simple modules for $TL_n(q+q^{-1})$ are given by $\dim D_n^{q, p}(m) = p_{n,m}(\ell, p)$ for all $m \in \Lambda_n$.
\end{prop}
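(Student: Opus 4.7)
The plan mirrors the two preceding propositions in this subsection: first invoke Proposition~\ref{dim-formula} to turn the desired dimension formula into the identity
$$\dim D_n^{q,p}(m) = ((V_q^p)^{\otimes n}:T_q^p(m))$$
of tilting multiplicities, and then verify by induction on $n$ that the deformed Pascal number $p_{n,m}(\ell,p)$ satisfies exactly the recurrence proved for these tilting multiplicities in Section~5.

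The base case $n=0$ is trivial: only $m=0$ lies in $\Lambda_0$, and both sides equal $1$, while for $m>0$ both $p_{0,m}(\ell,p)$ and $((V_q^p)^{\otimes 0}:T_q^p(m))$ vanish. For the inductive step I would match each of the three clauses in the definition of $p_{n,m}(\ell,p)$ with the corresponding case from Section~5. If $m=\ell\cdot s$ (equivalently $m\equiv-1\;(\mo\;\ell)$), Proposition~\ref{q-Steinberg-mult} yields $((V_q^p)^{\otimes n}:T_q^p(\ell\cdot s))=\sum_{r\geq 0} a_{n,\ell\cdot r}\,d_{r,s}$; since the ordinary Pascal entry $p_{n,\ell\cdot r}$ coincides with the Weyl multiplicity $a_{n,\ell\cdot r}=((V_q^p)^{\otimes n}:\Delta_q^p(\ell\cdot r))$ by~\refeq{Weyl mult}, this matches the defining formula for $p_{n,m}(\ell,p)$ in this clause. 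If $m\equiv-2\;(\mo\;\ell)$ but $m\not\equiv-1\;(\mo\;\ell)$, Corollary~\ref{q-not-1} gives $((V_q^p)^{\otimes n}:T_q^p(m))=((V_q^p)^{\otimes n-1}:T_q^p(m-1))$, which by the inductive hypothesis equals $p_{n-1,m-1}(\ell,p)=p_{n,m}(\ell,p)$. In every remaining case Corollary~\ref{q-not-1} supplies the two-term additive recurrence, again matching the defining formula.

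A pleasant feature is that the Steinberg-class clause is not inductive in $n$ at all: it expresses the tilting multiplicity directly in terms of the Weyl multiplicities via the integer matrix $(d_{r,s})$ from Section~4.7, so no circularity can arise. The main subtlety I anticipate is residue-class bookkeeping, namely checking that the three clauses partition the admissible $m\in\Lambda_n$ and that the degenerate case $\ell=2$ is handled consistently: there every odd $m$ satisfies $m\equiv-1\;(\mo\;2)$ and is handled by the Steinberg branch, while every positive even $m$ satisfies $m\equiv-2\;(\mo\;2)$ and is handled by the shift-by-one branch, in perfect agreement with Remark~\ref{l=2}. Once this bookkeeping is in place, the induction closes and the proposition follows by the same combination of results that established its characteristic-zero and $q=1$ analogues.
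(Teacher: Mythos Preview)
Your proposal is correct and follows precisely the paper's own argument: the paper's proof is the one-line ``This is Proposition~\ref{dim-formula} combined with Corollary~\ref{q-not-1} and Proposition~\ref{q-Steinberg-mult},'' and your write-up simply unpacks that combination into the explicit clause-by-clause induction (including the helpful observation that the Steinberg-class branch is non-inductive and the $\ell=2$ bookkeeping via Remark~\ref{l=2}).
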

\begin{proof} This is Proposition \ref{dim-formula} combined with Corollary \ref{q-not-1} and Proposition \ref{q-Steinberg-mult}.

\end{proof}
 
\begin{examplecounter}
Suppose  $(\ell, p) =(2,3)$. Then we can via this proposition read off from Table  6 the dimensions for all simple modules for $TL_n(q+q^{-1})$ for $n \leq 20$. Similarly if $(\ell, p) = (3,2)$ then Table  7 contains analogous information.
\end{examplecounter}

\section{Simple modules for the Jones quotient algebras}

In this section we shall assume either that $k$ has characteristic $p \geq 0$ and that $q \in k$ is a root of unity with $\ord (q^2) = \ell >1$ or that $k$ has characteristic $p>0$ and $q = 1$. In these cases the Temperley-Lieb algebras $TL_n(q + q^{-1})$ have interesting semisimple quotients $Q_n(q + q^{-1})$ known as the Jones algebras, \cite{ILZ}. They are defined as the quotients by the Jones-Wenzl idempotent in $TL_{\ell - 1}(q+q^{-1})$, see loc.cit. For us the most convenient definition of these algebras are 
$$ Q_n(q + q^{-1}) = \End_{U_q}(V_q ^{\underline \otimes n}).$$
Here $\underline \otimes$ is the "reduced" tensor product on the fusion category, see Sections 3.3, 4.8 and 5.12.

Recall that $(D_n(m))_{m \in \Lambda_n} $ is the family of simple modules for $TL_n(q+q^{-1})$. We set 
$$d_n(m) = \begin{cases} {\dim D_n(m)  \text { if } m \in \Lambda_n \cap [0, \ell -2],}\\{0 \text { otherwise.}} \end{cases}$$
Here we replace $\ell$ by $p$ when $q = 1$.

 Then

\begin{prop}
The simple modules $L_n(m)$ with $m  \in \Lambda_n \cap [0, \ell -2]$ are modules for $Q_n(q + q^{-1})$ and are up to isomorphisms the list of simple modules for $Q_n(q + q^{-1})$. Their dimensions satisfy (and are determined by)
$$ d_n(m) = d_{n-1}(m-1) + d_{n-1}(m+1).$$
\end{prop}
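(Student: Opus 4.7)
The plan is to exploit the fact that the fusion category $\mathcal{F}_q$ is semisimple, so that the endomorphism algebra of any of its objects is (by Artin--Wedderburn) a direct product of matrix algebras whose simples are easy to index. Concretely, in $\mathcal{F}_q$ we have the decomposition
\[
V_q^{\underline{\otimes} n} \;=\; \bigoplus_{m \in \Lambda_n \cap [0,\ell-2]} T_q(m)^{\oplus \underline{b}_{n,m}},
\]
so Schur's lemma gives
\[
Q_n(q+q^{-1}) \;=\; \End_{\Uq}\bigl(V_q^{\underline{\otimes} n}\bigr) \;\cong\; \prod_{m \in \Lambda_n \cap [0,\ell-2]} M_{\underline{b}_{n,m}}(k).
\]
In particular $Q_n(q+q^{-1})$ is semisimple, and its simple modules are exactly $L_n(m) := \Hom_{\Uq}\bigl(T_q(m), V_q^{\underline{\otimes} n}\bigr)$ of dimension $\underline{b}_{n,m}$, one for each $m \in \Lambda_n \cap [0,\ell-2]$ with $\underline{b}_{n,m}>0$.

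Next I would match this with the Temperley--Lieb picture. The surjection $TL_n(q+q^{-1}) = \End_{\Uq}(V_q^{\otimes n}) \twoheadrightarrow Q_n(q+q^{-1})$ corresponds to the projection functor $\pr: \mathcal{T}_q \to \mathcal{F}_q$; on indecomposable summands it kills the negligible $T_q(m)$ (for $m \geq \ell-1$) and preserves the remaining ones. Thus for $m \in \Lambda_n \cap [0,\ell-2]$ we have
\[
\underline{b}_{n,m} \;=\; (V_q^{\underline{\otimes} n} : T_q(m)) \;=\; (V_q^{\otimes n} : T_q(m)) \;=\; \dim D_n^q(m),
\]
where the last equality is Proposition \ref{dim-formula}. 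This identifies $L_n(m)$ with the pullback of $D_n^q(m)$ through the quotient; a dimension count $\dim Q_n(q+q^{-1}) = \sum \underline{b}_{n,m}^2$ confirms that nothing else appears, so these exhaust the simple $Q_n$-modules.

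Finally, the recursion is already recorded: Corollary \ref{fusion numbers} gives $\underline{b}_{n,m} = \underline{b}_{n-1,m-1} + \underline{b}_{n-1,m+1}$ under the convention $\underline{b}_{n,-1} = \underline{b}_{n,\ell-1} = 0$, and translating to $d_n(m)$ (which is set to $0$ outside $\Lambda_n \cap [0,\ell-2]$) yields the stated formula verbatim.

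The only delicate point is the boundary at $m = \ell-2$, where naively one would expect a contribution from $T_q(\ell-1)$; but the fusion rule in Proposition \ref{fusion product} shows that $T_q(\ell-2) \underline{\otimes} V_q \simeq T_q(\ell-3)$ because $T_q(\ell-1)$ is negligible, and this is exactly what the convention $\underline{b}_{n,\ell-1} = 0$ encodes, so no separate case analysis is needed. The argument goes through uniformly for all three settings (characteristic zero, $q=1$ with $p>0$, and mixed), since in each case the fusion category is semisimple with the analogous description.
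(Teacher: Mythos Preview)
Your proposal is correct and follows essentially the same route as the paper's proof, just with considerably more detail filled in. The paper's argument is extremely terse: it simply says the first claim follows from the definition of $Q_n(q+q^{-1})$ as the quotient $\End_{U_q}(V_q^{\underline\otimes n})$ of $TL_n(q+q^{-1})$, and that the recursion is a special case of Corollaries \ref{fusion numbers}, 4.14 and 5.12. Your explicit Artin--Wedderburn decomposition of $Q_n$ via the semisimplicity of $\mathcal F_q$, the identification $\underline b_{n,m} = (V_q^{\otimes n}:T_q(m)) = \dim D_n^q(m)$ via Proposition \ref{dim-formula}, and the boundary discussion at $m=\ell-2$ are all exactly the content the paper is leaving implicit in that one-line appeal to ``the definition''.
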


\begin{proof} The first statement follows from the definition of $Q_n(q + q^{-1})$ as the quotient $ \End_{U_q}(V_q ^{\underline\otimes n})$ of $ \End_{U_q}(V_q ^{\otimes n}) = TL_n(q + q^{-1})$. The second statement is a special case of Corollaries 3.7,  4.14 and 5.12.
\end{proof}

\begin{remarkcounter}
In characteristic $0$ this proposition is wellknown, see \cite{ILZ}.
\end{remarkcounter}

\begin{examplecounter}

If $\ord(q^2) = 7$ or if $q=1$ and the characteristic of $k$ is $7$ then the dimensions of the simple modules for the Jones algebras $Q_n(q + q^{-1}), \; n \leq 16$  are found in the first $6$ columns in Table  1. If we replace $7$ by $3$ we see that the unique simple modules of the Jones algebra (for all $n$)  is $1$-dimensional (cf. Table 5). In fact, the Jones algebras are all trivial in this case.
\end{examplecounter}

\section{Some other endomorphism algebras}

We return for a little while to the case where $k$ is an arbitrary field of characteristic $p \geq 0$  and $q$ an arbitrary non-zero element in $k$. Let $T_q$ denote a tilting module for $U_q$ and set $E_q = \End_{U_q}(T)$. Then by the general theorem, \cite{AST1} Theorem 3.9 we know that $E_q$ is a cellular algebra. The cell modules for $E_q$  are $C_q(m) = \Hom_{U_q} (\Delta_q(m), T)$ and the simple modules for $E_q$ are the heads $L_q(m)$ of those $C_q(m)$ for which $(T_q:\Delta_q(m)) \neq 0$, see \cite{AST1} Theorem 4.11. The dimension of $C_q(m)$ is the Weyl factor multiplicity $(T_q: \Delta_q(m))$ and the dimension of $L_q(m)$ is the tilting multiplicity $(T_q: T_q(m))$, see \cite{AST1} Section 4. 

\subsection{Methods}
We have the following methods for computing the dimensions above.
\vskip .25 cm
{\bf Method 1}. 
Recall that the classes $([\Delta_q(m)])_{m \in \Z_{\geq 0}}$ form a basis of the Grothendieck group $\mathcal K$, see Section 3.1. Hence we can write $[T_q] = \sum_m a_m [\Delta_q(m)]$ for unique integers $a_m$. Then these numbers coincide with the Weyl multiplicities $(T_q:\Delta_q(m))$, i.e. we have $\dim C_q(m) = a_m$.  Moreover, the classes $([T_q(m)])_{m \in \Z_{\geq 0}}$ likewise constitute a basis for $\mathcal K$ and the results in Sections 3-5 tell us how to calculate the numbers $d_{i,j}$ determined by $[\Delta_q(j)] = \sum_i d_{j,i} [T_q(i)]$. Then
\begin{equation}
\dim L_q(m) = \sum_r a_r d_{r,m}.
\end{equation}

{\bf Method 2}.
We have a third basis of $\mathcal K$, namely the classes $([V_q^{\otimes r}])_{r \in Z_{\geq 0}}$. Hence there are also unique integers $b_r$ such that $[T_q] = \sum_r b_r[V_q^{\otimes r}]$. In Sections 3-5 we presented recipes for calculating the matrix $c_{r,m} = (V_q^{\otimes r} : T_q(m))$. With this notation we have
\begin{equation} \label{method 2}
\dim L_q(m) = \sum_r b_r c_{r,m}.
\end{equation}

\vskip .5 cm

Of course, the first steps in these two methods are equivalent. In fact, the numbers $a_m$ in Method 1 are related to the numbers $b_n$ in Method 2 by the equation 
$$ a_m =  \sum_n b_n (\binom{n}{(n-m)/2} - \binom{n}{(n-m)/2 -1})$$ 
with inverse
$$ b_n =  \sum_m a_m (-1)^{(m-n)/2}\binom{(m+n)/2}{n}$$
where we interprete $\binom{r}{j}$ as $0$ unless $r \in \Z$ and $j \in \Z_{\geq 0}$. Here the first equation comes from (\refeq{Weyl mult}) and the second from the (easily checked) inverse relation $ [\Delta_q(m)] = \sum_{j=0}^m (-1)^{(m-j)/2}\binom{(m+j)/2}{j} [V_q^{\otimes j}]$ in $\mathcal K$.
\vskip .5 cm

We can also consider the family of tensor powers $(T_q^{\otimes n})_{n \geq 0}$. Then we set $E_q^n = \End_{U_q} (T_q^{\otimes n})$ and denote by $C_q^n(m)$, respectively $L_q^n(m)$, the cell module, respectively the simple module for the cellular algebra  $E_q^n$. In this case, we have in addition to the above two methods an inductive procedure:
\vskip .5 cm
{\bf Method 3}.
Let $a_m$ denote the integers determined in Method 1 and recall the Clebsch-Gordan tensor formula
\begin{equation} \label{Clebsch-Gordan}
(\Delta_q(r) \otimes \Delta_q(m) : \Delta_q(s)) = \begin{cases} {1 \text { if } |m-r| \leq s \leq r+ m ,\text { and } s \equiv r+m \;  (\mo \; 2)}\\ {0 \text { otherwise.}}\end{cases}
\end{equation}
Then we get (for $n>0$) $(T_q^{\otimes n}: \Delta_q(m)) = \sum_s (T_q^{\otimes n-1} : \Delta_q(s)) (T_q \otimes \Delta_q(s) : \Delta_q(m)) = \sum_{s,r} (T_q^{\otimes n-1} : \Delta_q(s)) a_r (\Delta_q(r) \otimes \Delta_q(s) : \Delta_q(m)) = \sum_s \sum_{j = 0}^{\min\{s,m\}} (T_q^{\otimes n-1}: \Delta_q(s)) a_{|m-s|+ 2j}$. Equivalently, we have the inductive recipe for the dimensions of the cell modules
\begin{equation} \label{ind cell}
\dim C_q^n(m) = \sum_s \sum_{j=0} ^{\min\{s,m\}} \dim C_q^{n-1}(s) a_{m-s+ 2j}.
\end{equation}
Proceding as in Methods 1 we can finally use this information to calculate the dimension of the simple modules
\begin{equation}  \label{dim simples}
\dim L_q^n(m) = \sum_r \dim C_q^n(r) d_{r,m}.
\end{equation}

\subsection{The case of tilting Weyl modules}

Fix an $m \in \Z_{> 0}$ and assume throughout this section that $T_q(m) = \Delta_q(m)$. Note that this assumption is true for all $m$ if $q$ is not a root of unity or if $q= \pm 1$ and $p=0$. If $q \neq \pm 1$ is a root of unity we write as before $\ell = \ord(q^2)$. Then our assumption holds e.g. when $m < \ell$. It always hold when $m=1$ - the case treated in Sections 3-5. For $m < \ell$ this situation was also studied in \cite{ALZ}.

We shall consider the tensor powers of $\Delta_q(m)$. For each $n \in \Z_{\geq 0}$ we want to examine the $k$-algebra
$$ E_q^n (m) = \End_{U_q}(\Delta_q(m)^{\otimes n})$$
and its simple modules. Of course these modules depend on both $q$ and $p$.

If $q$ is not a root of unity then (as we have observed before) all finite dimensional modules for $U_q$ are semisimple. The same is true if $q = \pm 1$ and $p= 0$. In these cases the simple modules for $E_q^n(m)$ therefore coincide with the cell modules $C_q^n(m, s) = \Hom_{U_q}(\Delta_q(s), \Delta_q(m)^{\otimes n})$. Their dimensions are inductively given by the following general formula, see formula (4.8) in \cite{ALZ}. Note that $\dim C_q^0(m, s) = \delta_{s,0}$.

\begin{prop} \label{cell recurs}
If $n >0$ then we have
$\dim C_q^n(m, s) = \sum_{j=0}^{\min\{s,m\}} \dim C_q^{n-1}(m, |s-m|+2j)$ for all $s$.

\end{prop}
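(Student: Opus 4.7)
The plan is to exploit the identification of $\dim C_q^n(m,s)$ with a Weyl factor multiplicity and then reduce one tensor factor using the Clebsch--Gordan formula already recorded in (\ref{Clebsch-Gordan}). The statement of the proposition precedes the semisimplicity discussion, but in fact the identity only requires that $\Delta_q(m)$ be tilting, which is the hypothesis imposed at the start of the subsection. So I would first note that since $T_q(m)=\Delta_q(m)$, the tensor power $\Delta_q(m)^{\otimes n}$ is tilting; in particular it has a Weyl filtration, so the multiplicity $(\Delta_q(m)^{\otimes n}:\Delta_q(s))$ is well-defined, and standard cellular/tilting theory (see \cite{AST1}) gives
\[
\dim C_q^n(m,s) \;=\; (\Delta_q(m)^{\otimes n}:\Delta_q(s)).
\]

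Next I would split one factor off: write $\Delta_q(m)^{\otimes n} = \Delta_q(m)^{\otimes(n-1)} \otimes \Delta_q(m)$, pick a Weyl filtration of $\Delta_q(m)^{\otimes(n-1)}$, and tensor it with $\Delta_q(m)$. Because $\Delta_q(m)$ is tilting, tensoring preserves the Weyl filtration property, so the multiplicities are additive across the filtration:
\[
(\Delta_q(m)^{\otimes n}:\Delta_q(s)) \;=\; \sum_{s'\ge 0}(\Delta_q(m)^{\otimes (n-1)}:\Delta_q(s'))\cdot(\Delta_q(s')\otimes\Delta_q(m):\Delta_q(s)).
\]
Now I plug in the Clebsch--Gordan rule (\ref{Clebsch-Gordan}): the inner multiplicity is $1$ precisely when $|s'-m|\le s\le s'+m$ and $s\equiv s'+m\pmod 2$, and $0$ otherwise.

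Finally I would reindex the sum over $s'$. The constraints $|s'-m|\le s\le s'+m$ with the correct parity describe exactly the $s'$ values $s' \in \{\,|s-m|,\;|s-m|+2,\;\ldots,\;s+m\,\}$, which are $\min\{s,m\}+1$ in number; setting $s'=|s-m|+2j$ for $j=0,1,\ldots,\min\{s,m\}$ produces the claimed formula
\[
\dim C_q^n(m,s) \;=\; \sum_{j=0}^{\min\{s,m\}}\dim C_q^{n-1}(m,\,|s-m|+2j).
\]
The only potentially delicate point is confirming that the range of admissible $s'$ reindexes cleanly as $|s-m|+2j$ with $j\le\min\{s,m\}$, which is an elementary parity check. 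The rest is a direct assembly of results already established earlier in the excerpt.
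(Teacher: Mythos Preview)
Your argument is correct and is essentially the same as the paper's: the paper's proof is the one-line remark ``this is formula (\ref{ind cell}) applied to the case $T_q=\Delta_q(m)$'', and formula (\ref{ind cell}) was itself derived in Method~3 by exactly the steps you carry out (split off one tensor factor, use additivity of Weyl multiplicities, apply Clebsch--Gordan (\ref{Clebsch-Gordan}), reindex). You have simply unpacked that derivation directly in the special case $a_r=\delta_{r,m}$ rather than quoting the general formula.
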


\begin{proof}
This is formula (\refeq{ind cell}) applied to the case $T_q = \Delta_q(m)$.
\end{proof}

\begin{prop} \label{ss}
Suppose $m < \ell$. The algebra $E_q^n(m)$ is semisimple if and only if $nm < \ell$. Hence the cell modules $C_q^n(m,s)$ are simple for all $s$ if and only if $mn < \ell$.
\end{prop}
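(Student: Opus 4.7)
The plan is to reduce semisimplicity of $E_q^n(m) = \End_{U_q}(\Delta_q(m)^{\otimes n})$ to semisimplicity of the tilting module $T = \Delta_q(m)^{\otimes n}$ itself. By the cellular theory from \cite{AST1}, $\End_{U_q}(T)$ is semisimple if and only if every indecomposable tilting summand $T_q(r)$ occurring in the decomposition of $T$ is a simple $U_q$-module, i.e.\ if and only if $T$ is semisimple. So the question becomes: which $T_q(r)$ appear in $T$, and when are they all simple?

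For sufficiency, suppose $nm < \ell$. Iterated Clebsch--Gordan shows every Weyl factor $\Delta_q(s)$ appearing in $T$ satisfies $s \leq nm < \ell$. For weights $s < \ell$ the linkage principle places each weight alone in its block, so $\Delta_q(s) = T_q(s) = L_q(s)$ is simple and there are no non-trivial extensions among distinct such modules. Hence $T$ is a direct sum of simples and $E_q^n(m)$ is semisimple.

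For necessity, suppose $nm \geq \ell$ (so $n \geq 2$ since $m < \ell$). The top Weyl factor $\Delta_q(nm)$ occurs in $T$ with multiplicity one, forcing $T_q(nm)$ to be a summand of $T$. If $nm \not\equiv -1 \pmod \ell$, Proposition \ref{tiltings}(3) gives $T_q(nm)$ a non-trivial Weyl filtration with two distinct Weyl factors, hence two distinct composition factors, so $T_q(nm)$ is not simple. If $nm = k\ell - 1$ with $k \geq 3$, then $T_q(nm) = \Delta_q(nm)$, but the linkage principle supplies lower composition factors such as $L_q((k-2)\ell - 1)$, so $T_q(nm)$ is again non-simple. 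The main obstacle is the borderline case $nm = 2\ell - 1$, where $T_q(nm) = \Delta_q(nm) = L_q(nm)$ really is simple; here I would look one step lower. A direct weight-count gives $(T : \Delta_q(nm-2)) = n-1 \geq 1$, and since the only larger tilting summand $T_q(nm)$ is a single Weyl module, it contributes no $\Delta_q(nm-2)$ factor, whence $(T : T_q(nm-2)) = n - 1 \geq 1$ as well. For $\ell \geq 3$ the weight $nm - 2 = 2\ell - 3$ lies in $[\ell, 2\ell - 2]$ with $nm - 2 \not\equiv -1 \pmod \ell$, so Proposition \ref{tiltings}(3) forces $T_q(nm-2)$ to have two distinct Weyl factors and hence to be non-simple, so $T$ is not semisimple.
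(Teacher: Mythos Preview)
Your overall strategy matches the paper's: reduce to semisimplicity of the tilting module $T=\Delta_q(m)^{\otimes n}$, handle $nm<\ell$ by observing all weights are below $\ell$, and for $nm\geq\ell$ argue that some indecomposable summand of $T$ fails to be simple. The sufficiency direction and the case $nm\not\equiv -1\pmod\ell$ are fine.

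The gap is in your treatment of $nm=k\ell-1$ with $k\geq 3$. You claim that ``the linkage principle supplies lower composition factors such as $L_q((k-2)\ell-1)$'', but this is false in characteristic~$0$: for every $k\geq 1$ the Weyl module $\Delta_q(k\ell-1)$ is \emph{simple} (it equals $L_q(k\ell-1)$, of dimension $k\ell$). The weights $k\ell-1$ lie on walls for the affine Weyl group action, and linkage does not force any smaller composition factor here. The same failure occurs in characteristic $p>0$ whenever $k\leq p$, since then $\Delta(k-1)$ is simple and $\Delta_q^p(k\ell-1)\simeq\Delta(k-1)^{[q]}\otimes St_q^p$ is simple too. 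So for these $k$ the summand $T_q(nm)$ really is simple and gives you nothing.

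The fix is immediate and is exactly what the paper does: drop the case split at $k=2$ versus $k\geq 3$ and use your $T_q(nm-2)$ argument uniformly for \emph{all} $nm\equiv -1\pmod\ell$. Your computation $(T:T_q(nm-2))=n-1\geq 1$ is valid in general, and for $\ell>2$ one has $nm-2=(k-1)\ell+(\ell-3)$ with $0\leq\ell-3\leq\ell-2$ and $k-1\geq 1$, so Proposition~\ref{tiltings}(3) (and its analogues in Sections~4--5) makes $T_q(nm-2)$ non-simple.
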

\begin{proof}
We know that $E_q^n(m)$ is semisimple if and only if the module $\Delta_q(m)^{\otimes n}$ is semisimple. If $mn < \ell$ then all weights of $\Delta_q(m)^{\otimes n}$ are less than  $\ell$ and in this case $\Delta_q(m)^{\otimes n}$ is clearly semisimple. On the other hand, $T_q(mn)$ always occurs as a summand of $\Delta_q(m)^{\otimes n}$ and if $mn \geq \ell$ it follows from our descriptions in Sections $3$ and $5$ that $T_q(mn)$ is non-semisimple  unless possibly when $mn \equiv -1 \; (\mo \; \ell)$. However, also $T_q(mn-2)$ occurs as a summand of $\Delta_q(m)^{\otimes n}$ (in fact with multiplicity $n-1$ and since $nm \geq \ell$ we have $n>1$) and if $mn \equiv -1 \; (\mo \; \ell)$ then $T_q(mn -2)$ is non-semisimple (here we use $\ell > 2$).
\end{proof}

Once we have calculated the dimensions of the cell modules equation (\refeq{dim simples}) tells us that the dimensions of the simple modules $L_q^n(m, s)$ for $E_q^n(m)$ are given  by the formula
\begin{equation}
\dim L_q^n(m, s) =  \sum_r \dim C_q^n(m, r) d_{r, s}
\end{equation}

Observe that the matrix $(d_{r,s})$ is determined in Sections 3-5. We illustrate by giving details in the simplest case, namely the characteristic zero root of unity case (cf. Section 3).

\begin{examplecounter}
Let $p = 0$ and suppose either that $m < \ell$ or that $m \equiv -1 \; (\mo \; \ell)$ (so that our assumption $\Delta_q(m) = T_q(m)$ is satisfied).  Then we have
\begin{enumerate}
\item If $s \equiv -1 \; (\mo \; \ell)$ then  $L_q^n(m, s) =   C_q^n(m, s)$,
\item If $s = s_0 + s_1 \ell$ with $0 \leq s_0 \leq \ell -2$ then 
$$\dim L_q^n(m, s) =  \sum_{j\geq 0}  \dim C_q^n(m, s + 2j\ell) -  \sum_{j> 0} \dim C_q^n(m, s' + 2j\ell),$$
where $s' = s_1\ell - s_0 -2$.
\end{enumerate}
\end{examplecounter}

In the following section we shall work out further this example in the case where $m=2$.

\section{BMW-algebras}

In this section we shall examine closely the simple modules for the endomorphism rings $E_q^n(2)$, i.e. in the set-up of the previous section we take $m=2$. The three dimensional Weyl module $\Delta_q(2)$ is tilting unless we are in a field of characteristic $2 $ and $q = 1$ or $q$ is a $4$'th root of unity (in any field). So we shall exclude these cases together with all cases where $q$ is either a non-root of unity in an arbitrary field or $q= \pm 1$ in a field of characteristic $0$ (in the latter cases the algebras $E_q^n(2)$ are semisimple for all $n$ and the simple modules coincide with the cell modules). In other words, we look in this section at the cases where $q =\pm 1$ in a field of characteristic $p > 2$ together with the cases where $q$ is a root of unity with $\ell = \ord(q^2) > 2$ in an arbitrary field. 

It is wellknown that the algebras $E_q^n(2)$ are closely related to (a particular specialization, see below) of the family of BMW-algebras. We shall conclude this section by giving the dimensions for a class of simple modules for BMW-algebras  which  we can extract from our results. It turns out that for this we need  in addition to the above assumptions also that $q$ is not a $6$'th root of $1$.

\subsection{Simple modules for $E_q^n(2)$}

We set $W= \Delta_q(2)$ and we shall write $C_W^n(s)$ and  $L_W^n(s)$ for the cell module and the simple module for $E_q^n(2)$ labeled by $s$. Note that the weights of $W$ are $-2, 0, 2$. In particular the weights are all even. This means that for any $n$ the relevant $s$ to consider are also all even. 

In this case Proposition \ref{cell recurs} states (for $n, s >0 $ with $s$ even)
\begin{equation}\label{recurs cell}
\dim C_W^n(0) = \dim C_w^{n-1} (2) \text { and }  \dim C_W^n(s) = \dim C_W^{n-1} (s-2) +  \dim C_W^{n-1} (s) +  \dim C_W^{n-1} (s+2).
\end{equation}
Alternatively, we could use Method 2 in Section 8.1: We note that in the Grothendieck group we have $[W] = [V^{\otimes 2}] - [k]$. Hence 
\begin{equation} \label{W in terms of V}
[W^{\otimes n}] = \sum_{i=0}^n (-1)^i \binom{n}{i} [V^{\otimes 2(n-i)}].
\end{equation}
This gives via equation (\refeq{Weyl mult})
the following closed formula for $(W^{\otimes n}: \Delta_q(s)) = \dim C_W^n(s)$.
\begin{equation} \label{dim cell}
\dim C_W^n(s) = \sum_{i=0}^n (-1)^i \binom{n}{i} (\binom{2(n-i)}{n-i-s/2} -\binom{2(n-i)}{n-i-1-s/2}).
\end{equation}

Using either of these equations one may easily calculate the dimensions of the cell modules for $E_q^n(2)$. In Table 8 we have (in the $n$'th row) recorded those numbers for $n \leq 10$. 
\vskip .5 cm 

{Table 8. Dimensions of cell modules for $E_q^n(2)$ for $n= 0, 1, \cdots , 10$.} 
\vskip .5 cm
\noindent
\begin{tabular}{ r| c  c c c c c c c c c c c c c c c c}
 
   & 0 & 2 & 4 & 6 & 8 & 10 & 12 & 14 & 16& 18 & 20  \\  \hline 
  0 & 1 &  \\ 
  1 &  &  1 \\ 
  2 & 1& 1 & 1   \\
  3 & 1 &3 & 2 & 1\\
4&3 &6 & 6& 3 & 1 & \\
5 &6 &15& 15&10 &4  &1  \\
6&15 &36 &40 &29&15&5&1 \\
7& 36&91&105 &84&49&21&6 & 1& \\
8&91&232&280&238&154&76&28&7&1\\
9 &232&603&750&672&468&258&111&36&8&1\\
10 &603&1585&2025&1890&1398&837&405&155&45&9&1\\

\end{tabular}
\vskip .5 cm

We can now find the dimensions of the simple modules by combining equation (\refeq{dim cell}) (or (\refeq{recurs cell})) and (\refeq{method 2}).  Alternatively, we can apply the results from Sections 3, respectively 4 and 5) on the decompositions of the tensor powers of $V_q$ (respectively $V$ and $V_q^p$) in combination with (\refeq{method 2}) to obtain
\begin{equation} \label{simples via TL}
\dim L_W^n(s) =  \sum_{i=0}^n (-1)^i \binom{n}{i} c_{2(n-i),s}.
\end{equation}
Here the matrix $(c_{r,s})$ is the matrix containing the tilting multiplicities in the tensor powers of $V$ (respectively, $V$, $V_q^p$), i.e. the dimensions of the simple modules for the Temperley-Lieb algebras. The algorithms in Sections 3.2, 4.5-7, and 5.4-5 show how to calculate this matrix.

\begin{examplecounter}
Again we illustrate the above formulas by the easiest case: $q$ a root of unity in a characteristic $0$ field. In Table 9 we have recorded the dimensions of the simple modules for $E_q^n(2)$ when $\ell = 5$ for $n \leq 10$. Note that by Proposition \ref{ss} only the first 3 rows are identical to the corresponding rows in Table 8. The reader may check these numbers via (\refeq{simples via TL}) and Table 2 (but will need to compute the next $4$ rows in Table 2 in order to get the last two rows in Table 9).
 
\vfill \eject

Table 9. Dimensions of simple modules for $E_q^n(2)$ for $\ell = 5$ and $p = 0$.
\vskip .5 cm
\noindent
\begin{tabular}{ r| c  c c c c c c c c c c c c c c c c}
  
   & 0 & 2 & 4 & 6 & 8 & 10 & 12 & 14 & 16& 18 & 20  \\  \hline 
  
  0 & 1 &  \\ 
  1 &  &  1 \\ 
  2 & 1& 1 & 1   \\
  3 & 1 &2 & 2 & 1\\
4&2 &3 & 6& 3 & 1 & \\
5 &3 &5& 15&10 &3  &1  \\
6&5 &8 &40 &28&10&5&1 \\
7& 8&13&105 &78&28&21&6 & 1& \\
8&13&21&280&211&78&76&28&7&1\\
9 &21&34&750&569&211&257&103&36&8&1\\
10 &34&55&2025&1530&569&829&360&155&45&8&1\\

\end{tabular}
\end{examplecounter}

\vskip .5cm
We shall apply the above results to get information about the simple modules for the family of $BMW$-algebras. We start with defining the version of these algebras which we shall work with (there are several possible specializations of the $3$ parameter BMW-algebras, see e.g. \cite{TA}, \cite{LZ1}, \cite{RS1}). We start out working over the algebra of Laurent polynomials $A =\Z[v, v^{-1}]$ over $\Z$.

\begin{defn}
Let $r \in \Z_{\geq 0}$. Then $BMW_r(A)$ is the $A$-algebra with generators $\{g_i^{\pm 1}, e_i | i=1, 2, \cdots , r-1\}$ and relations
\begin{enumerate}
\item  $g_ig_j = g_jg_i$ if $|i-j| > 1$,
\item $g_ig_{i+1}g_i = g_{i+1}g_ig_{i+1}$ for $1 \leq i \leq r-1$,
\item $g_i - g_i^{-1} = (v^2 - v^{-2})(1-e_i)$  for $1 \leq i \leq r-1$,
\item $g_ie_i = e_ig_i =v^{-4}e_i$  for $1 \leq i \leq r-1$,
\item $e_ig_{i+1}^{\pm 1}e_i = v^{\pm 4}e_i$  for $1 \leq i \leq r-2$,
\item $e_ig_{i-1}^{\pm 1}e_i = v^{\pm 4}e_i$  for $2 \leq i \leq r-1$.
\end{enumerate}
\end{defn}

Let $k$ be a field of characteristic different from $2$ and suppose $q \in k$ is a root of unity with $\ell = \ord(q^2) > 2$ (i.e. the assumptions in the first paragraph of this section are satisfied). Then $k$ is an $A$-algebra via the map $v \mapsto q$ and we shall call $BMW_r(q) = BMW_r(A) \otimes_A k$ the BMW-algebra over $k$. (This is also a specialization of the Birman-Wenzl algebra studied by Rui and Si in \cite{RS1}, \cite{RS2}.)

Assuming that $\ell \neq 6$ it is proved (by a brute force argument) in \cite{TA} Section 4, that if $B_r$ is the braid group on $r-1$ strands then the  homomorphism from the group algebra $k[B_r]$ of $B_r$ into $E_q^r(2)$, which takes the $i$'th generator of $B_r$ into the $R$-matrix on $W^{\otimes r}$ operating on the $i$'th and $i+1$'st factor, is surjective. (If $k$ is replaced by the fraction field of $A$ then the corresponding statement was proved in \cite{LZ1} for all $m$.) This homomorphism factors through $BMW_r(q)$, i.e. we have
$$ E_q^r(2) \simeq BMW_r(q)/N,$$
where the kernel $N$ is known to be generated by a specified element  $\tilde \Phi_q \in BMW_4(q)$, see \cite{LZ2}, \cite{TA}. Hence the above results prove

\begin{cor} Assume $\ell  \neq 6$. Then  
the simple modules for $BMW_r(q)$ which are killed by $\tilde \Phi_q$ are $\{L_W^r(2m) | 0 \leq m \leq r \}$. Their dimensions are given by (\refeq{simples via TL}).
\end{cor}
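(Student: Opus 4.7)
The plan is to reduce the corollary to the identification of simple modules for $E_q^r(2)$, which has already been carried out in the preceding section. The starting point is the isomorphism $E_q^r(2) \simeq BMW_r(q)/N$ quoted just before the corollary, where $N$ is the two-sided ideal of $BMW_r(q)$ generated by $\tilde{\Phi}_q$. First I would invoke the standard correspondence: for any algebra $A$ with two-sided ideal $I$, the simple $A/I$-modules are exactly the simple $A$-modules on which $I$ acts as zero. Applied to $A = BMW_r(q)$ and $I = N$, and using the fact that $N$ is generated (as a two-sided ideal) by $\tilde{\Phi}_q$, this says that the simple $BMW_r(q)$-modules killed by $\tilde{\Phi}_q$ are, up to isomorphism, exactly the simple modules of $E_q^r(2)$ pulled back along the quotient map.

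Next I would identify these simple modules with the $L_W^r(2m)$. Since $W = \Delta_q(2)$ has weights $\{-2,0,2\}$, every weight of $W^{\otimes r}$ lies in $2\Z\cap[-2r,2r]$, so the only cell modules $C_W^r(s)$ of $E_q^r(2)$ that are non-zero are those with $s = 2m$, $0\le m \le r$. By the cellular theory of $\End_{U_q}(T)$ for a tilting module $T$ (the framework of \cite{AST1} recalled in Section 8), the simple $E_q^r(2)$-modules are precisely the non-zero heads $L_W^r(2m) = \mathrm{head}\, C_W^r(2m)$ for $m$ in this range. Writing the set of simples as $\{L_W^r(2m)\mid 0\le m \le r\}$ (with the understanding that some entries may be zero, corresponding to $(W^{\otimes r}:T_q(2m))=0$) then exhausts the simple modules of $BMW_r(q)$ annihilated by $\tilde{\Phi}_q$.

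Finally, for the dimensions, I would just quote the formula derived at the end of Section 8, namely that $\dim L_W^r(2m)$ is given by (\ref{simples via TL}). This rests on expressing $[W^{\otimes r}]$ in the Grothendieck group in terms of $[V_q^{\otimes 2(r-i)}]$ via the identity $[W]=[V_q^{\otimes 2}]-[k]$ (Method~2 of Section 8.1), followed by applying the tilting-multiplicity matrix $(c_{n,s})=(V_q^{\otimes n}:T_q(s))$ whose entries are precisely the simple dimensions for the Temperley–Lieb algebras computed by the algorithms in Sections 3--5.

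The only substantive obstacle is the verification that the natural map $k[B_r]\to E_q^r(2)$ factors through $BMW_r(q)$ with kernel generated by $\tilde{\Phi}_q$, but this is exactly the content of the results of \cite{TA} and \cite{LZ2} cited in the paragraph preceding the corollary, and the hypothesis $\ell\neq 6$ is needed precisely there to guarantee surjectivity of the braid-group map. Assuming those inputs, everything above is bookkeeping and the corollary is immediate.
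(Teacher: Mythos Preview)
Your proposal is correct and follows exactly the approach the paper intends: the paper offers no separate proof for this corollary, simply writing ``Hence the above results prove'' after recording the isomorphism $E_q^r(2)\simeq BMW_r(q)/N$ with $N=\langle\tilde\Phi_q\rangle$, and your proposal spells out precisely this deduction via the standard bijection between simple $A/I$-modules and simple $A$-modules annihilated by $I$. Your remarks on the parity constraint on $s$, the cellular identification of the simples, and the role of the hypothesis $\ell\neq 6$ (needed for the surjectivity result from \cite{TA}) are all accurate and match the paper's reasoning.
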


\begin{examplecounter}
Consider the case where $\ell = 5$. When $p = 0$ Table 9 gives the dimensions of the simple modules $L_W^r(2m)$ for $m \leq r$ and $r = 0, 1, \cdots , 10$. 

If instead we take $p=3$ then Lemma \ref{small m mixed} shows that there are no changes as long as $r \leq 9$. However, as the reader can easily check from the results in Section 5, the numbers in the last row (the $r = 10$ case) change to $32, 55, 2025, 1530, 571, 828, 360, 155, 45, 8, 1$.  Note that only the simple $BMW_{10}(q)$-modules labeled by $0, 8$ and $10$ have changed dimension when we pass  from characteristic $0$ to characteristic $3$.

Finally, if we choose $p=2$ the dimensions of the simple $BMW_r(q)$ are the same as in characteristic $0$ for $r < 7$. In Table 10 we have recorded the dimensions when $ r = 7, 8, 9, 10.$
\vskip .5 cm
Table 10. Dimensions of simple modules for $E_q^n(2)$ for $\ell = 5$ and $p = 2$.
\vskip .5 cm
\noindent
\begin{tabular}{ r| c  c c c c c c c c c c c c c c c c}
  
   & 0 & 2 & 4 & 6 & 8 & 10 & 12 & 14 & 16& 18 & 20  \\  \hline

7& 8&13&104 &78&28&21&6 & 1& \\
8&13&21&273&210&78&76&27&7&1\\
9 &21&34&714&561&210&257&103&36&8&1\\
10 &34&55&1870&1485&561&829&360&155&45&8&1\\

\end{tabular}

\end{examplecounter}

\begin{remarkcounter}
To get all simple modules for $BMW_r(q)$ when $r>3$ one could explore the tilting theory for quantum groups for the symplectic and orthogonal Lie algebras, cf. Section 7 in \cite{AST2}.
\end{remarkcounter}

\vskip 2 cm

\begin{thebibliography}{}

\bibitem{A80a} H.H.~Andersen, The strong linkage principle, J. Reine Ang. Math. 315 (1980), 53-59.
\bibitem{A80b} H.H.~Andersen, The Frobenius morphism on the cohomology of homogeneous vector bundles on G/B, Annals Math. 112 (1980), 113-121. 
\bibitem{A92} H. H. Andersen, Tensor products of quantized tilting modules, Comm. Math. Phys. 149 (1992), 149 – 159.
\bibitem{A17} H. H.~Andersen, The Steinberg linkage class for a reductive algebraic group, online available on arXiv:1706.00590.
\bibitem{AW} H. H. Andersen and Wen Kexin, Representations of quantum algebras. The mixed case, J. Reine Angew. Math. 427 (1992), 35 – 50.
\bibitem{ALZ} H. H. Andersen, G. Lehrer and R. Zhang, Cellularity of certain quantum endomorphism algebras, Pacific Journal of Mathematics 279 (2015), 11–35.
\bibitem{AP} H. H. Andersen and J. Paradowski, Fusion categories arising from semisimple Lie algebras,Comm. Math. Phys. 169 (1995), 563 – 588.
\bibitem{APW} H.H.~Andersen, P. Polo and Wen Kexin, Representations of quantum algebras, Invent. Math. 104 (1991), 1 - 59.
\bibitem{AS} H. H. Andersen and C.~Stroppel,  Fusion rings for quantum groups, Algebras and Representation Theory. 17, 6 (2014) 1869 - 1888.
\bibitem{AST1} H. H. ~Andersen, C. Stroppel and D.~Tubbenhauer, Cellular structures using Uq-tilting modules, Pacific Journal of Mathematics (to appear), online available on arXiv:1503.00224.
\bibitem{AST2} H. H. ~Andersen, C. Stroppel and D.~Tubbenhauer, Semisimplicity of Hecke and (walled) Brauer algebras,  J. Aust. Math. Soc. 103, 1-44 (2017).
\bibitem{AT} H.H.~Andersen and D.~Tubbenhauer, Diagram categories for $U_q$-tilting modules at roots of unity, Transform. Groups 22 (2017), n o. 1, 29-89.
\bibitem{TA} T.S. Andersen, Endomorphism Algebras of Tensor Powers of Modules for Quantum groups, PhD Thesis, Aarhus University 2012.

\bibitem{Do} S. Donkin, On tilting modules for algebraic groups, Math. Z. 212 (1993), 39–60.
\bibitem{E} K. Erdmann, Tensor Products and Dimensions of Simple Modulesfor Symmetric Groups, Manuscripta math. 88, 357- 386 (1995).
\bibitem{GW} F. M. Goodman and H. Wenzl, The Temperley-Lieb algebra at roots of unity, Pacific Journal of Mathematics 161 (1993), 307-334.
\bibitem{GL} J. Graham and G. Lehrer, Cellular algebras, Inventiones Math. 123 (1996), 1–34.


\bibitem{ILZ}K. Iohara, G.I. Lehrer, R.B. Zhang, Temperley-Lieb at roots of unity, a fusion category and the Jones quotient, online available on arXiv: 1707.01196.
\bibitem{LZ1} G. I. Lehrer and R. B. Zhang, A Temperley-Lieb analogue for the BMW algebra, Representation theory of algebraic groups and quantum groups, Progr. Math., 284, 155–190, Birkhäuser/Springer, New York, 2010.
\bibitem{LZ2} G.I. Lehrer and R. B. Zhang, On endomorphisms of quantum tensor space, Lett. Math. Phys. 86 (2008), no. 2-3, 209–227.

\bibitem{L} G. Lusztig, Introduction to Quantum Groups, Modern Birkhäuser Classics, Birkhäuser/Springer (2010).
\bibitem{M} P. Martin, Potts models and related problems in statistical mechanics, Series on Advances in Statistical Mechanics 5, World Scientific Publishing Co., Inc., Teaneck, NJ (1991).
\bibitem{RS1} H. Rui and M. Si, Gram determinants and semisimplicity criteria for Birman-Wenzl algebras, Journal für die reine und angewandte Mathematik 631, 153-180 (2009).
\bibitem{RS2} H. Rui and M. Si, Rui, H., Blocks of Birman–Murakami–Wenzl Algebras, International Mathematics Research Notices, Vol. 2011, No. 2, pp. 452–486 (2011).
\bibitem{W} B.W.~Westbury, The representation theory of the Temperley-Lieb algebras, Math. Z. 219-4 (1995), 539-566.
\end{thebibliography}
\end{document}